\documentclass[12pt]{amsart}
\usepackage{amsmath,amssymb,hyperref,geometry,graphicx,ytableau,fp,mathdots,amsthm,tikz,tikz-3dplot}
\usetikzlibrary{cd}
\usepackage{pgfplots}
\usepackage{float}
\usepgfplotslibrary{fillbetween}
\usepackage[reftex]{theoremref}
\usetikzlibrary{arrows}
\usepackage{subfiles}

\geometry{margin=1in}
\setlength{\parskip}{3bp}
\setlength{\parindent}{1em}

\newtheorem{theorem}[equation]{Theorem}
\newtheorem{lemma}[equation]{Lemma}
\newtheorem{corollary}[equation]{Corollary}
\newtheorem{proposition}[equation]{Proposition}
\theoremstyle{definition}

\newtheorem{remark}[equation]{Remark}

\setcounter{section}{0}
\setcounter{tocdepth}{1}
\numberwithin{equation}{section}

%%%%%Custom commands
%% Nice Letters
% Blackboard Bold
\newcommand{\R}{\mathbb{R}}
\newcommand{\Z}{\mathbb{Z}}

% Fancy Math Cal Letters
\newcommand{\I}{\mathcal{I}}
\newcommand{\J}{\mathcal{J}}
\newcommand{\cL}{\mathcal{L}}
\newcommand{\cF}{\mathcal{F}}
% The hipster letters
\newcommand{\sM}{\mathsf{M}}

%% Math Operators
\newcommand{\Cub}{\operatorname{Cub}}
\newcommand{\oCub}{\overline{\Cub}}
\newcommand{\Vol}{\operatorname{Vol}}

\newcommand{\MVol}{\operatorname{MVol}}
\newcommand{\pr}{\mathrm{pr}}
%% Math Symbols
\newcommand{\cl}{\mathrm{cl}}
\newcommand{\rk}{\mathrm{rk}}
\newcommand{\Inn}{\mathrm{Inn}}

\newcommand{\conv}{\mathrm{conv}}
\newcommand{\F}{\mathrm{F}}
\newcommand{\N}{\mathrm{N}} 
\newcommand{\Hess}{\mathrm{Hess}}%\mathrm forces the entry to appear in non-italicized text
%%%%%

\newcommand*{\defeq }{\mathrel{\vcenter{\baselineskip0.5ex \lineskiplimit0pt
                     \hbox{\scriptsize.}\hbox{\scriptsize.}}}%
                     =}

\begin{document}
\title{Mixed volumes of normal complexes}
\author[L.~Nowak]{Lauren Nowak}
\address{Department of Mathematics, University of Washington}
\email{lnowak@uw.edu}
\author[P.~O'Melveny]{Patrick O'Melveny}
\address{Department of Mathematics, San Francisco State University}
\email{pomelveny@mail.sfsu.edu}
\author[D.~Ross]{Dustin Ross}
\address{Department of Mathematics, San Francisco State University}
\email{rossd@sfsu.edu}

\begin{abstract}
Normal complexes are orthogonal truncations of polyhedral fans. In this paper, we develop the study of mixed volumes for normal complexes. Our main result is a sufficiency condition that ensures when the mixed volumes of normal complexes associated to a given fan satisfy the Alexandrov--Fenchel inequalities. By specializing to Bergman fans of matroids, we give a new proof of the Heron--Rota--Welsh Conjecture as a consequence of the Alexandrov--Fenchel inequalities for normal complexes.
\end{abstract}

\maketitle
\vspace{-20bp}

\section{Introduction}

The Alexandrov--Fenchel inequalities lie at the heart of convex geometry, asserting that, for any convex bodies $P_{\heartsuit},P_{\diamondsuit},P_3\dots,P_d\in\R^d$, their mixed volumes satisfy
\[
\MVol(P_{\heartsuit},P_{\diamondsuit},P_3,\dots,P_d)^2\geq \MVol(P_{\heartsuit},P_{\heartsuit},P_3,\dots,P_d)\MVol(P_{\diamondsuit},P_{\diamondsuit},P_3,\dots,P_d).
\]
This paper is centered around developing an analogue of the Alexandrov--Fenchel inequalities in a decidedly nonconvex setting. The geometric objects of interest to us are normal complexes, which were recently introduced by A. Nathanson and the third author \cite{NR}. Given a pure simplicial fan $\Sigma$, a normal complex associated to $\Sigma$ is, roughly speaking, a polyhedral complex obtained by truncating each cone of $\Sigma$ with half-spaces perpendicular to the rays of $\Sigma$. The choice of where to place the truncating half-spaces results in a family of normal complexes associated to each fan $\Sigma$, and the question that motivates this work is: \emph{for a given fan $\Sigma$, do the mixed volumes of the associated normal complexes satisfy the Alexandrov--Fenchel inequalities?} Our main result (Theorem~\ref{thm:reduce}) describes two readily verifiable conditions on $\Sigma$ that guarantee an affirmative answer to this question.

One of the motivations for studying mixed volumes of normal complexes is that, in the special setting of tropical fans, they correspond to mixed degrees of divisors in associated Chow rings. Thus, Alexandrov--Fenchel inequalities for normal complexes lead to nontrivial numerical inequalities in these Chow rings. A class of tropical fans that have garnered a great deal of attention in recent years are Bergman fans of matroids, and one application of our main result (Theorem~\ref{thm:bergman}) is that normal complexes associated to Bergman fans of matroids satisfy the Alexandrov--Fenchel inequalities. Translating these inequalities back to matroid Chow rings, we obtain a volume-theoretic proof of the log-concavity of characteristic polynomials of matroids, a result that was conjectured by Heron, Rota, and Welsh \cite{Rota, Heron, Welsh} and first proved by Adiprasito, Huh, and Katz \cite{AHK}.

\subsection{Overview of the paper}

We begin in Section 2 by briefly recalling the construction of normal complexes and their volumes. Normal complexes, denoted $C_{\Sigma,*}(z)$, depend on a marked simplicial $d$-fan $\Sigma$ in a vector space $N_\R$ with an inner product $*\in\Inn(N_\R)$, as well as a choice of pseudocubical truncating values $z\in\oCub(\Sigma,*)\subseteq\R^{\Sigma(1)}$. The volume of $C_{\Sigma,*}(z)$, denoted $\Vol_{\Sigma,\omega,*}(z)$, where $\omega$ is a weight function on the top-dimensional cones of $\Sigma$, is defined as the weighted sum of the volumes of the maximal polytopes in $C_{\Sigma,*}(z)$. We recall the main result of \cite{NR}, which asserts that, if $(\Sigma,\omega)$ is a tropical fan, then
\begin{equation}\label{intro:vol=deg}
\Vol_{\Sigma,\omega,*}(z)=\deg_{\Sigma,\omega}(D(z)^d)\;\;\;\text{ where }\;\;\;D(z)=\sum_{\rho\in\Sigma(1)}z_\rho X_\rho \in A^1(\Sigma).
\end{equation}

In Section 3, we introduce mixed volumes of normal complexes $C_{\Sigma,*}(z_1),\dots,C_{\Sigma,*}(z_d)$, denoted $\MVol_{\Sigma,\omega,*}(z_1,\dots,z_d)$, which are weighted sums of mixed volumes of maximal polytopes. Analogous to mixed volumes in convex geometry, we show that mixed volumes of normal complexes are characterized by being symmetric, multilinear, and normalized by volume (Proposition~\ref{prop:mvolchar}). Furthermore, we prove that mixed volumes are nonnegative on the pseudocubical cone $\oCub(\Sigma,*)$ and positive on the cubical cone $\Cub(\Sigma,*)$ (Proposition~\ref{prop:positive}). For all tropical fans $(\Sigma,\omega)$, 
we leverage \eqref{intro:vol=deg} to show (Theorem~\ref{thm:mvol=mdeg}) that
\begin{equation}\label{intro:mvol=mdeg}
\MVol_{\Sigma,\omega,*}(z_1,\dots,z_d)=\deg_{\Sigma,\omega}(D(z_1)\cdots D(z_d)).
\end{equation}

In Section 4, we develop the face structure of normal complexes, closely paralleling the classical face structure of polytopes. In particular, the faces of a normal complex $C_{\Sigma,*}(z)$ are indexed by cones $\tau\in\Sigma$, and each face is obtained as the intersection of $C_{\Sigma,*}(z)$ with the truncating hyperplanes indexed by the rays of $\tau$. We describe how each face can, itself, be viewed as a normal complex associated to the star fan $\Sigma^\tau$, and use this to define (mixed) volumes of faces. Our main result of this section (Proposition~\ref{prop:pyramidmixed}), shows how mixed volumes of normal complexes can be computed in terms of mixed volumes of facets.

In Section 5, we introduce what it means for a triple $(\Sigma,\omega,*)$ to be AF---namely, that the mixed volumes of cubical values satisfy the Alexandrov--Fenchel inequalities. Our main result (Theorem~\ref{thm:reduce}), inspired by work of Cordero-Erausquin, Klartag, Merigot, and Santambrogio \cite{OneMoreProof} and Br\"and\'en and Leake \cite{BrandenLeake}, states that $(\Sigma,\omega,*)$ is AF if (i) all star fans $\Sigma^\tau$ of dimension at least three remain connected after removing the origin and (ii) the quadratic volume polynomials associated to the two-dimensional star fans of $\Sigma$ have exactly one positive eigenvalue. In fact, under these conditions, we argue that the volume polynomial $\Vol_{\Sigma,\omega,*}(z)$ is $\Cub(\Sigma,*)$-Lorentzian, which then implies that $(\Sigma,\omega,*)$ is AF.

In Section 6, we briefly recall relevant notions regarding matroids and Bergman fans, and then we use Theorem~\ref{thm:reduce} to prove that Bergman fans of matroids are AF (Theorem~\ref{thm:bergman}). We conclude the paper by deducing the Heron--Rota--Welsh Conjecture as a consequence of the Alexandrov--Fenchel inequalities for normal complexes.

\subsection{Relation to other work}

Since the original proof of the Heron--Rota--Welsh Conjecture by Adiprasito, Huh, and Katz \cite{AHK}, there have been a number of alternative proofs, generalizations, and exciting related developments (an incomplete list includes \cite{BHMPW,BHMPW2,BES,ADH,AP1,AP2,BrandenHuh,AGVI,ALGVII,ALGVIII,CP}). We view the volume-theoretic approach in this paper as a new angle from which to view log-concavity of characteristic polynomials of matroids, but we also want to acknowledge that our methods share features of and are indebted to the approaches of several other teams of mathematicians. In particular, our methods rely on the Chow-theoretic interpretation of characteristic polynomials of matroids, proved by Huh and Katz \cite{HuhKatz}, which was central in the original proof of Adiprasito, Huh, and Katz \cite{AHK}, as well as in the subsequent proofs by Braden, Huh, Matherne, Proudfoot, and Wang \cite{BHMPW} and Backman, Eur, and Simpson \cite{BES}. In addition, our methods prove that volume polynomials are Lorentzian, which is also a central feature in the methods of both Backman, Eur, and Simpson \cite{BES} and Br\"and\'en and Leake \cite{BrandenLeake}. We note that, while the methods of \cite{BES} and \cite{BrandenLeake} seem to be tailored primarily for matroids, our methods readily extend to the more general setting of tropical intersection theory (this extension will be spelled out in a forthcoming work of the third author). By adding a new volume-theoretic approach to the Heron--Rota--Welsh Conjecture to the literature, we hope that this paper will serve to welcome a new batch of geometrically-minded folks into the fold of this flourishing area of research, opening the door for further developments.

\subsection{Acknowledgements}

The authors would like to express their gratitude to Federico Ardila, Matthias Beck, Emily Clader, Chris Eur, and Serkan Ho\c{s}ten for sharing insights related to this project. This work was supported by a grant from the National Science Foundation: DMS-2001439.

\section{Background on normal complexes}\label{sec:background}

In this section, we establish notation, conventions, and preliminary results regarding polyhedral fans and normal complexes.

\subsection{Fan definitions and conventions}

Let $N_\R$ be a real vector spaces of dimension $n$. Given a polyhedral fan $\Sigma\subseteq N_\R$, we denote the $k$-dimensional cones of $\Sigma$ by $\Sigma{(k)}$. Let $\preceq$ denote the face containment relation among the cones of $\Sigma$, and for each cone $\sigma\in\Sigma$, let $\sigma(k)\subseteq\Sigma(k)$ denote the $k$-dimensional faces of $\sigma$. For any cone $\sigma$, let $\sigma^\circ$ denote the relative interior of $\sigma$ and denote the linear span of $\sigma$ by $N_{\sigma,\R}\subseteq N_\R$.

We say that a fan $\Sigma$ is \textbf{pure} if all of the maximal cones in $\Sigma$ have the same dimension. We say that $\Sigma$ is \textbf{marked} if we have chosen a distinguished generating vector $0\neq u_\rho\in \rho$ for each ray $\rho\in\Sigma(1)$. Henceforth, we assume that all fans are pure, polyhedral, and marked, and we use the term \textbf{$d$-fan} to refer to a pure, polyhedral, marked fan of dimension $d$.

We say that $\Sigma$ is \textbf{simplicial} if $\dim(N_{\sigma,\R})=|\sigma(1)|$ for all $\sigma\in\Sigma$. The faces of a simplicial cone $\sigma$ are in bijective correspondence with the subsets of $\sigma(1)$. For every face containment $\tau\preceq\sigma$ in a simplicial fan $\Sigma$, let $\sigma\setminus\tau$ denote the face of $\sigma$ with rays $\sigma(1)\setminus\tau(1)$. Given two faces $\tau,\pi\preceq\sigma$, denote by $\tau\cup\pi$ the face of $\sigma$ with rays $\tau(1)\cup\pi(1)$.

Given a simplical $d$-fan $\Sigma$ and a weight function $\omega:\Sigma(d)\rightarrow\R_{>0}$, we say that the pair $(\Sigma,\omega)$ is a \textbf{tropical fan} if it satisfies the weighted balancing condition:
\[
\sum_{\sigma\in\Sigma(d)\atop \tau\prec\sigma}\omega(\sigma)u_{\sigma\setminus\tau}\in N_{\tau,\R} \;\;\; \text{ for all } \;\;\; \tau\in\Sigma(d-1).
\]
While the definition of tropical fans can be generalized to nonsimplicial fans, we will assume throughout this paper that all tropical fans are simplicial. If $\omega(\sigma)=1$ for all $\sigma\in\Sigma(d)$, we say that $\Sigma$ is \textbf{balanced} and we omit $\omega$ from the notation.

\subsection{Chow rings and degree maps}
Let $M_\R$ denote the dual of $N_\R$ and let $\langle-,-\rangle$ be the duality pairing. Given a simplicial fan $\Sigma\subseteq N_\R$, the \textbf{Chow ring of $\Sigma$} is defined by
\[
A^\bullet(\Sigma)\defeq \frac{\R\big[x_\rho\;|\;\rho\in\Sigma{(1)}\big]}{\I+\J}
\]
where
\[
\I\defeq \big\langle x_{\rho_1}\cdots x_{\rho_k}\;|\;\R_{\geq 0}\{\rho_1,\dots,\rho_k\}\notin\Sigma\big\rangle \;\;\; \text{ and } \;\;\; \J\defeq \bigg\langle \sum_{\rho\in\Sigma{(1)}}\langle v,u_\rho\rangle x_\rho\;\bigg|\;v\in M_\R\bigg\rangle.
\]
As both $\I$ and $\J$ are homogeneous, the Chow ring $A^\bullet(\Sigma)$ is a graded ring, and we denote by $A^k(\Sigma)$ the subgroup of homogeneous elements of degree $k$. We denote the generators of $A^\bullet(\Sigma)$ by $X_\rho\defeq [x_\rho]\in A^1(\Sigma)$, and for any $\sigma\in\Sigma(k)$, we define
\[
 X_\sigma\defeq \prod_{\rho\in\sigma(1)}X_\rho\in A^k(\Sigma).
\]
If $\Sigma$ is a simplicial $d$-fan, then every element of $A^k(\Sigma)$ can be written as a linear combination of $X_\sigma$ with $\sigma\in\Sigma(k)$ (see, for example, \cite[Proposition~5.5]{AHK}). It follows that $A^k(\Sigma)=0$ for all $k>d$. If, in addition, $(\Sigma,\omega)$ is tropical, then there is a well-defined \textbf{degree map}
\[
\deg_{\Sigma,\omega}:A^d(\Sigma)\rightarrow\R
\]
such that $\deg_{\Sigma,\omega}(X_\sigma)=\omega(\sigma)$ for every $\sigma\in\Sigma(d)$ (see, for example, \cite[Proposition~5.6]{AHK}).

\subsection{Normal complexes}

We now recall the construction of normal complexes from \cite{NR}. In addition to a simplicial $d$-fan $\Sigma\subseteq N_\R$, the normal complex construction requires an additional choice of an inner product $*\in\Inn(N_\R)$ and a value $z\in\R^{\Sigma(1)}$. Given such a $*$ and $z$, we define a set of hyperplanes and half-spaces in $N_\R$ associated to each $\rho\in\Sigma$ by
\[
H_{\rho,*}(z)\defeq \{v\in N_\R \mid v*u_\rho= z_\rho\}\;\;\;\text{ and }\;\;\;H_{\rho,*}^-(z)\defeq \{v\in N_\R \mid v*u_\rho\leq z_\rho\}.
\]
We then define polytopes $P_{\sigma,*}(z)$, one for each $\sigma\in\Sigma$, by
\[
P_{\sigma,*}(z)\defeq \sigma\cap\bigcap_{\rho\in\sigma(1)}H_{\rho,*}^-(z).
\]
Notice that $P_{\sigma,*}(z)$ is simply a truncation of the cone $\sigma$ by hyperplanes that are normal to the rays of $\sigma$---what it means to be normal is determined by $*$, and the locations of the normal hyperplanes along the rays of the cone are determined by $z$. We would like to construct a polytopal complex from these polytopes, but in general, they do not meet along faces. To ensure that they meet along faces, we require a compatibility between $z$ and $*$.

For each $\sigma\in\Sigma$, let $w_{\sigma,*}(z)\in N_{\sigma,\R}$ be the unique vector such that $w_{\sigma,*}(z)*u_\rho=z_\rho$ for all $\rho\in\sigma(1)$. That such a vector exists and is unique follows from the fact that the vectors $u_\rho$ with $\rho\in\sigma(1)$ are linearly independent---this is equivalent to the simplicial hypothesis. We then say that $z$ is \textbf{cubical (pseudocubical) with respect to $(\Sigma,*)$} if
\[
w_{\sigma,*}(z)\in\sigma^\circ\;\;\;(w_{\sigma,*}(z)\in\sigma)\;\;\;\text{ for all }\;\;\;\sigma\in\Sigma.
\]
In other words, the pseudocubical values are those values of $z$ for which the truncating hyperplanes intersect within each cone, and the cubical values are those for which they intersect in the relative interior of each cone. The collection of cubical values are denoted $\Cub(\Sigma,*)\subseteq\R^{\Sigma(1)}$ and the pseudocubical values are denoted $\oCub(\Sigma,*)\subseteq\R^{\Sigma(1)}$.

We now summarize key results from \cite{NR} that will be necessary for the developments in this paper (see \cite[Propositions~3.2, 3.3, and 3.7 ]{NR}).

\begin{proposition}\label{prop:normalcomplexprelims}
Let $\Sigma\subseteq N_\R$ be a simplicial $d$-fan and let $*\in\Inn(N_\R)$ be an inner product.
\begin{enumerate}
\item The set $\oCub(\Sigma,*)\subseteq\R^{\Sigma(1)}$ is a polyhedral cone with $\oCub(\Sigma,*)^\circ=\Cub(\Sigma,*)$.
\item For $z\in\oCub(\Sigma,*)$, the vertices of $P_{\sigma,*}(z)$ are $\{w_{\tau,*}(z)\mid\tau\preceq\sigma\}$.
\item For $z\in\oCub(\Sigma,*)$, the polytopes $P_{\sigma,*}(z)$ meet along faces.
\end{enumerate}
\end{proposition}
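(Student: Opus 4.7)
The plan is to handle each of the three parts separately, with part (2) carrying the main technical weight and parts (1) and (3) following from standard considerations once (2) is in hand.

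For part (1), I would first observe that, for each $\sigma\in\Sigma$, the assignment $z\mapsto w_{\sigma,*}(z)$ is linear, since $w_{\sigma,*}(z)$ is the unique solution to the linear system $w*u_\rho=z_\rho$ ($\rho\in\sigma(1)$) in $N_{\sigma,\R}$. Expanding $w_{\sigma,*}(z)=\sum_{\rho\in\sigma(1)}c^\sigma_\rho(z)\,u_\rho$ in the basis $\{u_\rho\}_{\rho\in\sigma(1)}$, each $c^\sigma_\rho$ is a linear functional of $z$, and pseudocubicality at $\sigma$ is precisely $c^\sigma_\rho(z)\geq 0$ for every $\rho\in\sigma(1)$. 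So $\oCub(\Sigma,*)$ is an intersection of finitely many closed linear half-spaces and hence a polyhedral cone, while $\Cub(\Sigma,*)$ corresponds to the strict inequalities, which cut out its relative interior.

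For part (2), the crux is verifying that $w_{\tau,*}(z)\in P_{\sigma,*}(z)$ for every $\tau\preceq\sigma$. The only nontrivial check is the inequality $w_{\tau,*}(z)*u_\rho\leq z_\rho$ for $\rho\in\sigma(1)\setminus\tau(1)$. I would prove this by comparing $w_{\tau,*}(z)$ with $w_{\pi,*}(z)$ where $\pi=\tau\cup\{\rho\}\preceq\sigma$. The difference $w_{\pi,*}(z)-w_{\tau,*}(z)$ lies in $N_{\pi,\R}$ and is $*$-orthogonal to every $u_{\rho'}$ with $\rho'\in\tau(1)$, so it must be a scalar multiple of $u_\rho^\perp$, the component of $u_\rho$ that is $*$-orthogonal to $N_{\tau,\R}$ inside $N_{\pi,\R}$. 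Reading off the $u_\rho$-coefficient identifies this scalar with the coefficient $\lambda_\rho$ of $u_\rho$ when $w_{\pi,*}(z)$ is expanded in the basis $\{u_{\rho'}\}_{\rho'\in\pi(1)}$; pseudocubicality at $\pi$ gives $\lambda_\rho\geq 0$, and $u_\rho\notin N_{\tau,\R}$ gives $u_\rho^\perp * u_\rho=\|u_\rho^\perp\|^2>0$. Pairing with $u_\rho$ then yields
\[
z_\rho - w_{\tau,*}(z)*u_\rho \;=\; \bigl(w_{\pi,*}(z)-w_{\tau,*}(z)\bigr)*u_\rho \;=\; \lambda_\rho\,\|u_\rho^\perp\|^2 \;\geq\; 0.
\]
To then identify $\{w_{\tau,*}(z):\tau\preceq\sigma\}$ with the full vertex set, I would count active defining inequalities: at $w_{\tau,*}(z)$, exactly the $|\tau(1)|$ truncating hyperplanes indexed by $\tau(1)$ are tight, and $w_{\tau,*}(z)\in N_{\tau,\R}$ forces it to lie on each of the $d-|\tau(1)|$ facets of $\sigma$ opposite a ray in $\sigma(1)\setminus\tau(1)$; these $d$ hyperplanes are linearly independent. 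Conversely, any vertex $v$ determines a face $\tau\preceq\sigma$ via $\tau(1)=\{\rho\in\sigma(1): c_\rho>0\text{ in }v=\sum c_\rho u_\rho\}$, and the active constraints at $v$ force $v=w_{\tau,*}(z)$.

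Part (3) will then follow quickly. For $\sigma_1,\sigma_2\in\Sigma$ with common face $\tau=\sigma_1\cap\sigma_2$, the identity $P_{\sigma_1,*}(z)\cap P_{\sigma_2,*}(z)=P_{\tau,*}(z)$ is a direct set-theoretic check: $\supseteq$ is immediate from $\tau\preceq\sigma_i$, and $\subseteq$ uses $\sigma_1\cap\sigma_2=\tau$ together with the fact that every truncating constraint indexed by $\rho\in\tau(1)$ already appears in the definition of $P_{\sigma_i,*}(z)$. To see that $P_{\tau,*}(z)$ is a genuine face of each $P_{\sigma_i,*}(z)$, I would write it as the intersection of $P_{\sigma_i,*}(z)$ with the facet hyperplanes of $\sigma_i$ opposite the rays in $\sigma_i(1)\setminus\tau(1)$, and observe via part (2) that its vertex set $\{w_{\tau',*}(z):\tau'\preceq\tau\}$ is contained in the vertex set of $P_{\sigma_i,*}(z)$. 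The genuine obstacle in the whole argument is the inequality step in part (2): it is the only place where pseudocubicality is used in a nontrivial way, and the orthogonal projection picture above is exactly what converts pseudocubicality at $\pi$ into the bound that glues the local polytopes together into a polytopal complex.
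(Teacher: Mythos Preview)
The paper does not supply its own proof of this proposition; it is quoted from \cite{NR} (Propositions~3.2, 3.3, and 3.7 there), so there is no in-paper argument to compare against. Your outline is largely sound and the central technical step---the orthogonal-projection computation showing $z_\rho-w_{\tau,*}(z)*u_\rho=\lambda_\rho\|u_\rho^\perp\|^2\geq 0$---is exactly the right mechanism and is carried out correctly. A few points deserve tightening.

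First, in part~(3) the inclusion $P_{\tau,*}(z)\subseteq P_{\sigma_i,*}(z)$ is \emph{not} immediate from $\tau\preceq\sigma_i$: a point of $P_{\tau,*}(z)$ is only constrained by the truncating half-spaces for $\rho\in\tau(1)$, so you must still check $v*u_\rho\leq z_\rho$ for $\rho\in\sigma_i(1)\setminus\tau(1)$. This is precisely where the inequality you proved in part~(2) is used (applied to the vertices $w_{\tau',*}(z)$ of $P_{\tau,*}(z)$), so the ingredients are there---just attribute the step correctly.

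Second, the converse in part~(2) is too quick. At a vertex $v$ with support face $\tau$, you assert that ``the active constraints force $v=w_{\tau,*}(z)$,'' but a priori the $d$ independent active constraints could include a truncating hyperplane $H_{\rho,*}(z)$ with $\rho\notin\tau(1)$ in place of one with $\rho\in\tau(1)$; nothing you wrote rules this out. One clean fix is induction on $\dim\sigma$: if some facet constraint $c_\rho=0$ is active, then $v$ lies in $P_{\sigma,*}(z)\cap N_{\sigma',\R}$ with $\sigma'=\sigma\setminus\rho$, and your forward inequality shows this intersection equals $P_{\sigma',*}(z)$, so induction applies; if no facet constraint is active, then all $d$ truncating constraints must be, giving $v=w_{\sigma,*}(z)$.

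Finally, in part~(1) the passage from ``strict inequalities'' to ``relative interior'' is not automatic: for a cone $\{z:\ell_i(z)\geq 0\}$ one has $\{z:\ell_i(z)>0\}$ equal to the relative interior only when none of the $\ell_i$ vanish identically on the cone. You should either argue that $\oCub(\Sigma,*)$ is full-dimensional in $\R^{\Sigma(1)}$, or otherwise justify that no defining functional $c^\sigma_\rho$ is an implicit equality.
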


For any polytope $P$, let $\widehat P$ denote the set of all faces of $P$. The third part of Proposition~\ref{prop:normalcomplexprelims} implies that
\[
C_{\Sigma,*}(z)\defeq \bigcup_{\sigma\in\Sigma(d)}\widehat{P_{\sigma,*}(z)}
\]
is a polytopal complex whenever $z\in\oCub(\Sigma,*)$, and this polytopal complex is called the \textbf{normal complex of $\Sigma$ with respect to $*$ and $z$.}

Below, we depict a two-dimensional tropical fan and an associated normal complex. The fan is comprised of nine two-dimensional cones glued along faces, and each of these nine cones corresponds to a quadrilateral in the normal complex.

\begin{center}
\tdplotsetmaincoords{68}{55}
\begin{tikzpicture}[scale=2,tdplot_main_coords]
\draw[draw=blue!20,fill=blue!20,fill opacity=0.5]  (0,0, 0)-- (1, 0, 0) -- (1, 1, 1) -- cycle;
\draw[draw=blue!20,fill=blue!20,fill opacity=0.5]  (0,0, 0)-- (0, 1, 0) -- (1, 1, 1) -- cycle;
\draw[draw=blue!20,fill=blue!20,fill opacity=0.5]  (0,0, 0)-- (0, 0, 1) -- (1, 1, 1) -- cycle;
\draw[draw=blue!20,fill=blue!20,fill opacity=0.5] (0,0, 0)-- (1, 0, 0) -- (0, -1, -1) -- cycle;
\draw[draw=blue!20,fill=blue!20,fill opacity=0.5] (0,0, 0)-- (0, 1, 0) -- (-1, 0, -1) -- cycle;
\draw[draw=blue!20,fill=blue!20,fill opacity=0.5] (0,0, 0)-- (0, 0, 1) -- (-1, -1, 0) -- cycle;
\draw[draw=blue!20,fill=blue!20,fill opacity=0.5] (0,0, 0)-- (-1, -1, 0) -- (-1, -1, -1) -- cycle;
\draw[draw=blue!20,fill=blue!20,fill opacity=0.5] (0,0, 0)-- (-1, 0, -1) -- (-1, -1, -1) -- cycle;
\draw[draw=blue!20,fill=blue!20,fill opacity=0.5] (0,0, 0)-- (0, -1, -1) -- (-1, -1, -1) -- cycle;
\draw[->] (0,0,0) -- (1,0,0);
%\node[right] at (1,0,0) {$\rho_1$};
\draw[->,gray] (0,0,0) -- (0,1,0); 
%\node[above,gray] at (0,0.97,0) {$\rho_2$};
\draw[->] (0,0,0) -- (0,0,1);
%\node[above] at (0,0,1) {$\rho_3$};
\draw[->] (0,0,0) -- (1,1,1);
%\node[right] at (1,1,1) {$\rho_{123}$};
\draw[->] (0,0,0) -- (-1,-1,-1);
%\node[left] at (-1,-1,-1) {$\rho_0$};
\draw[->] (0,0,0) -- (-1,-1,0);
%\node[left] at (-1,-1,0) {$\rho_{03}$};
\draw[->,gray] (0,0,0) -- (-1,0,-1); 
%\node[left,gray] at (-1,0,-1) {$\rho_{02}$};
\draw[->] (0,0,0) -- (0,-1,-1); 
%\node[below] at (0,-1,-1) {$\rho_{01}$};
\end{tikzpicture}
\hspace{70bp}
\begin{tikzpicture}[scale=1.1,tdplot_main_coords]
\draw[draw=green!20, fill=green!20, fill opacity=.5] (0,0,0) -- (0, 0, 1.6) -- (1, 1, 1.6) -- (1.2, 1.2, 1.2) -- cycle; %d,bcd
\draw[draw=green!20, fill=green!20, fill opacity=.5] (0,0,0) -- (0, 1.6, 0) -- (1, 1.6, 1) -- (1.2, 1.2, 1.2) -- cycle; %b,bcd
\draw[draw=green!20, fill=green!20, fill opacity=.5] (0,0,0) -- (1.6, 0, 0) -- (1.6, 1, 1) -- (1.2, 1.2, 1.2) -- cycle; %c,bcd
\draw[draw=green!20, fill=green!20, fill opacity=.5] (0,0,0) -- (0, 0, 1.6) -- (-1.6, -1.6, 1.6) -- (-1.6, -1.6, 0) -- cycle; %d,ad
\draw[draw=green!20, fill=green!20, fill opacity=.5] (0,0,0) -- (0, 1.6, 0) -- (-1.6, 1.6, -1.6) -- (-1.6, 0, -1.6) -- cycle; %b,ab
\draw[draw=green!20, fill=green!20, fill opacity=.5] (0,0,0) -- (1.6, 0, 0) -- (1.6, -1.6, -1.6) -- (0, -1.6, -1.6) -- cycle; %c,ac
\draw[draw=green!20, fill=green!20, fill opacity=.5] (0,0,0) -- (-1.6, -1.6, 0) -- (-1.6, -1.6, -.4) -- (-1.2, -1.2, -1.2) -- cycle; %a,ad
\draw[draw=green!20, fill=green!20, fill opacity=.5] (0,0,0) -- (-1.6, 0, -1.6) -- (-1.6, -.4, -1.6) -- (-1.2, -1.2, -1.2) -- cycle; %a,ac
\draw[draw=green!20, fill=green!20, fill opacity=.5] (0,0,0) -- (0, -1.6, -1.6) -- (-.4, -1.6, -1.6) -- (-1.2, -1.2, -1.2) -- cycle; %a,ab

\draw[thick] (0, 0, 1.6) -- (1, 1, 1.6) -- (1.2, 1.2, 1.2);
\draw[dashed] (0, 1.6, 0) -- (1, 1.6, 1) -- (1.2, 1.2, 1.2);
\draw[thick] (.7, 1.6, .7) -- (1, 1.6, 1) -- (1.2, 1.2, 1.2);
\draw[thick] (1.6, 0, 0) -- (1.6, 1, 1) -- (1.2, 1.2, 1.2);
\draw[thick] (0, 0, 1.6)  -- (-1.6,  -1.6, 1.6)  -- (-1.6, -1.6, 0);
\draw[dashed] (0, 1.6, 0) -- (-1.6,  1.6, -1.6) -- (-1.6, 0, -1.6);
\draw[thick] (1.6, 0, 0) -- (1.6, -1.6, -1.6) -- (0, -1.6, -1.6);
\draw[thick] (-1.6, -1.6, 0) -- (-1.6, -1.6, -.4) -- (-1.2, -1.2, -1.2);
\draw[dashed] (-1.6, 0, -1.6) -- (-1.6, -.4, -1.6) -- (-1.2, -1.2, -1.2);
\draw[thick] (0, -1.6, -1.6) -- (-.4, -1.6, -1.6) -- (-1.2, -1.2, -1.2);

\draw[thick] (0, 0, 0) -- (1.2, 1.2, 1.2);
\draw[thick] (0, 0, 0) -- (0, 0, 1.6);
\draw[dashed] (0, 0, 0) -- (0, 1.6, 0);
\draw[thick] (0, 0, 0) -- (1.6, 0, 0);
\draw[thick] (0, 0, 0) -- (0, -1.6, -1.6);
\draw[dashed] (0, 0, 0) -- (-1.6, 0, -1.6);
\draw[thick] (0, 0, 0) -- (-1.6, -1.6, 0);
\draw[thick] (0, 0, 0) -- (-1.2, -1.2, -1.2);
\end{tikzpicture}
\end{center}
The next pair of images depict a three-dimensional fan comprised of two maximal cones meeting along a two-dimensional face, and a corresponding normal complex. While this fan is not tropical, the reader is welcome to view this image as just one small piece of a three-dimensional tropical fan in some higher-dimensional vector space.

\begin{center}
\begin{tikzpicture}
\node[] at (0,0) {\includegraphics[scale=.3]{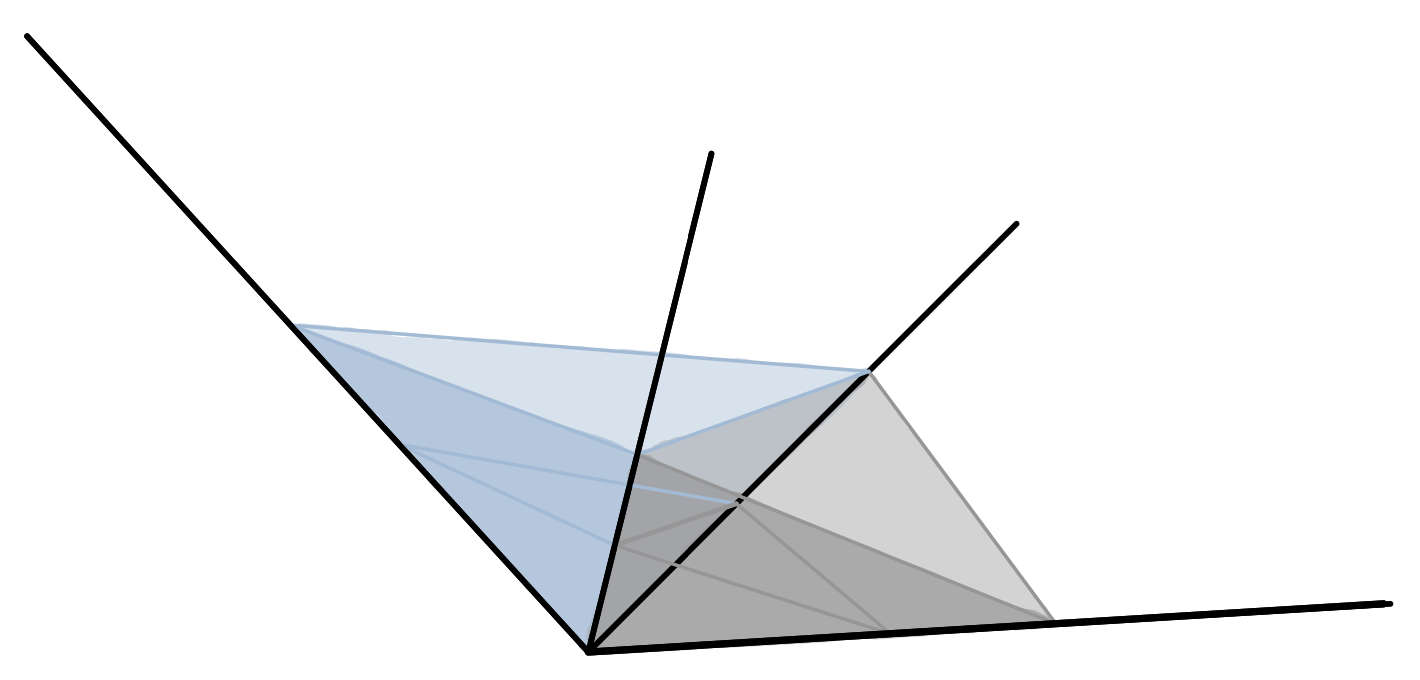}};
\node[] at (8,0) {\includegraphics[scale=.3]{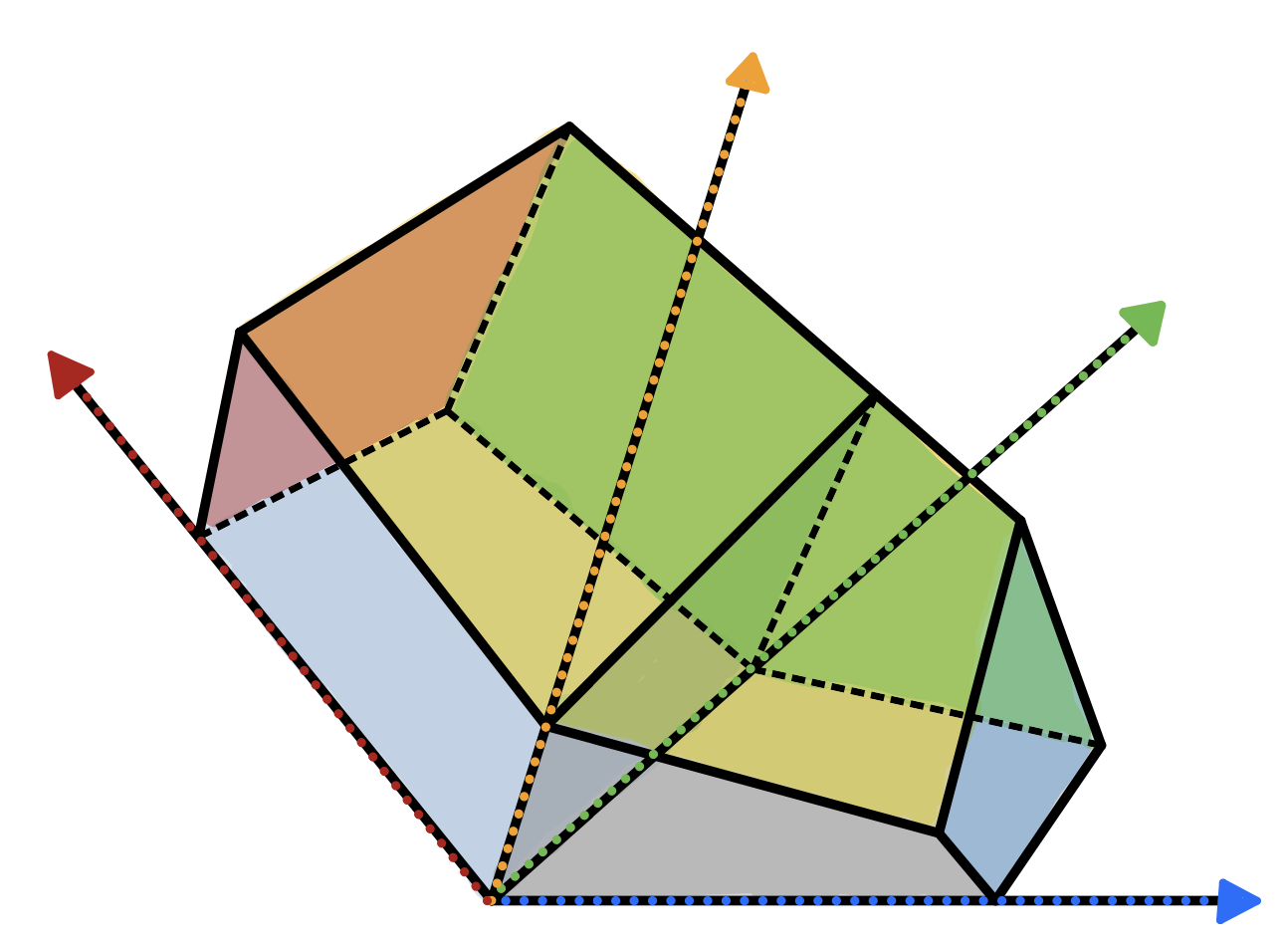}};
\end{tikzpicture}
\end{center}

\subsection{Volumes of normal complexes}

Let $\Sigma\subseteq N_\R$ be a simplicial $d$-fan, $*\in\Inn(N_\R)$ an inner product, and $z\in\oCub(\Sigma,*)$ a pseudocubical value. Informally, the volume of the normal complex $C_{\Sigma,*}(z)$ is the sum of the volumes of the polytopes $P_{\sigma,*}(z)$ with $\sigma\in\Sigma(d)$; however, some care is required in specifying what we mean by \emph{volume} in each subspace $N_{\sigma,\R}$.

For each cone $\sigma\in\Sigma$, define the discrete subgroup
\[
N_\sigma\defeq \mathrm{span}_\Z(u_\rho\;|\;\rho\in\sigma(1))\subseteq N_\R,
\]
and let $M_\sigma$ denote its dual: $M_\sigma\defeq \mathrm{Hom}_\Z(N_\sigma,\Z)\subseteq M_{\sigma,\R}\defeq \mathrm{Hom}_\R(N_{\sigma,\R},\R)$. Using the inner product $*$, we can identify $M_{\sigma,\R}$ with $N_{\sigma,\R}$ and thus, we can view $M_\sigma$ as a lattice in $N_{\sigma,\R}$. For each $\sigma\in\Sigma$, let
\[
\Vol_\sigma:\big\{\text{polytopes in }N_{\sigma,\R}\big\}\rightarrow\R_{\geq 0}
\]
be the volume function determined by the property that a fundamental simplex of the lattice $M_\sigma\subseteq N_{\sigma,\R}$ has unit volume. Define the \textbf{volume of the normal complex $C_{\Sigma,*}(z)$}, denoted $\Vol_{\Sigma,*}(z)$ for brevity, as the sum of the volumes of the constituent $d$-dimensional polytopes:
\[
\Vol_{\Sigma,*}(z)\defeq \sum_{\sigma\in\Sigma(d)}\Vol_\sigma(P_{\sigma,*}(z)).
\]
In slightly more generality, suppose that $\omega:\Sigma(d)\rightarrow \R_{>0}$ is a weight function on the maximal cones of $\Sigma$. The \textbf{volume of the normal complex $C_{\Sigma,*}(z)$ weighted by $\omega$} is defined by
\[
\Vol_{\Sigma,\omega,*}(z)\defeq \sum_{\sigma\in\Sigma(d)}\omega(\sigma)\Vol_\sigma(P_{\sigma,*}(z)).
\]

The main result of \cite{NR} is a Chow-theoretic interpretation of the weighted volumes of normal complexes, valid whenever $(\Sigma,\omega)$ is tropical.

\begin{theorem}[{\cite[Theorem~6.3]{NR}}]\label{thm:vol=deg}
Let $(\Sigma,\omega)$ be a tropical $d$-fan, $*\in\Inn(N_\R)$ an inner product, and $z\in\oCub(\Sigma,*)$ a pseudocubical value. Then
\[
\Vol_{\Sigma,\omega,*}(z)=\deg_{\Sigma,\omega}(D(z)^d)
\]
where
\[
D(z)=\sum_{\rho\in\Sigma(1)} z_\rho X_\rho\in A^1(\Sigma).
\]
\end{theorem}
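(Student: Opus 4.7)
The plan is to prove the identity by induction on $d$, anchored on a derivative identity that reduces to the codimension-one case, and closed via Euler's identity for homogeneous polynomials. First I would verify that both sides are homogeneous polynomials of degree $d$ in $z$. For $\deg_{\Sigma,\omega}(D(z)^d)$ this is immediate from the linearity of $D(z)$ in $z$. For $\Vol_{\Sigma,\omega,*}(z)$, the vertices $w_{\tau,*}(z)$ of each maximal polytope $P_{\sigma,*}(z)$ depend linearly on $z$ by Proposition~\ref{prop:normalcomplexprelims}(2), their combinatorial structure is constant on the (full-dimensional) cone $\oCub(\Sigma,*)$, and scaling $z\mapsto\lambda z$ scales each polytope by $\lambda$; hence $\Vol_{\Sigma,\omega,*}(z)$ agrees with a single polynomial homogeneous of degree $d$ on $\oCub(\Sigma,*)$. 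It therefore suffices to check agreement of first partial derivatives.

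The crux is the derivative identity
\[
\frac{\partial}{\partial z_\rho}\Vol_{\Sigma,\omega,*}(z) \;=\; d\cdot\deg_{\Sigma,\omega}\bigl(X_\rho\cdot D(z)^{d-1}\bigr)
\]
for each $\rho\in\Sigma(1)$. The geometric input is that $\partial_{z_\rho}\Vol_\sigma(P_{\sigma,*}(z))$ equals the lattice-normalized $(d-1)$-volume of the facet $F_{\sigma,\rho}\defeq P_{\sigma,*}(z)\cap H_{\rho,*}(z)$, up to a factor involving $u_\rho*u_\rho$ coming from the fact that raising $z_\rho$ by $\epsilon$ translates $H_{\rho,*}(z)$ outward by $\epsilon/\sqrt{u_\rho*u_\rho}$. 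I would then show that the union $\bigcup_{\sigma\succeq\rho}F_{\sigma,\rho}$, after translating by $-w_{\rho,*}(z)$ and projecting along $u_\rho$ into the quotient $N_\R/N_{\rho,\R}$, is itself the maximal-polytope union of a normal complex associated to the star fan $\Sigma^\rho$, equipped with an inner product $*^\rho$ obtained from $*$ by orthogonal projection and a pseudocubical value $z^\rho$ induced from $z$. Invoking the inductive hypothesis for the $(d-1)$-dimensional tropical fan $(\Sigma^\rho,\omega^\rho)$, together with the standard pushforward relation $\deg_{\Sigma,\omega}(X_\rho\cdot\widetilde\alpha)=\deg_{\Sigma^\rho,\omega^\rho}(\alpha)$ for any lift $\widetilde\alpha$ of $\alpha$, then yields the displayed identity, once the $\sqrt{u_\rho*u_\rho}$ normalization factor is reconciled with the lattice comparison between $M_\sigma$ and $M_{\sigma/\rho}$.

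With the derivative identity in hand, Euler's identity closes the argument:
\[
d\cdot\Vol_{\Sigma,\omega,*}(z) \;=\; \sum_{\rho} z_\rho\,\partial_{z_\rho}\Vol_{\Sigma,\omega,*}(z) \;=\; d\sum_{\rho}z_\rho\deg_{\Sigma,\omega}\bigl(X_\rho D(z)^{d-1}\bigr) \;=\; d\cdot\deg_{\Sigma,\omega}\bigl(D(z)^d\bigr).
\]
The base case $d=1$ is a direct computation in which each $P_{\rho,*}(z)$ is the segment from $0$ to $\tfrac{z_\rho}{u_\rho*u_\rho}u_\rho$ and its lattice-normalized length recovers the coefficient $\omega(\rho)z_\rho$ appearing in $\deg_{\Sigma,\omega}(D(z))$.

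The main obstacle I anticipate is the geometric identification in the middle step: showing that the facet-union $\bigcup_{\sigma\succeq\rho}F_{\sigma,\rho}$ is, after translation and projection, genuinely a normal complex for $(\Sigma^\rho,*^\rho)$ with pseudocubical value $z^\rho$, and matching the lattice normalization factor $\sqrt{u_\rho*u_\rho}$ arising from the derivative calculation against the comparison between the restriction of $M_\sigma$ to $u_\rho^\perp$ and the lattice $M_{\sigma/\rho}$ in $N_{\sigma/\rho,\R}$. Verifying pseudocubicality of $z^\rho$ uses that $w_{\sigma,*}(z)$ projects to $w_{\sigma/\rho,*^\rho}(z^\rho)$, and the lattice reconciliation amounts to the standard observation that the Jacobian of the projection $N_\R\to N_\R/N_{\rho,\R}$, computed relative to the lattices $M_\sigma$ and $M_{\sigma/\rho}$, contributes exactly the compensating $\sqrt{u_\rho*u_\rho}$. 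Once these identifications are pinned down, the inductive machinery delivers the theorem cleanly.
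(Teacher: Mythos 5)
The theorem you are asked to prove is stated here as Theorem~\ref{thm:vol=deg}, but the paper does not prove it: it is imported verbatim as \cite[Theorem~6.3]{NR}, so there is no proof in this paper to compare against. That said, your inductive strategy (pyramid decomposition over facets, Euler's identity, and a Chow-theoretic pushforward to the star fan) closely mirrors the machinery this paper develops later in Section~4 for mixed volumes---Proposition~\ref{prop:pyramidvolume}, Lemma~\ref{lemma:pyramidvolume}, and Corollary~\ref{cor:derivatives} are exactly the ingredients you would need on the geometric side, and they confirm that the normalization-factor reconciliation you anticipate (the interplay between $u_\rho*u_\rho$ and the lattices $M_\sigma$, $M_{\sigma^\rho}$) works out cleanly: the factor $u_\rho*u_\rho$ in Lemma~\ref{lemma:pyramidvolume} is exactly cancelled by $w_{\rho,*}(z)=\tfrac{z_\rho}{u_\rho*u_\rho}u_\rho$, yielding the identity $\partial_{z_\rho}\Vol_{\Sigma,\omega,*}(z)=d\,\Vol_{\Sigma^\rho,\omega^\rho,*^\rho}(z^\rho)$ with no square root surviving.

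There are two places where your argument is thinner than it needs to be. First, the Chow-theoretic step: applying the pushforward relation $\deg_{\Sigma,\omega}(X_\rho\cdot\widetilde\alpha)=\deg_{\Sigma^\rho,\omega^\rho}(\alpha)$ requires establishing that $D(z)\in A^1(\Sigma)$ restricts to the class $D(z^\rho)\in A^1(\Sigma^\rho)$ whose coefficients are the \emph{geometrically defined} $z^\rho_{\eta^\rho}=z_{\hat\eta}-w_{\rho,*}(z)*u_{\hat\eta}$. This is not a bare ``standard'' fact: the restriction map is typically defined using a linear form $\ell$ with $\ell(u_\rho)=1$, and one must check that the particular choice $\ell=\tfrac{u_\rho*(-)}{u_\rho*u_\rho}$ (or, equivalently, that an arbitrary $\ell$ together with the linear relations $\mathcal{J}$ in $A^\bullet(\Sigma^\rho)$) produces exactly your $z^\rho$. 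This compatibility between the inner-product geometry and the Chow-ring algebra is the crux of the theorem, and leaving it to ``standard pushforward'' undersells the content. Second, the induction hypothesis is invoked for $(\Sigma^\rho,\omega^\rho)$, so you must separately verify that the star fan with its induced weight is again tropical (balanced); this follows from balancedness of $\Sigma$, but you do not say so. Neither gap invalidates your outline, but both need to be filled before the argument is complete.
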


\section{Mixed Volumes of Normal Complexes}

Our first aim in this paper is to enhance Theorem~\ref{thm:vol=deg} to a statement about mixed volumes. In order to do this, we briefly recall the classical theory of mixed volumes, for which we recommend the comprehensive text by Schneider \cite{Schneider} as a reference.

\subsection{Mixed volumes of polytopes}

Mixed volumes are the natural result of combining the notion of volume with the operation of Minkowski addition. We start with a $d$-dimensional real vector space $V$ and a volume function $\Vol:\{\text{polytopes in }V\}\rightarrow\R_{\geq 0}$.  The \textbf{mixed volume function} 
\[
\MVol:\{\text{polytopes in } V\}^d\rightarrow\R_{\geq 0}
\]
is the unique function determined by the following three properties.
\begin{itemize}
\item (Symmetry) For any permutation $\pi\in S_d$,
\[
\MVol(P_1,\dots,P_d)=\MVol(\pi(P_1,\dots,P_d)).
\] 
\item (Multilinearity) For any $i=1,\dots,d$ and $\lambda \in\R_{\geq 0}$, 
\begin{align*}
\MVol(P_1,\dots,\lambda P_i+P_i',\dots,P_d)=\lambda&\MVol(P_1,\dots,P_i,\dots,P_d)\\&+\MVol(P_1,\dots,P_i',\dots,P_d),
\end{align*}
where the linear combination of polytopes is defined by
\[
\lambda P_i+P_i'=\{\lambda v+w\mid v\in P_i,w\in P_i'\}.
\]
\item (Normalization) For any polytope $P$,
\[
\MVol(P,\dots,P)=\Vol(P).
\]
\end{itemize}
That such a mixed volume function exists and is unique is due to Minkowski \cite{Minkowski}, who proved that such a function exists more generally for convex bodies, not just for polytopes.

\subsection{Mixed volumes of normal complexes}

We now define a notion of mixed volumes of normal complexes. Let $\Sigma\subseteq N_\R$ be a simplicial $d$-fan and let $*\in\Inn(N_\R)$ be an inner product. Given pseudocubical values $z_1,\dots,z_d\in\oCub(\Sigma,*)$, we define the \textbf{mixed volume of the normal complexes $C_{\Sigma,*}(z_1),\dots,C_{\Sigma,*}(z_d)$}, denoted $\MVol_{\Sigma,*}(z_1,\dots,z_d)$ for brevity, by
\[
\MVol_{\Sigma,*}(z_1,\dots,z_d)\defeq \sum_{\sigma\in\Sigma(d)}\MVol_\sigma(P_{\sigma,*}(z_1),\dots,P_{\sigma,*}(z_d)).
\]
In other words, the mixed volume is the sum of the mixed volumes of the polytopes associated to the top-dimensional cones of $\Sigma$. More generally, if $\omega:\Sigma(d)\rightarrow \R_{>0}$ is a weight function, then the \textbf{mixed volume of the normal complexes $C_{\Sigma,*}(z_1),\dots,C_{\Sigma,*}(z_d)$ weighted by $\omega$} is defined by
\[
\MVol_{\Sigma,\omega,*}(z_1,\dots,z_d)\defeq \sum_{\sigma\in\Sigma(d)}\omega(\sigma)\MVol_\sigma(P_{\sigma,*}(z_1),\dots,P_{\sigma,*}(z_d)).
\]
In order to verify that this is a meaningful notion of mixed volumes for normal complexes, we check that it is characterized by an analogue of the three characterizing properties of mixed volumes of polytopes.

\begin{proposition}\label{prop:mvolchar}
Let $\Sigma\subseteq N_\R$ be a simplicial $d$-fan, $*\in\Inn(N_\R)$ an inner product, and $\omega:\Sigma(d)\rightarrow\R_{>0}$ a weight function.
\begin{enumerate}
\item For any $z_1,\dots,z_d\in\oCub(\Sigma,*)$ and $\pi\in S_d$, 
\[
\MVol_{\Sigma,\omega,*}(z_1,\dots,z_d)=\MVol_{\Sigma,\omega,*}(\pi(z_1,\dots,z_d)).
\]
\item For any $i=1,\dots,d$, and for any $z_1,\dots,z_i,z_i',\dots,z_d\in\oCub(\Sigma,*)$ and $\lambda\in\R_{\geq 0}$, 
\begin{align*}
\MVol_{\Sigma,\omega,*}(z_1,\dots,\lambda z_i+z_i',\dots,z_d)=\lambda &\MVol_{\Sigma,\omega,*}(z_1,\dots,z_i,\dots,z_d)\\
&+\MVol_{\Sigma,\omega,*}(z_1,\dots,z_i',\dots,z_d).
\end{align*}
\item For any $z\in\oCub(\Sigma,*)$, 
\[
\MVol_{\Sigma,\omega,*}(z,\dots,z)=\Vol_{\Sigma,\omega,*}(z).
\]
\end{enumerate}
Moreover, any function $\oCub(\Sigma,*)^d\rightarrow\R_{\geq 0}$ satisfying Properties \emph{(1) -- (3)} must be $\MVol_{\Sigma,\omega,*}$.
\end{proposition}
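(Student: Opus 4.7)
The plan is to reduce properties (1)--(3) to the analogous properties of classical mixed volumes of polytopes applied cone-by-cone, and then derive uniqueness via a standard polarization argument. The technical heart of the argument is the Minkowski linearity identity
\[
P_{\sigma,*}(\lambda z + z') \;=\; \lambda P_{\sigma,*}(z) + P_{\sigma,*}(z'),
\]
valid for all $\sigma\in\Sigma$, all $z,z'\in\oCub(\Sigma,*)$, and all $\lambda\geq 0$. Once this is in hand, each of (1)--(3) is inherited termwise from the corresponding property of the classical mixed volume $\MVol_\sigma$ on polytopes in $N_{\sigma,\R}$, summed against the weights $\omega(\sigma)$.

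To establish Minkowski linearity, I would first treat the cubical case $z,z'\in\Cub(\Sigma,*)$. By Proposition~\ref{prop:normalcomplexprelims}(2), the vertices of $P_{\sigma,*}(z)$ are the vectors $w_{\tau,*}(z)$ indexed by faces $\tau\preceq\sigma$, and the assignment $z\mapsto w_{\tau,*}(z)$ is linear. In the cubical regime, $P_{\sigma,*}(z)$ is a combinatorial $d$-cube cut out by the $2d$ facet inequalities $u_\rho * v \leq z_\rho$ (truncation facets, outer normal $u_\rho$) and $v \in \sigma$ (cone facets, outer normals independent of $z$), so its normal fan is independent of the particular cubical $z$. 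For polytopes sharing a common normal fan, weighted Minkowski sums correspond to linearly combining facet offsets; since the truncation offsets are exactly the coordinates of $z$ and the cone-facet offsets are $0$ for every cubical $z$, this yields the identity. The pseudocubical case then follows by continuity: by Proposition~\ref{prop:normalcomplexprelims}(1), $\Cub(\Sigma,*)$ is dense in $\oCub(\Sigma,*)$, and both sides of the identity are Hausdorff-continuous in $(z,z')$.

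With Minkowski linearity secured, symmetry and normalization follow by inspection of the defining sum, while multilinearity converts Minkowski linearity in the $i$th slot into multilinearity of $\MVol_\sigma$ in its $i$th slot. For uniqueness, suppose $F\colon\oCub(\Sigma,*)^d\to\R_{\geq 0}$ satisfies (1)--(3). Fix $z_1,\dots,z_d\in\oCub(\Sigma,*)$; since $\oCub(\Sigma,*)$ is a cone (Proposition~\ref{prop:normalcomplexprelims}(1)), every nonnegative combination $\lambda_1 z_1 + \cdots + \lambda_d z_d$ lies in $\oCub(\Sigma,*)$. Iterating symmetry and multilinearity then yields
\[
F\!\bigl(\textstyle\sum_i\lambda_i z_i,\ldots,\sum_i\lambda_i z_i\bigr) \;=\; \sum_{k_1+\cdots+k_d=d}\binom{d}{k_1,\ldots,k_d}\lambda_1^{k_1}\cdots\lambda_d^{k_d}\, F\bigl(z_1^{(k_1)},\ldots,z_d^{(k_d)}\bigr),
\]
where $z_i^{(k_i)}$ means $z_i$ repeated $k_i$ times. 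Normalization identifies the left-hand side with $\Vol_{\Sigma,\omega,*}(\sum_i\lambda_i z_i)$, which does not depend on $F$. Comparing coefficients of $\lambda_1\cdots\lambda_d$ (i.e., the polarization identity) uniquely pins down $F(z_1,\dots,z_d)$, forcing $F=\MVol_{\Sigma,\omega,*}$.

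The main obstacle I anticipate is the pseudocubical boundary of the Minkowski linearity identity, where $P_{\sigma,*}(z)$ may degenerate combinatorially and the normal fan is no longer constant in $z$. The Hausdorff-continuous extension from the dense cubical interior handles this cleanly, but a careful treatment should verify that both sides genuinely remain polytopes and that Minkowski addition is compatible with the limiting process across the boundary of $\oCub(\Sigma,*)$.
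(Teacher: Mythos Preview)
Your overall architecture matches the paper's exactly: properties (1) and (3) are inherited termwise from classical mixed volumes, property (2) reduces to the Minkowski linearity identity $P_{\sigma,*}(\lambda z+z')=\lambda P_{\sigma,*}(z)+P_{\sigma,*}(z')$, and uniqueness is the polarization argument comparing coefficients of $\lambda_1\cdots\lambda_d$ in the expansion of $\Vol_{\Sigma,\omega,*}(\sum_i\lambda_i z_i)$.

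The one genuine difference is how you establish the Minkowski linearity identity. You argue first in the cubical interior via the constant-normal-fan principle (polytopes with a fixed normal fan add by adding support values), and then extend to all of $\oCub(\Sigma,*)$ by Hausdorff continuity and density of $\Cub(\Sigma,*)$. The paper instead proves the identity directly for \emph{all} pseudocubical $z,z'$ by a bare-hands double inclusion: for $\subseteq$ it uses the vertex description from Proposition~\ref{prop:normalcomplexprelims}(2) together with the linearity $w_{\tau,*}(\lambda z+z')=\lambda w_{\tau,*}(z)+w_{\tau,*}(z')$, and for $\supseteq$ it simply checks that $\lambda w+w'$ lies in $\sigma$ and satisfies each half-space inequality $v*u_\rho\leq(\lambda z+z')_\rho$. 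The paper's route is shorter and entirely elementary, avoiding any appeal to normal fans, the combinatorial-cube result from \cite{NR}, or a limiting argument at the boundary. Your route is correct (the density of $\Cub(\Sigma,*)$ as relative interior, and Hausdorff continuity of both sides, are fine), but it imports more machinery than is needed and leaves you with exactly the boundary subtlety you flag at the end---a subtlety the paper's direct argument never encounters.
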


\begin{proof}
Given that
\[
\MVol_{\Sigma,\omega,*}(z_1,\dots,z_d)=\sum_{\sigma\in\Sigma(d)}\omega(\sigma)\MVol_\sigma(P_{\sigma,*}(z_1),\dots,P_{\sigma,*}(z_d))
\]
and the summands in the right-hand side are simply mixed volumes of polytopes, Properties~(1) and (3) follow from the symmetry and normalization properties of mixed volumes in the polytope setting. Moreover, once we prove that 
\begin{equation}\label{eq:polytopelinearity}
P_{\sigma,*}(\lambda z+z')=\lambda P_{\sigma,*}(z)+P_{\sigma,*}(z')
\end{equation}
for all $z,z'\in\oCub(\Sigma,*)$ and $\lambda\in\R_{\geq 0}$, then Property (2) also follows from the multilinearity property of mixed volumes in the polytope setting. Thus, it remains to prove \eqref{eq:polytopelinearity}, which we accomplish by proving both inclusions.

First, suppose that $v\in P_{\sigma,*}(\lambda z+z')$. By Proposition~\ref{prop:normalcomplexprelims}, the vertices of $P_{\sigma,*}(\lambda z+z')$ are $\{w_{\tau,*}(\lambda z+z')\mid\tau\preceq\sigma\}$, so we can write $v$ as a convex combination:
\begin{equation}\label{eq:vertexexpression}
v=\sum_{\tau\preceq\sigma}a_\tau\, w_{\tau,*}(\lambda z+z')\;\;\;\text{ for some }\;\;\;a_\tau\in\R_{\geq 0}\;\;\;\text{ with }\;\;\;\sum_{\tau\preceq\sigma}a_\tau=1.
\end{equation}
To prove that $v\in\lambda P_{\sigma,*}(z)+P_{\sigma,*}(z')$, our next step is to prove that the vertices are linear:
\begin{equation}\label{eq:linearws}
w_{\tau,*}(\lambda z+z')=\lambda w_{\tau,*}(z)+w_{\tau,*}(z').
\end{equation}
Since $w_{\tau,*}(\lambda z+z')$ is the unique vector in $N_{\tau,\R}$ with $w_{\tau,*}(\lambda z+z')*u_\rho=(\lambda z+z')_\rho$ for all $\rho\in\tau(1)$, proving \eqref{eq:linearws} amounts to proving that $\lambda w_{\tau,*}(z)+w_{\tau,*}(z')$ also satisfies these equations. Using bilinearity of the inner product and the definition of the $w$ vectors, we have
\begin{align*}
(\lambda w_{\tau,*}(z)+w_{\tau,*}(z'))*u_\rho&=\lambda w_{\tau,*}(z)*u_\rho+w_{\tau,*}(z')*u_\rho\\
&=\lambda z_\rho+z_\rho'\\
&=(\lambda z+z')_\rho.
\end{align*}
Therefore, \eqref{eq:linearws} holds, and substituting \eqref{eq:linearws} into \eqref{eq:vertexexpression} implies that
\[
v=\lambda\sum_{\tau\preceq\sigma}a_\tau w_{\tau,*}(z)+\sum_{\tau\preceq\sigma}a_\tau w_{\tau,*}(z')\in \lambda P_{\sigma,*}(z)+P_{\sigma,*}(z').
\]

To prove the other inclusion, suppose that $v\in \lambda P_{\sigma,*}(z)+P_{\sigma,*}(z')$. Then $v=\lambda w+w'$ for some $w\in P_{\sigma,*}(z)$ and $w'\in P_{\sigma,*}(z')$. This means that $w,w'\in\sigma$ and, in addition, $w\cdot u_\rho\leq z_{\rho}$ and $w'\cdot u_\rho\leq z_{\rho}'$ for all $\rho\in\sigma(1)$. Since $\sigma$ is a cone, $u=\lambda w+w'\in\sigma$ and, for every $\rho\in\sigma(1)$, we have
\begin{align*}
v*u_\rho&=(\lambda w+w')*u_\rho\\
&=\lambda w*u_\rho+w'*u_\rho\\
&\leq \lambda z_\rho+z_\rho',
\end{align*}
from which we conclude that $v\in P_{\sigma,*}(\lambda z+z')$.

Finally, to prove the final assertion of the proposition, suppose that $F:\oCub(\Sigma,*)^d\rightarrow\R_{\geq 0}$ satisfies Properties (1) -- (3). Our goal is to prove that $F(z_1,\dots,z_d)=\MVol_{\Sigma,\omega,*}(z_1,\dots,z_d)$ for any pseudocubical values $z_1,\dots,z_d\in\oCub(\Sigma,*)$. Set $z=\lambda_1 z_1+\dots+\lambda_d z_d$ with $\lambda_1,\dots,\lambda_d\in\R_{\geq 0}$ arbitrary. Property (3) implies that
\[
F(z,\dots,z)=\Vol_{\Sigma,\omega,*}(z)=\MVol_{\Sigma,\omega,*}(z,\dots,z).
\]
Using Properties (1) and (2) we can expand both the left- and right-hand sides of this equation as polynomials in $\lambda_1,\dots,\lambda_d$:
\begin{align*}
\sum_{k_1,\dots,k_d}{d\choose k_1,\dots,k_d}&F(\underbrace{z_1,\dots,z_1}_{k_1},\dots,\underbrace{z_d,\dots,z_d}_{k_d})\lambda_1^{k_1}\cdots\lambda_d^{k_d}\\
&=\sum_{k_1,\dots,k_d}{d\choose k_1,\dots,k_d}\MVol_{\Sigma,\omega,*}(\underbrace{z_1,\dots,z_1}_{k_1},\dots,\underbrace{z_d,\dots,z_d}_{k_d})\lambda_1^{k_1}\cdots\lambda_d^{k_d}
\end{align*}
Equating the coefficients of $\lambda_1\cdots\lambda_d$ in these two polynomials leads to the desired conclusion: $F(z_1,\dots,z_d)=\MVol_{\Sigma,\omega,*}(z_1,\dots,z_d)$.
\end{proof}

Our methods for studying Alexandrov--Fenchel inequalities will also require the following positivity result.

\begin{proposition}\label{prop:positive}
Let $\Sigma\subseteq N_\R$ be a simplicial $d$-fan, $*\in\Inn(N_\R)$ an inner product, and $\omega:\Sigma(d)\rightarrow\R_{>0}$ a weight function. Then 
\[
\MVol_{\Sigma,\omega,*}(z_1,\dots,z_d)\geq 0\;\;\;\text{ for all }\;\;\;z_1,\dots,z_d\in\oCub(\Sigma,*)
\]
and
\[
\MVol_{\Sigma,\omega,*}(z_1,\dots,z_d)> 0\;\;\;\text{ for all }\;\;\;z_1,\dots,z_d\in\Cub(\Sigma,*).
\]
\end{proposition}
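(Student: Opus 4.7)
The plan is to reduce both inequalities to classical facts about mixed volumes of polytopes. Since
\[
\MVol_{\Sigma,\omega,*}(z_1,\ldots,z_d) = \sum_{\sigma \in \Sigma(d)} \omega(\sigma)\, \MVol_\sigma(P_{\sigma,*}(z_1),\ldots,P_{\sigma,*}(z_d))
\]
and $\omega(\sigma) > 0$ for every $\sigma \in \Sigma(d)$, it suffices to show that each summand $\MVol_\sigma(P_{\sigma,*}(z_1),\ldots,P_{\sigma,*}(z_d))$ is nonnegative when $z_i \in \oCub(\Sigma,*)$ and strictly positive when $z_i \in \Cub(\Sigma,*)$.

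The nonnegativity claim is immediate from the classical fact that mixed volumes of polytopes in a $d$-dimensional real vector space are always nonnegative; see Schneider \cite{Schneider}. For the strict positivity, I would show that when each $z_i \in \Cub(\Sigma,*)$, the polytope $P_{\sigma,*}(z_i)$ is full-dimensional in $N_{\sigma,\R}$, and then invoke the classical criterion that the mixed volume of full-dimensional polytopes in a $d$-dimensional ambient space is strictly positive (equivalently, each $P_{\sigma,*}(z_i)$ contains a segment parallel to any prescribed direction, so one can choose segments $S_i \subseteq P_{\sigma,*}(z_i)$ with linearly independent directions, satisfying Minkowski's positivity criterion). Full-dimensionality follows from Proposition~\ref{prop:normalcomplexprelims}(2): the vertices of $P_{\sigma,*}(z_i)$ include $w_{\{0\},*}(z_i) = 0$ (the unique vector in $N_{\{0\},\R} = \{0\}$) together with, for each ray $\rho \in \sigma(1)$, the vertex $w_{\rho,*}(z_i)$, which lies in $N_{\rho,\R} = \R u_\rho$ and, by the cubical hypothesis $w_{\rho,*}(z_i) \in \rho^\circ$, is a strictly positive multiple of $u_\rho$. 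Since $\{u_\rho : \rho \in \sigma(1)\}$ is a basis of $N_{\sigma,\R}$ by the simplicial hypothesis and $\dim \sigma = d$, these $d+1$ vertices are affinely independent, so the $d$-simplex they span is contained in $P_{\sigma,*}(z_i)$, which is therefore $d$-dimensional.

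There is essentially no substantive obstacle beyond correctly invoking the classical positivity criterion for polytope mixed volumes; the reduction to per-$\sigma$ summands and the vertex description from Proposition~\ref{prop:normalcomplexprelims}(2) handle everything else directly. If anything, the subtlest step is noticing that the cubical hypothesis is precisely what forces each relevant vertex $w_{\rho,*}(z_i)$ to be a \emph{positive} multiple of $u_\rho$, which is exactly what is needed to promote the nonneg inequality to a strict one.
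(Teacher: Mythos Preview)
Your proof is correct and follows essentially the same approach as the paper: reduce to the classical nonnegativity and strict positivity of polytope mixed volumes, with the key observation that cubical $z$ forces each $P_{\sigma,*}(z)$ to be full-dimensional. The only cosmetic difference is that the paper establishes full-dimensionality by citing that $P_{\sigma,*}(z)$ is combinatorially a $d$-cube, whereas you give a direct and self-contained argument using the vertex description from Proposition~\ref{prop:normalcomplexprelims}(2).
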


\begin{proof}
The first statement follows from the definition of $\MVol_{\Sigma,\omega,*}$ and the nonnegativity of mixed volumes of polytopes \cite[Theorem~5.1.7]{Schneider}. For the second statement, we first observe that $z\in\Cub(\Sigma,*)$ implies that $P_{\sigma,*}(z)$ has dimension $d$ for every $\sigma\in\Sigma(d)$, which follows from the fact that   $P_{\sigma,*}(z)$ is combinatorially equivalent to a $d$-cube \cite[Proposition~3.8]{NR}. Thus, the second statement follows from the fact that mixed volumes of full-dimensional polytopes are strictly positive \cite[Theorem~5.1.8]{Schneider}.
\end{proof}

\subsection{Mixed volumes and mixed degrees}

We now extend Theorem~\ref{thm:vol=deg} to give a Chow-theoretic interpretation of mixed volumes of normal complexes associated to tropical fans.

\begin{theorem}\label{thm:mvol=mdeg}
Let $(\Sigma,\omega)$ be a tropical $d$-fan, let $*\in\Inn(N_\R)$ be an inner product, and let $z_1,\dots,z_d\in\oCub(\Sigma,*)$ be pseudocubical values. Then
\[
\MVol_{\Sigma,\omega,*}(z_1,\dots,z_d)=\deg_{\Sigma,\omega}(D(z_1)\cdots D(z_d)).
\]
\end{theorem}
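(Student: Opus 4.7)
The plan is to apply the uniqueness clause of Proposition~\ref{prop:mvolchar}. Define a function $F\colon\oCub(\Sigma,*)^d\to\R$ by
\[
F(z_1,\dots,z_d)\defeq \deg_{\Sigma,\omega}\bigl(D(z_1)\cdots D(z_d)\bigr).
\]
My strategy is to verify that $F$ satisfies the three characterizing properties (symmetry, multilinearity, normalization), and then conclude $F=\MVol_{\Sigma,\omega,*}$ by the polarization argument appearing in the proof of Proposition~\ref{prop:mvolchar}.

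Symmetry of $F$ is immediate from the commutativity of $A^\bullet(\Sigma)$, which gives $D(z_{\pi(1)})\cdots D(z_{\pi(d)})=D(z_1)\cdots D(z_d)$ for every $\pi\in S_d$. Multilinearity of $F$ follows from the linearity of $z\mapsto D(z)=\sum_{\rho\in\Sigma(1)}z_\rho X_\rho$, combined with distributivity in $A^\bullet(\Sigma)$ and linearity of $\deg_{\Sigma,\omega}$: substituting $D(\lambda z_i+z_i')=\lambda D(z_i)+D(z_i')$ and expanding the product gives
\[
F(z_1,\dots,\lambda z_i+z_i',\dots,z_d)=\lambda F(z_1,\dots,z_i,\dots,z_d)+F(z_1,\dots,z_i',\dots,z_d).
\]
Normalization is exactly the content of Theorem~\ref{thm:vol=deg}, which gives $F(z,\dots,z)=\deg_{\Sigma,\omega}(D(z)^d)=\Vol_{\Sigma,\omega,*}(z)$ for all $z\in\oCub(\Sigma,*)$.

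To finish, I replay the polarization step from the proof of Proposition~\ref{prop:mvolchar}. Since $\oCub(\Sigma,*)$ is a convex cone (Proposition~\ref{prop:normalcomplexprelims}(1)), for arbitrary $z_1,\dots,z_d\in\oCub(\Sigma,*)$ and any $\lambda_1,\dots,\lambda_d\in\R_{\geq 0}$, the combination $z\defeq \lambda_1 z_1+\cdots+\lambda_d z_d$ is again pseudocubical. Normalization then yields
\[
F(z,\dots,z)=\Vol_{\Sigma,\omega,*}(z)=\MVol_{\Sigma,\omega,*}(z,\dots,z).
\]
Expanding both sides as polynomials in $\lambda_1,\dots,\lambda_d$ via symmetry and multilinearity and equating the coefficients of $\lambda_1\cdots\lambda_d$ gives $F(z_1,\dots,z_d)=\MVol_{\Sigma,\omega,*}(z_1,\dots,z_d)$, as desired.

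The only (mild) obstacle is that Proposition~\ref{prop:mvolchar} formally phrases its uniqueness clause for functions valued in $\R_{\geq 0}$, whereas $F$ is a priori merely real-valued. However, the polarization argument in that proof uses only the three properties and is insensitive to the target, so it applies verbatim to $F$. As a byproduct, the identity $F=\MVol_{\Sigma,\omega,*}$ together with Proposition~\ref{prop:positive} retroactively confirms that $F$ is in fact nonnegative on $\oCub(\Sigma,*)^d$.
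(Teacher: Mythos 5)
Your proof is correct and follows essentially the same route as the paper's: apply the uniqueness clause of Proposition~\ref{prop:mvolchar} to the function $(z_1,\dots,z_d)\mapsto\deg_{\Sigma,\omega}(D(z_1)\cdots D(z_d))$, checking symmetry via commutativity of $A^\bullet(\Sigma)$, multilinearity via linearity of $z\mapsto D(z)$ and of $\deg_{\Sigma,\omega}$, and normalization via Theorem~\ref{thm:vol=deg}. Your observation about the codomain mismatch ($\R$ versus $\R_{\geq 0}$) is a legitimate subtlety that the paper passes over silently, and you resolve it correctly by noting that the polarization step in the proof of Proposition~\ref{prop:mvolchar} is insensitive to the target.
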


\begin{proof}
By Proposition~\ref{prop:mvolchar}, it suffices to prove that the function
\begin{align*}
\oCub(\Sigma,*)^d&\rightarrow\R_{\geq 0}\\
(z_1,\dots,z_d)&\mapsto \deg_{\Sigma,\omega}(D(z_1)\cdots D(z_d))
\end{align*}
is symmetric, multilinear, and normalized by $\Vol_{\Sigma,\omega,*}$. Symmetry follows from the fact that $A^\bullet(\Sigma)$ is a commutative ring, multilinearity follows from the fact that $\deg_{\Sigma,\omega}:A^d(\Sigma)\rightarrow\R$ is a linear map, and normalization is the content of Theorem~\ref{thm:vol=deg}.
\end{proof}

\section{Faces of Normal Complexes}

In this section, we develop a face structure for normal complexes, analogous to the face structure of polytopes. Parallel to the polytope case, we will see that each face is obtained by intersecting the normal complex with supporting hyperplanes, that each face can, itself, be viewed as a normal complex, and that a face of a face is, itself, a face. We then prove fundamental properties relating (mixed) volumes of normal complexes to the (mixed) volumes of their facets, which perfectly parallel central results in the classical polytope setting.

\subsection{Orthogonal decompositions}

The face construction for normal complexes makes heavy use of an orthogonal decomposition of $N_\R$ associated to each cone $\tau\in\Sigma$, which we now describe. Associated to each $\tau\in\Sigma$, we have already met the subspace $N_{\tau,\R}\subseteq\N_\R$, which is the linear span of $\tau$, and we now introduce notation for the quotient space
\[
N_\R^\tau\defeq N_\R/N_{\tau,\R}.
\]
With the inner product $*$, we may identify $N^\tau_\R$ as the orthogonal complement of $N_{\tau,\R}$:
\[
N^\tau_\R=N_{\tau,\R}^\perp=\{v\in N_\R\mid v*u=0\text{ for all }u\in N_{\tau,\R}\}\subseteq N_\R,
\]
allowing us to decompose $N_\R$ as an orthogonal sum $N_\R=N_{\tau,\R}\oplus N^\tau_\R$. We denote the orthogonal projections onto the factors of this decomposition by $\pr_\tau$ and $\pr^\tau$.

As we will see below, given a normal complex $C_{\Sigma,*}(z)$ and a cone $\tau\in\Sigma$, we will associate a face $\F^\tau(C_{\Sigma,*}(z))$, and this face will lie in the space $N_\R^\tau$. In order to help the reader digest the construction of $\F^\tau(C_{\Sigma,*}(z))$ and its subsequent interpretation as a normal complex, we henceforth make the convention that \emph{$\tau$ superscripts will be used exclusively for objects associated to the vector space $N_\R^\tau$}. For example, $\Sigma^\tau$ will denote a fan in $N_\R^\tau$ and $*^\tau$ will denote an inner product on $N_\R^\tau$.

\subsection{Faces of normal complexes}

There are two primary steps in the face construction for normal complexes. The first step is completely analogous to the polytope setting: we intersect the normal complex with a collection of supporting hyperplanes to obtain a subcomplex. However, in order to view this resulting subcomplex as a normal complex itself, the second step of the construction requires us to translate this polytopal subcomplex to the origin, where we can then endow it with the structure of a normal complex inside $N_\R^\tau$.

Let $\Sigma\subseteq N_\R$ be a simplicial $d$-fan, $*\in\Inn(N_\R)$ an inner product, and $z\in\oCub(\Sigma,*)$ a pseudocubical value. For each cone $\tau\in\Sigma$, define the \textbf{neighborhood of $\tau$ in $\Sigma$} by
\[
\N_\tau\Sigma\defeq \{\pi \mid \pi\preceq \sigma \text{ for some }\sigma\in\Sigma \text{ with }\tau\preceq\sigma\}.
\]
To illustrate this definition, we have darkened the neighborhood of the ray $\rho$ in the following two-dimensional fan.
\begin{center}
\tdplotsetmaincoords{68}{55}
\begin{tikzpicture}[scale=2,tdplot_main_coords]
\draw[draw=blue!20,fill=blue!20,fill opacity=0.8]  (0,0, 0)-- (1, 0, 0) -- (1, 1, 1) -- cycle;
\draw[draw=blue!20,fill=blue!20,fill opacity=0.8]  (0,0, 0)-- (0, 1, 0) -- (1, 1, 1) -- cycle;
\draw[draw=blue!20,fill=blue!20,fill opacity=0.8]  (0,0, 0)-- (0, 0, 1) -- (1, 1, 1) -- cycle;
\draw[draw=blue!0,fill=blue!20,fill opacity=0.2] (0,0, 0)-- (1, 0, 0) -- (0, -1, -1) -- cycle;
\draw[draw=blue!0,fill=blue!20,fill opacity=0.2] (0,0, 0)-- (0, 1, 0) -- (-1, 0, -1) -- cycle;
\draw[draw=blue!0,fill=blue!20,fill opacity=0.2] (0,0, 0)-- (0, 0, 1) -- (-1, -1, 0) -- cycle;
\draw[draw=blue!0,fill=blue!20,fill opacity=0.2] (0,0, 0)-- (-1, -1, 0) -- (-1, -1, -1) -- cycle;
\draw[draw=blue!0,fill=blue!20,fill opacity=0.2] (0,0, 0)-- (-1, 0, -1) -- (-1, -1, -1) -- cycle;
\draw[draw=blue!0,fill=blue!20,fill opacity=0.2] (0,0, 0)-- (0, -1, -1) -- (-1, -1, -1) -- cycle;
\draw[->] (0,0,0) -- (1,0,0);
\draw[->,gray] (0,0,0) -- (0,1,0); 
\draw[->] (0,0,0) -- (0,0,1);
\draw[->] (0,0,0) -- (1,1,1);
\node[right] at (1,1,1) {$\rho$};
\draw[->] (0,0,0) -- (-1,-1,-1);
\draw[->] (0,0,0) -- (-1,-1,0);
\draw[->,gray] (0,0,0) -- (-1,0,-1); 
\draw[->] (0,0,0) -- (0,-1,-1); 
\draw[draw=blue!20] (0, 1, 0) -- (-.41, .59, -.41);
\end{tikzpicture}
\end{center}
Notice that $\N_\tau\Sigma$ is, itself, a simplicial $d$-fan in $N_\R$ whose cones are a subset of $\Sigma$, and the maximal cones of $\N_\tau\Sigma$ comprise all of the maximal cones of $\Sigma$ that contain $\tau$. Since every maximal cone $\sigma\in\N_\tau\Sigma(d)$ contains $\tau$ as a face, it follows from the definitions that each hyperplane $H_{\rho,*}(z)$ with $\rho\in\tau(1)$ is a supporting hyperplane of $P_{\sigma,*}(z)$:
\[
P_{\sigma,*}(z)\subseteq H_{\rho,*}^-(z)\;\;\;\text{ for all }\;\;\; \sigma\in\N_\tau\Sigma(d)\;\;\;\text{ and }\;\;\;\rho\in\tau(1).
\]
Thus, for each $\sigma\in\N_\tau\Sigma(d)$, we obtain a face of $P_{\sigma,*}(z)$ by intersecting with all of these hyperplanes:
\[
\F_\tau(P_{\sigma,*}(z))\defeq P_{\sigma,*}(z)\cap\bigcap_{\rho\in\tau(1)}H_{\rho,*}(z).
\]
The collection of these polytopes along with all of their faces forms a polytopal subcomplex of $C_{\Sigma,*}(z)$, which we denote
\[
\F_\tau(C_{\Sigma,*}(z))\defeq \bigcup_{\sigma\in\N_\tau\Sigma(d)}\widehat{\F_\tau(P_{\sigma,*}(z))}.
\]

To illustrate how the polytopal subcomplex $\F_\tau(C_{\Sigma,*}(z))$ is constructed in a concrete example, the following image depicts a two-dimensional normal complex where we have darkened the collection of maximal polytopes associated to the neighborhood of a ray $\rho$. We have also drawn in the hyperplane associated to $\rho$. The intersection of the hyperplane and the darkened polytopes is $F_\rho(C_{\Sigma,*}(z))$, which, in this example, is a polytopal complex comprised of three line segments meeting at the point $w_{\rho,*}(z)$.
\begin{center}
\tdplotsetmaincoords{68}{55}
\begin{tikzpicture}[scale=1.1,tdplot_main_coords]
\draw[draw=green!20, fill=green!20, fill opacity=1] (0,0,0) -- (0, 0, 1.6) -- (1, 1, 1.6) -- (1.2, 1.2, 1.2) -- cycle; %d,bcd
\draw[draw=green!20, fill=green!20, fill opacity=1] (0,0,0) -- (0, 1.6, 0) -- (1, 1.6, 1) -- (1.2, 1.2, 1.2) -- cycle; %b,bcd
\draw[draw=green!20, fill=green!20, fill opacity=1] (0,0,0) -- (1.6, 0, 0) -- (1.6, 1, 1) -- (1.2, 1.2, 1.2) -- cycle; %c,bcd
\draw[draw=green!20, fill=green!20, fill opacity=.2] (0,0,0) -- (0, 0, 1.6) -- (-1.6, -1.6, 1.6) -- (-1.6, -1.6, 0) -- cycle; %d,ad
\draw[draw=green!20, fill=green!20, fill opacity=.2] (0,0,0) -- (0, 1.6, 0) -- (-1.6, 1.6, -1.6) -- (-1.6, 0, -1.6) -- cycle; %b,ab
\draw[draw=green!20, fill=green!20, fill opacity=.2] (0,0,0) -- (1.6, 0, 0) -- (1.6, -1.6, -1.6) -- (0, -1.6, -1.6) -- cycle; %c,ac
\draw[draw=green!20, fill=green!20, fill opacity=.2] (0,0,0) -- (-1.6, -1.6, 0) -- (-1.6, -1.6, -.4) -- (-1.2, -1.2, -1.2) -- cycle; %a,ad
\draw[draw=green!20, fill=green!20, fill opacity=.2] (0,0,0) -- (-1.6, 0, -1.6) -- (-1.6, -.4, -1.6) -- (-1.2, -1.2, -1.2) -- cycle; %a,ac
\draw[draw=green!20, fill=green!20, fill opacity=.2] (0,0,0) -- (0, -1.6, -1.6) -- (-.4, -1.6, -1.6) -- (-1.2, -1.2, -1.2) -- cycle; %a,ab

\draw[draw=purple!20, fill=purple!20, fill opacity=.8] (2.1, -.9, 2.4) -- (-.9, 2.1, 2.4) -- (0.1, 3.1, 0.4) -- (3.1, 0.1, 0.4) -- cycle;
\node[] at (1.5,1.3,-1.4) {$H_{\rho,*}(z)$};
\draw[thick,->] (1.4,1.5,-1.2) to [bend right=30] (1.5,1.2,-.1);

\draw[thick] (0, 0, 1.6) -- (1, 1, 1.6) -- (1.2, 1.2, 1.2);
\draw[dashed] (0, 1.6, 0) -- (1, 1.6, 1) -- (1.2, 1.2, 1.2);
\draw[thick] (.7, 1.6, .7) -- (1, 1.6, 1) -- (1.2, 1.2, 1.2);
\draw[thick] (1.6, 0, 0) -- (1.6, 1, 1) -- (1.2, 1.2, 1.2);
\draw[] (0, 0, 1.6)  -- (-1.6,  -1.6, 1.6)  -- (-1.6, -1.6, 0);
\draw[dashed] (0, 1.6, 0) -- (-1.6,  1.6, -1.6) -- (-1.6, 0, -1.6);
\draw[] (1.6, 0, 0) -- (1.6, -1.6, -1.6) -- (0, -1.6, -1.6);
\draw[] (-1.6, -1.6, 0) -- (-1.6, -1.6, -.4) -- (-1.2, -1.2, -1.2);
\draw[dashed] (-1.6, 0, -1.6) -- (-1.6, -.4, -1.6) -- (-1.2, -1.2, -1.2);
\draw[] (0, -1.6, -1.6) -- (-.4, -1.6, -1.6) -- (-1.2, -1.2, -1.2);

\draw[line width=.8mm] (1, 1, 1.6) -- (1.2, 1.2, 1.2);
\draw[line width=.8mm]  (1, 1.6, 1) -- (1.2, 1.2, 1.2);
\draw[line width=.8mm](1.6, 1, 1) -- (1.2, 1.2, 1.2);

\draw[] (0, 0, 0) -- (1.2, 1.2, 1.2);
\draw[] (0, 0, 0) -- (0, 0, 1.6);
\draw[dashed] (0, 0, 0) -- (0, 1.6, 0);
\draw[] (0, 0, 0) -- (1.6, 0, 0);
\draw[] (0, 0, 0) -- (0, -1.6, -1.6);
\draw[dashed] (0, 0, 0) -- (-1.6, 0, -1.6);
\draw[] (0, 0, 0) -- (-1.6, -1.6, 0);
\draw[] (0, 0, 0) -- (-1.2, -1.2, -1.2);

%\draw[->] (0,0,0) -- (2,0,0);
%\draw[->,gray] (0,0,0) -- (0,2,0); 
%\draw[->] (0,0,0) -- (0,0,2);
\draw[->] (0,0,0) -- (2,2,2);
\node[right] at (2,2,2) {$\rho$};
%\draw[->] (0,0,0) -- (-2,-2,-2);
%\draw[->] (0,0,0) -- (-2,-2,0);
%\draw[->,gray] (0,0,0) -- (-2,0,-2); 
%\draw[->] (0,0,0) -- (0,-2,-2); 

\node[] at (2.8,2.8,1) {$F_\rho(C_{\Sigma,*}(z))$};
\draw[thick,->] (2.1,2.1,1) to [bend right=30] (1.4,1.4,1.2);
\end{tikzpicture}\end{center}

One might be tempted to call $\F_\tau(C_{\Sigma,*}(z))$ a ``face'' of $C_{\Sigma,*}(z)$; however, a drawback would be that $\F_\tau(C_{\Sigma,*}(z))$ is not, itself, a normal complex (all normal complexes contain the origin, for example, while $\F_\tau(C_{\Sigma,*}(z))$ generally does not). Thus, our construction involves one more step, which is to translate $\F_\tau(C_{\Sigma,*}(z))$ by the vector $w_{\tau,*}(z)$. Notice that, tracking back through the definitions, there is an identification of affine subspaces 
\[
\bigcap_{\rho\in\tau(1)}H_{\rho,*}(z)=N_{\R}^\tau+w_{\tau,*}(z).
\]
Since $\F_\tau(C_{\Sigma,*}(z))$ is, by definition, contained in the left-hand side, it follows that its translation by $-w_{\tau,*}(z)$ is a polytopal complex in $N_\R^\tau$. We define the \textbf{face of $C_{\Sigma,*}(z)$ associated to $\tau\in\Sigma$} to be this polytopal complex:
\[
\F^\tau(C_{\Sigma,*}(z))\defeq \F_\tau(C_{\Sigma,*}(z))-w_{\tau,*}(z)\subseteq N_\R^\tau.
\]
The face associated to the ray $\rho$ in our running example is depicted below inside $N_\R^\rho$.
\begin{center}
\tdplotsetmaincoords{68}{55}
\begin{tikzpicture}[scale=1.1,tdplot_main_coords]
\draw[draw=green!20, fill=green!20, fill opacity=.2] (0,0,0) -- (0, 0, 1.6) -- (1, 1, 1.6) -- (1.2, 1.2, 1.2) -- cycle; %d,bcd
\draw[draw=green!20, fill=green!20, fill opacity=.2] (0,0,0) -- (0, 1.6, 0) -- (1, 1.6, 1) -- (1.2, 1.2, 1.2) -- cycle; %b,bcd
\draw[draw=green!20, fill=green!20, fill opacity=.2] (0,0,0) -- (1.6, 0, 0) -- (1.6, 1, 1) -- (1.2, 1.2, 1.2) -- cycle; %c,bcd
\draw[draw=green!20, fill=green!20, fill opacity=.2] (0,0,0) -- (0, 0, 1.6) -- (-1.6, -1.6, 1.6) -- (-1.6, -1.6, 0) -- cycle; %d,ad
\draw[draw=green!20, fill=green!20, fill opacity=.2] (0,0,0) -- (0, 1.6, 0) -- (-1.6, 1.6, -1.6) -- (-1.6, 0, -1.6) -- cycle; %b,ab
\draw[draw=green!20, fill=green!20, fill opacity=.2] (0,0,0) -- (1.6, 0, 0) -- (1.6, -1.6, -1.6) -- (0, -1.6, -1.6) -- cycle; %c,ac
\draw[draw=green!20, fill=green!20, fill opacity=.2] (0,0,0) -- (-1.6, -1.6, 0) -- (-1.6, -1.6, -.4) -- (-1.2, -1.2, -1.2) -- cycle; %a,ad
\draw[draw=green!20, fill=green!20, fill opacity=.2] (0,0,0) -- (-1.6, 0, -1.6) -- (-1.6, -.4, -1.6) -- (-1.2, -1.2, -1.2) -- cycle; %a,ac
\draw[draw=green!20, fill=green!20, fill opacity=.2] (0,0,0) -- (0, -1.6, -1.6) -- (-.4, -1.6, -1.6) -- (-1.2, -1.2, -1.2) -- cycle; %a,ab

\draw[line width=.01mm] (0, 0, 1.6) -- (1, 1, 1.6) -- (1.2, 1.2, 1.2);
\draw[line width=.01mm,dashed] (0, 1.6, 0) -- (1, 1.6, 1) -- (1.2, 1.2, 1.2);
\draw[line width=.01mm] (.7, 1.6, .7) -- (1, 1.6, 1) -- (1.2, 1.2, 1.2);
\draw[line width=.01mm] (1.6, 0, 0) -- (1.6, 1, 1) -- (1.2, 1.2, 1.2);
\draw[line width=.01mm] (0, 0, 1.6)  -- (-1.6,  -1.6, 1.6)  -- (-1.6, -1.6, 0);
\draw[line width=.01mm,dashed] (0, 1.6, 0) -- (-1.6,  1.6, -1.6) -- (-1.6, 0, -1.6);
\draw[line width=.01mm] (1.6, 0, 0) -- (1.6, -1.6, -1.6) -- (0, -1.6, -1.6);
\draw[line width=.01mm] (-1.6, -1.6, 0) -- (-1.6, -1.6, -.4) -- (-1.2, -1.2, -1.2);
\draw[line width=.01mm,dashed] (-1.6, 0, -1.6) -- (-1.6, -.4, -1.6) -- (-1.2, -1.2, -1.2);
\draw[line width=.01mm] (0, -1.6, -1.6) -- (-.4, -1.6, -1.6) -- (-1.2, -1.2, -1.2);

\draw[draw=purple!20, fill=purple!20, fill opacity=.8] (.9, -2.1, 1.2) -- (-2.1, .9, 1.2) -- (-1.1, 1.9, -.8) -- (1.9, -1.1, -.8) -- cycle;
\node[] at (1.5,1.3,-2.5) {$N_\R^\rho=H_{\rho,*}(z)-w_{\rho,*}(z)$};
\draw[thick,->] (.2,.3,-2.4) to [bend right=30] (.3,0,-1.2);

\draw[line width=.8mm] (-.2, -.2, .4) -- (0, 0, 0);
\draw[line width=.8mm]  (-.2, .4, -.2) -- (0, 0, 0);
\draw[line width=.8mm](.4, -.2, -.2) -- (0, 0, 0);

\draw[line width=.01mm] (0, 0, 0) -- (1.2, 1.2, 1.2);
\draw[line width=.01mm] (0, 0, 0) -- (0, 0, 1.6);
\draw[line width=.01mm,dashed] (0, 0, 0) -- (0, 1.6, 0);
\draw[line width=.01mm] (0, 0, 0) -- (1.6, 0, 0);
\draw[line width=.01mm] (0, 0, 0) -- (0, -1.6, -1.6);
\draw[line width=.01mm,dashed] (0, 0, 0) -- (-1.6, 0, -1.6);
\draw[line width=.01mm] (0, 0, 0) -- (-1.6, -1.6, 0);
\draw[line width=.01mm] (0, 0, 0) -- (-1.2, -1.2, -1.2);

%\draw[line width=.01mm,->] (0,0,0) -- (2,0,0);
%\draw[line widtht=.01mm,->] (0,0,0) -- (0,2,0); 
%\draw[line width=.01mm,->] (0,0,0) -- (0,0,2);
\draw[->] (0,0,0) -- (2,2,2);
\node[right] at (2,2,2) {$\rho$};
%\draw[line width=.01mm,->] (0,0,0) -- (-2,-2,-2);
%\draw[line width=.01mm,->] (0,0,0) -- (-2,-2,0);
%\draw[line width=.01mm,->] (0,0,0) -- (-2,0,-2); 
%\draw[line width=.01mm,->] (0,0,0) -- (0,-2,-2); 

\node[] at (3,3,0) {$F^\rho(C_{\Sigma,*}(z))=F_\rho(C_{\Sigma,*}(z))-w_{\rho,*}(z)$};
\draw[thick,->] (.9,.9,-.2) to [bend right=30] (.2,.2,0);
\end{tikzpicture}\end{center}

The next pair of images depicts the subcomplex $\F_\rho(C_{\Sigma,*}(z))\subseteq C_{\Sigma,*}(z)$ and, after translating to the origin, the face $\F^\rho(C_{\Sigma,*}(z))$, where $\rho$ is a ray of a three-dimensional fan.

\begin{center}
\begin{tikzpicture}
\node[] at (0,0) {\includegraphics[scale=.3]{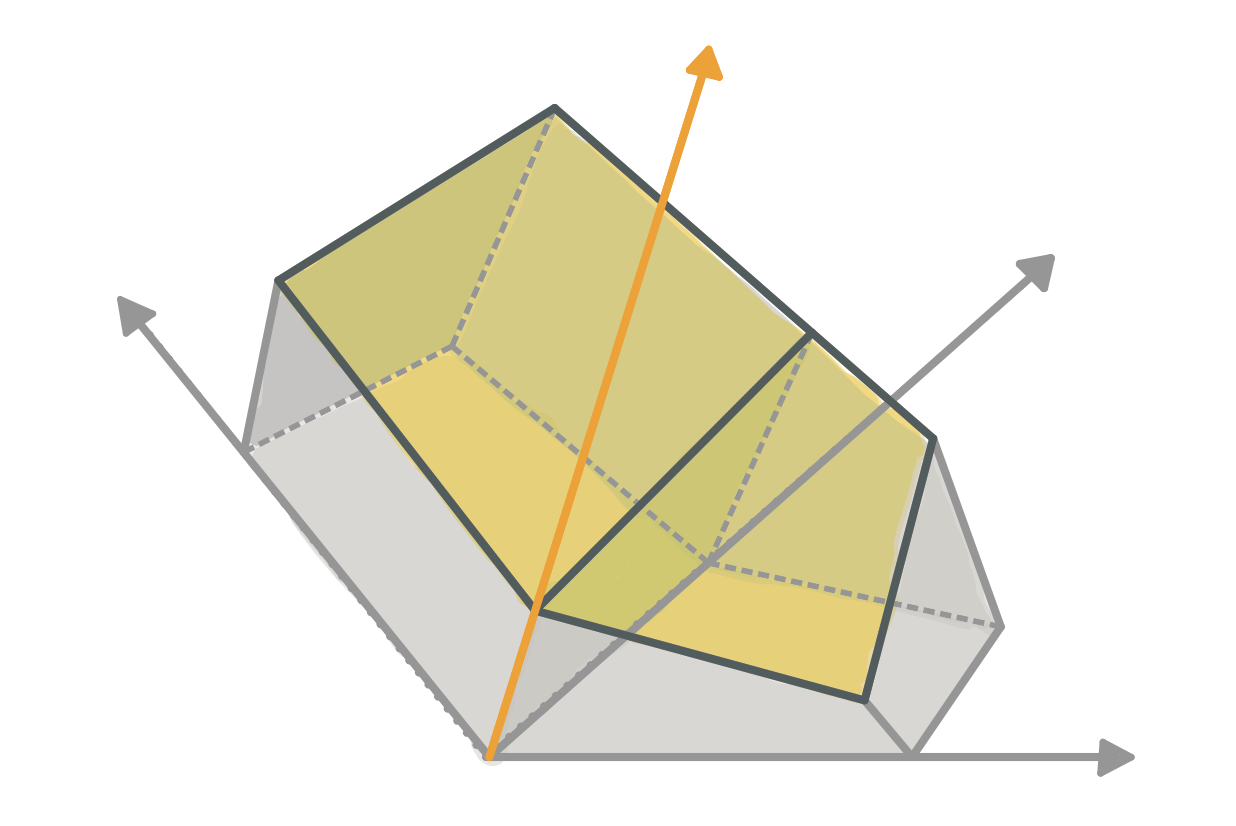}};
\node[] at (8,0) {\includegraphics[scale=.25]{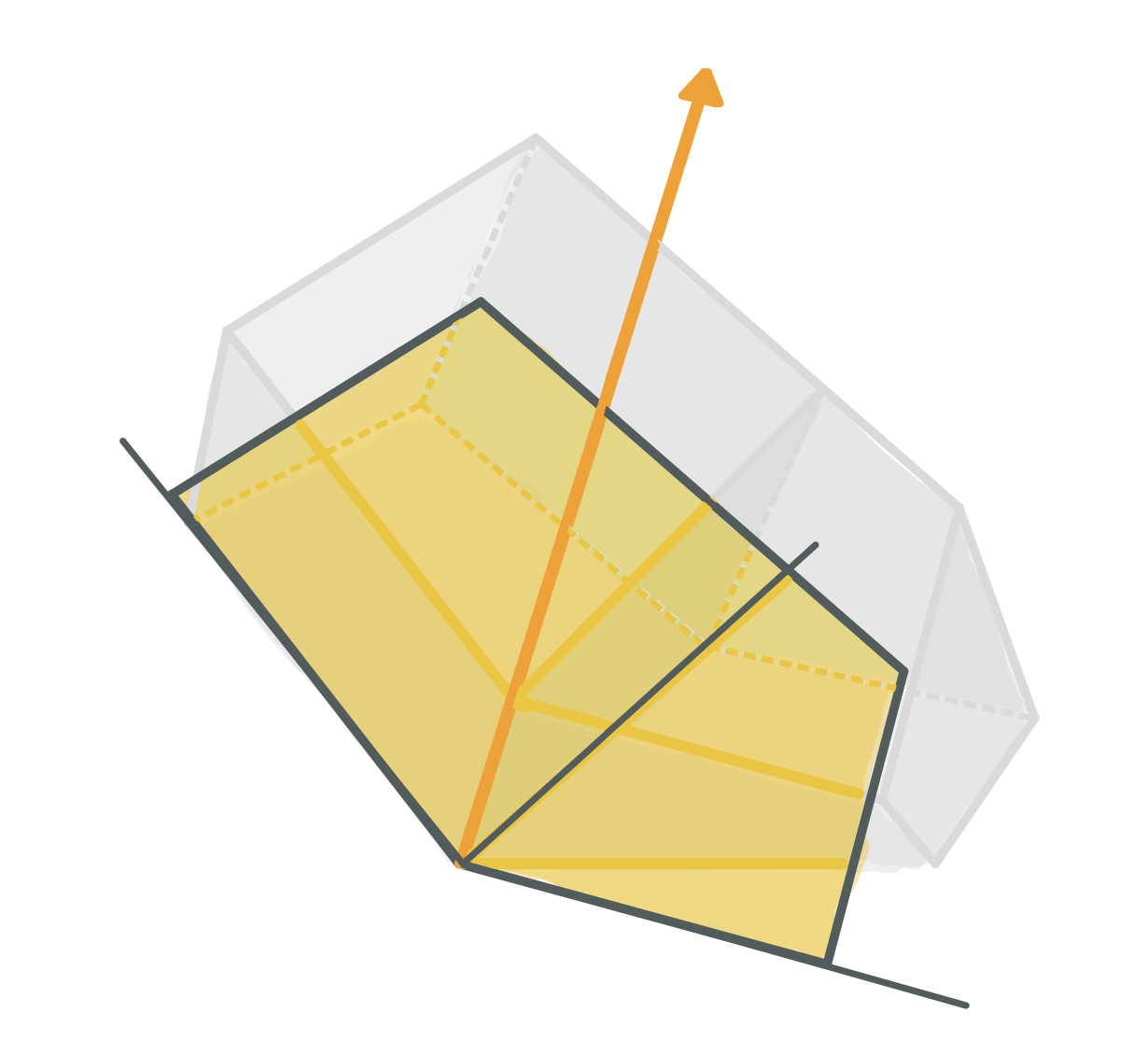}};
\node[] at (.75,1.7) {$\rho$};
\node[] at (8.8,1.8) {$\rho$};
\node[] at (-2.8,1.5) {$F_\rho(C_{\Sigma,*}(z))$};
\draw[thick,->] (-1.7,1.5) to [bend left=30] (-.7,.9);
\node[] at (5.3,-1) {$F^\rho(C_{\Sigma,*}(z))$};
\draw[thick,->] (6.4,-1) to [bend right=20] (7.5,-.5);
\end{tikzpicture}
\end{center}

In the following subsections, it will also be useful to have notation for translates of the polytopes $\F_\tau(P_{\sigma,*}(z))$. We define
\[
\F^\tau(P_{\sigma,*}(z))\defeq \F_\tau(P_{\sigma,*}(z))-w_{\tau,*}(z).
\]
In terms of these translated polytopes, we can write the $\tau$-face of $C_{\Sigma,*}(z)$ as
\[
\F^\tau(C_{\Sigma,*}(z))=\bigcup_{\sigma\in\N_\tau\Sigma(d)}\widehat{\F^\tau(P_{\sigma,*}(z))}.
\]

\subsection{Faces as normal complexes}

Our aim in this subsection is to realize each face $\F^\tau(C_{\Sigma,*}(z))$ as a normal complex. In order to do so, we require several ingredients; namely, we require a marked, pure, simplicial fan $\Sigma^\tau$ in $N_\R^\tau$, an inner product $*^\tau$ on $N_\R^\tau$, and a pseudocubical value $z^\tau\in\oCub(\Sigma^\tau,*^\tau)$. We now define each of these ingredients.

For each cone $\tau\in\Sigma$, define the \textbf{star of $\Sigma$ at $\tau\in\Sigma$} to be the fan in $N_\R^\tau$ comprised of all projections of cones in the neighborhood of $\tau$:
\[
\Sigma^\tau\defeq\{\pr^\tau(\pi)\mid \pi\in\N_\tau\Sigma\}.
\]
The star of a two-dimensional fan $\Sigma$ at a ray $\rho$ is depicted below. In the image, there are three two-dimensional cones in the neighborhood of $\rho$ that are projected onto three one-dimensional cones that comprise the maximal cones in the star fan $\Sigma^\rho$.
\begin{center}
\tdplotsetmaincoords{68}{55}
\begin{tikzpicture}[scale=2,tdplot_main_coords]
\draw[draw=blue!20,fill=blue!20,fill opacity=0.8]  (0,0, 0)-- (1, 0, 0) -- (1, 1, 1) -- cycle;
\draw[draw=blue!20,fill=blue!20,fill opacity=0.8]  (0,0, 0)-- (0, 1, 0) -- (1, 1, 1) -- cycle;
\draw[draw=blue!20,fill=blue!20,fill opacity=0.8]  (0,0, 0)-- (0, 0, 1) -- (1, 1, 1) -- cycle;
\draw[draw=blue!0,fill=blue!20,fill opacity=0.2] (0,0, 0)-- (1, 0, 0) -- (0, -1, -1) -- cycle;
\draw[draw=blue!0,fill=blue!20,fill opacity=0.2] (0,0, 0)-- (0, 1, 0) -- (-1, 0, -1) -- cycle;
\draw[draw=blue!0,fill=blue!20,fill opacity=0.2] (0,0, 0)-- (0, 0, 1) -- (-1, -1, 0) -- cycle;
\draw[draw=blue!0,fill=blue!20,fill opacity=0.2] (0,0, 0)-- (-1, -1, 0) -- (-1, -1, -1) -- cycle;
\draw[draw=blue!0,fill=blue!20,fill opacity=0.2] (0,0, 0)-- (-1, 0, -1) -- (-1, -1, -1) -- cycle;
\draw[draw=blue!0,fill=blue!20,fill opacity=0.2] (0,0, 0)-- (0, -1, -1) -- (-1, -1, -1) -- cycle;
\draw[->] (0,0,0) -- (1,0,0);
\draw[->,gray] (0,0,0) -- (0,1,0); 
\draw[->] (0,0,0) -- (0,0,1);
\draw[->] (0,0,0) -- (1,1,1);
\node[right] at (1,1,1) {$\rho$};
\draw[->] (0,0,0) -- (-1,-1,-1);
\draw[->] (0,0,0) -- (-1,-1,0);
\draw[->,gray] (0,0,0) -- (-1,0,-1); 
\draw[->] (0,0,0) -- (0,-1,-1); 
\draw[draw=blue!20] (0, 1, 0) -- (-.41, .59, -.41);
\node[] at (-.8,-.8,.6) {$\Sigma\subseteq N_\R$};
\end{tikzpicture}
\hspace{50bp}
\begin{tikzpicture}[scale=2,tdplot_main_coords]
\draw[draw=purple!20, fill=purple!20, fill opacity=.8] (.9*.65, -2.1*.65, 1.2*.65) -- (-2.1*.65, .9*.65, 1.2*.65) -- (-1.1*.65, 1.9*.65, -.8*.65) -- (1.9*.65, -1.1*.65, -.8*.65) -- cycle;
\node[] at (0.6,0.6,0.5) {$\Sigma^\rho\subseteq N_\R^\rho$};
\draw[thick,->] (0.3,0.3,0.49) to [bend right=20] (-.05,-.05,.15);
\draw[line width=.4mm,->] (0,0,0) -- (.66,-.33,-.33);
\draw[line width=.4mm,->] (0,0,0) -- (-.33,.66,-.33); 
\draw[line width=.4mm,->] (0,0,0) -- (-.33,-.33,.66);
\end{tikzpicture}
\end{center}
Henceforth, we use the shorthand $\pi^\tau = \pr^\tau(\pi)$. 

Given any cone $\pi^\tau\in\Sigma^\tau$ with $\pi\in\N_\tau\Sigma$, we can also view $\pi^\tau$ as the projection of the larger cone $\sigma=\pi\cup\tau\in\N_\tau\Sigma$. Note that $\sigma$ is the unique maximal cone in $\N_\tau\Sigma$ that projects onto $\pi^\tau$, from which it follows that each cone in $\Sigma^\tau$ is the projection of a \emph{distinguished} cone in $\N_\tau\Sigma$. In other words, there is a bijection
\begin{align*}
\{\sigma\in\N_\tau\Sigma \mid \tau\preceq\sigma\}&\rightarrow\Sigma^\tau\\
\sigma&\mapsto \sigma^\tau.
\end{align*}
From the assumptions that $\Sigma$ is a simplicial $d$-fan, it follow that $\Sigma^\tau$ is a simplicial fan in $N^\tau_\R$ that is pure of dimension $d^\tau = d-\dim(\tau)$.  Moreover, the simplicial hypothesis on $\Sigma$ implies that each ray $\eta\in\Sigma^\tau(1)$ is the projection of a unique ray $\hat\eta\in\N_\tau\Sigma(1)$, and we can use this to mark each ray $\eta\in\Sigma^\tau(1)$ with the vector $\pr^\tau(u_{\hat\eta})$.

We now have a marked, pure, simplicial fan in $N_\R^\tau$, so it remains to define an inner product and pseudocubical value. The inner product $*^\tau\in\Inn(N_\R^\tau)$ is simply defined as the restriction of the inner product $*\in\Inn(N_\R)$ to the subspace $N_\R^\tau$. Lastly, given any $z\in\R^{\Sigma(1)}$, we define $z^\tau\in\R^{\Sigma^\tau(1)}$ by the rule
\[
z^\tau_{\eta}=z_{\hat\eta}-w_{\tau,*}(z)*u_{\hat\eta},
\]
where, as before, $\hat\eta\in\N_\tau\Sigma(1)$ is the unique ray with $\pr^\tau(\hat\eta)=\eta$. 

We now have all the ingredients necessary to state and prove the following result, which asserts that faces of normal complexes are, themselves, normal complexes.

\begin{proposition}\label{prop:facesarenormalcomplexes}
Let $\Sigma\subseteq N_\R$ be a simplicial $d$-fan, $*\in\Inn(N_\R)$ an inner product, and $\tau\in\Sigma$ a cone. If $z\in\R^{\Sigma(1)}$ is (pseudo)cubical with respect to $(\Sigma,*)$, then $z^\tau$ is (pseudo)cubical with respect to $(\Sigma^\tau,*^\tau)$ and
\[
\F^\tau(C_{\Sigma,*}(z))=C_{\Sigma^\tau,*^\tau}(z^\tau).
\]
\end{proposition}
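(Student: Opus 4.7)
The proof pivots on the identity
\[
w_{\pi,*}(z) = w_{\pi^\tau,*^\tau}(z^\tau) + w_{\tau,*}(z) \quad\text{whenever } \tau \preceq \pi,
\]
from which both conclusions will cascade. To establish it, I first verify that the difference $w_{\pi,*}(z) - w_{\tau,*}(z)$ is orthogonal to $u_\rho$ for each $\rho \in \tau(1)$ (both vectors have inner product $z_\rho$ with $u_\rho$), placing the difference in $N_\R^\tau$. Then for each $\eta \in \pi^\tau(1)$ with lift $\hat\eta \in \pi(1)$, I compute the inner product with $u_\eta$ by decomposing $u_{\hat\eta} = u_\eta + \pr_\tau(u_{\hat\eta})$ and using orthogonality with $N_{\tau,\R}$; this simplifies to $z_{\hat\eta} - w_{\tau,*}(z) * u_{\hat\eta} = z^\tau_\eta$, and uniqueness of the $w$-vector forces the identity.

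The (pseudo)cubicality of $z^\tau$ is then immediate: the identity rewrites $w_{\pi^\tau,*^\tau}(z^\tau) = \pr^\tau(w_{\pi,*}(z))$ for $\pi$ the distinguished lift of $\pi^\tau$, and since $\pr^\tau$ sends a nonnegative (resp.\ positive) simplicial combination of rays of $\pi$ to the corresponding combination of rays of $\pi^\tau$, the containment $w_{\pi,*}(z) \in \pi$ (resp.\ $\pi^\circ$) transfers to $w_{\pi^\tau,*^\tau}(z^\tau) \in \pi^\tau$ (resp.\ $(\pi^\tau)^\circ$).

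For the equality of complexes, since the bijection $\sigma \mapsto \sigma^\tau$ restricts to a bijection on maximal cones, it suffices to prove $\F^\tau(P_{\sigma,*}(z)) = P_{\sigma^\tau,*^\tau}(z^\tau)$ for each $\sigma \in \N_\tau\Sigma(d)$. For the forward inclusion, given $v \in \F_\tau(P_{\sigma,*}(z))$ and $u = v - w_{\tau,*}(z)$, the equality constraints for $\rho \in \tau(1)$ force $u \in N_\R^\tau$, so $u = \pr^\tau(v) \in \pr^\tau(\sigma) = \sigma^\tau$, and the truncating inequalities transfer via $u *^\tau u_\eta = u * u_{\hat\eta}$ (since $u \in N_\R^\tau$ is orthogonal to $\pr_\tau(u_{\hat\eta})$).

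The main obstacle is the reverse inclusion: given $u \in P_{\sigma^\tau,*^\tau}(z^\tau)$, I must verify that $u + w_{\tau,*}(z) \in \sigma$, which is subtle because the simplicial coefficients of $u + w_{\tau,*}(z)$ along rays of $\tau$ are not manifestly nonnegative. The plan to circumvent this is to invoke Proposition~\ref{prop:normalcomplexprelims}(2) to write $u = \sum_{\pi^\tau \preceq \sigma^\tau} \lambda_{\pi^\tau}\, w_{\pi^\tau,*^\tau}(z^\tau)$ as a convex combination of vertices, so that the key identity gives $u + w_{\tau,*}(z) = \sum_{\pi^\tau} \lambda_{\pi^\tau}\, w_{\pi,*}(z)$; each $w_{\pi,*}(z) \in \pi \subseteq \sigma$ by pseudocubicality, and convexity of $\sigma$ then completes the argument.
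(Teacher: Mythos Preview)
Your proof is correct and follows essentially the same strategy as the paper: both hinge on the identity $w_{\pi,*}(z)-w_{\tau,*}(z)=w_{\pi^\tau,*^\tau}(z^\tau)$ (the paper's Lemma~\ref{lemma:projectingws}), and both deduce (pseudo)cubicality of $z^\tau$ from it in the same way. The only cosmetic difference is in the polytope identification $\F^\tau(P_{\sigma,*}(z))=P_{\sigma^\tau,*^\tau}(z^\tau)$: the paper simply matches vertex sets (both equal $\{w_{\pi^\tau,*^\tau}(z^\tau)\mid \pi^\tau\preceq\sigma^\tau\}$ via the identity) and concludes, whereas you run a two-inclusion argument, doing the forward direction by inequality chasing and the reverse by writing $u$ as a convex combination of vertices. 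Your reverse step is effectively the vertex-matching argument in disguise; note that once you have $u+w_{\tau,*}(z)=\sum\lambda_{\pi^\tau}w_{\pi,*}(z)$, the point lies not just in $\sigma$ but in $\F_\tau(P_{\sigma,*}(z))$ itself, since each $w_{\pi,*}(z)$ with $\tau\preceq\pi\preceq\sigma$ is already a vertex of that face---so your forward inclusion is, strictly speaking, redundant.
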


We note that the first statement---that $z^\tau$ is (pseudo)cubical---is necessary in order for $C_{\Sigma^\tau,*^\tau}(z^\tau)$ to even be well-defined. Proposition~\ref{prop:facesarenormalcomplexes} is a statement about normal complexes, or equivalently, about the polytopes that comprise those complexes. In order to prove Proposition~\ref{prop:facesarenormalcomplexes}, we first prove the following key lemma, which concerns just the vertices of the polytopes.

\begin{lemma}\label{lemma:projectingws}
Let $\Sigma\subseteq N_\R$ be a simplicial $d$-fan, $*\in\Inn(N_\R)$ an inner product, and $\tau\in\Sigma$ a cone. For any $\sigma\in\Sigma$ with $\tau\preceq\sigma$, we have
\[
\pr^\tau(w_{\sigma,*}(z))=w_{\sigma,*}(z)-w_{\tau,*}(z)=w_{\sigma^\tau,*^\tau}(z^\tau).
\]
\end{lemma}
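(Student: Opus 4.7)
The plan is to prove both equalities by leveraging the uniqueness clause in the definition of the $w$-vectors: since $w_{\alpha,\star}(y)$ is characterized as the unique vector in $N_{\alpha,\R}$ satisfying a prescribed system of linear equations, it suffices in each case to exhibit a candidate vector that lies in the right ambient space and satisfies the right equations.

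For the first equality, I will rewrite it as $\pr_\tau(w_{\sigma,*}(z)) = w_{\tau,*}(z)$ by invoking the orthogonal decomposition $N_\R = N_{\tau,\R}\oplus N^\tau_\R$. The candidate $\pr_\tau(w_{\sigma,*}(z))$ lies in $N_{\tau,\R}$ by definition of the projection, so only the defining equations need checking. Since $\tau\preceq\sigma$, for every $\rho\in\tau(1)\subseteq\sigma(1)$ we have $u_\rho\in N_{\tau,\R}$, hence $u_\rho$ is orthogonal to $\pr^\tau(w_{\sigma,*}(z))$, giving
\[
\pr_\tau(w_{\sigma,*}(z))*u_\rho = w_{\sigma,*}(z)*u_\rho - \pr^\tau(w_{\sigma,*}(z))*u_\rho = z_\rho - 0 = z_\rho.
\]
Uniqueness then identifies $\pr_\tau(w_{\sigma,*}(z))$ with $w_{\tau,*}(z)$.

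For the second equality, I set $v\defeq w_{\sigma,*}(z)-w_{\tau,*}(z)$, which by the first equality coincides with $\pr^\tau(w_{\sigma,*}(z))$. Linearity of $\pr^\tau$ applied to $w_{\sigma,*}(z)\in N_{\sigma,\R}$, combined with the identity $\pr^\tau(N_{\sigma,\R}) = N_{\sigma^\tau,\R}$ (itself immediate from linearity since $\sigma^\tau=\pr^\tau(\sigma)$), places $v$ in the correct ambient linear span $N_{\sigma^\tau,\R}$. To verify the defining equations for $w_{\sigma^\tau,*^\tau}(z^\tau)$, I will work ray by ray: given $\eta\in\sigma^\tau(1)$ with distinguished lift $\hat\eta\in\sigma(1)\setminus\tau(1)$ and marked vector $u_\eta=\pr^\tau(u_{\hat\eta})$, the orthogonal splitting $u_{\hat\eta}=\pr_\tau(u_{\hat\eta})+u_\eta$ together with $v\in N_{\tau,\R}^\perp$ reduces $v*^\tau u_\eta$ to $v*u_{\hat\eta}$. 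Expanding and using the defining relation for $w_{\sigma,*}(z)$ on the ray $\hat\eta\in\sigma(1)$ gives
\[
v*u_{\hat\eta} = w_{\sigma,*}(z)*u_{\hat\eta} - w_{\tau,*}(z)*u_{\hat\eta} = z_{\hat\eta} - w_{\tau,*}(z)*u_{\hat\eta} = z^\tau_\eta,
\]
which is exactly the definition of $z^\tau_\eta$. Uniqueness then yields the second equality.

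I do not expect a genuine obstacle: the argument is essentially bookkeeping with orthogonal projections and the two uniqueness statements. The only subtlety is confirming at each stage that the candidate vector lands in the correct ambient subspace before invoking uniqueness---in particular, the containment $v\in N_{\sigma^\tau,\R}$, which rests on the identity $\pr^\tau(N_{\sigma,\R})=N_{\sigma^\tau,\R}$ noted above.
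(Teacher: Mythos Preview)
Your proposal is correct and follows essentially the same strategy as the paper: invoke the uniqueness characterization of the $w$-vectors after verifying membership in the appropriate subspace and the defining linear equations. The only cosmetic differences are that for the first equality the paper directly checks $(w_{\sigma,*}(z)-w_{\tau,*}(z))*u_\rho=0$ to land in $N_\R^\tau$, whereas you equivalently show $\pr_\tau(w_{\sigma,*}(z))$ satisfies the defining equations of $w_{\tau,*}(z)$; and for the containment $v\in N_{\sigma^\tau,\R}$ the paper writes $w_{\sigma,*}(z)$ explicitly in the basis $\{u_\rho\}$ and projects term by term, whereas you appeal to the identity $\pr^\tau(N_{\sigma,\R})=N_{\sigma^\tau,\R}$ directly.
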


\begin{proof}
We start by establishing the first equality. To do so, we begin by arguing that $w_{\sigma,*}(z)-w_{\tau,*}(z)\in N_\R^\tau$. Since $N_\R^\tau=N_{\tau,\R}^\perp$, it suffices to prove that $w_{\sigma,*}(z)-w_{\tau,*}(z)$ is orthogonal to the basis $\{u_\rho\mid \rho\in\tau(1)\}\subseteq N_{\tau,\R}$. By definition of the $w$ vectors and the assumption that $\tau\preceq\sigma$, we compute
\[
(w_{\sigma,*}(z)-w_{\tau,*}(z))*u_\rho=z_\rho-z_\rho=0\;\;\;\text{ for all }\;\;\;\rho\in\tau(1),
\]
from which it follows that $w_{\sigma,*}(z)-w_{\tau,*}(z)\in N_\R^\tau$. Since $N_\R=N_{\tau,\R}\oplus N_\R^\tau$, the orthogonal decomposition $w_{\sigma,*}(z)=w_{\tau,*}(z)\,+\,(w_{\sigma,*}(z)-w_{\tau,*}(z))$ then implies that 
\begin{equation}\label{eq:projectingvertices}
\pr_\tau(w_{\sigma,*}(z))=w_{\tau,*}(z)\;\;\;\text{ and }\;\;\;\pr^\tau(w_{\sigma,*}(z))=w_{\sigma,*}(z)-w_{\tau,*}(z).
\end{equation}

To prove that $w_{\sigma,*}(z)-w_{\tau,*}(z)=w_{\sigma^\tau,*^\tau}(z^\tau)$, we now argue that $w_{\sigma,*}(z)-w_{\tau,*}(z)$ is an element of $N_{\sigma^\tau,\R}$ and is a solution of the equations defining $w_{\sigma^\tau,*^\tau}(z^\tau)$:
\begin{equation}\label{eq:definingw2}
v*^\tau u_\eta=z_\eta^\tau\;\;\;\text{ for all }\;\;\;\eta\in\sigma^\tau(1).
\end{equation}
To check that $w_{\sigma,*}(z)-w_{\tau,*}(z)\in N_{\sigma^\tau,\R}$, we start by observing that we can write
\[
w_{\sigma,*}(z)=\sum_{\rho\in\sigma(1)}a_{\rho}\;u_\rho
\]
for some values $a_{\rho}\in\R$, in which case
\begin{align*}
w_{\sigma,*}(z)-w_{\tau,*}(z)&=\pr^\tau(w_{\sigma,*}(z))\\
&=\sum_{\rho\in\sigma(1)\setminus\tau(1)}a_{\rho}\;\pr^\tau(u_\rho)\\
&=\sum_{\eta\in\sigma^\tau(1)}a_{\hat\eta}\;u_{\eta},
\end{align*}
where the first equality uses \eqref{eq:projectingvertices}, the second uses that $\pr^\tau$ vanishes on $N_{\tau,\R}$, and the third uses that the rays of $\sigma^\tau$ are in natural bijection with $\sigma(1)\setminus\tau(1)$.
Lastly, we peel back the definitions to check that $w_{\sigma,*}(z)-w_{\tau,*}(z)$ is a solution of Equations~\eqref{eq:definingw2}:
\begin{align*}
(w_{\sigma,*}(z)-w_{\tau,*}(z))*^\tau u_\eta&= (w_{\sigma,*}(z)-w_{\tau,*}(z))*(u_{\hat\eta}-\pr_\tau(u_{\hat\eta}))\\
&=w_{\sigma,*}(z)*u_{\hat\eta}\;-\;w_{\tau,*}(z)*u_{\hat\eta}\;-\;\big(w_{\sigma,*}(z)-w_{\tau,*}(z)\big)*\pr_\tau(u_{\hat\eta})\\
&=z_{\hat\eta}-w_{\tau,*}(z)*u_{\hat\eta}\\
&=z_\eta^\tau,
\end{align*}
where the first equality uses the orthogonal decomposition of $u_{\hat\eta}$ and the fact that $*^\tau$ is just the restriction of $*$, the second equality uses linearity of the inner product, and the third equality uses the definition of $w_{\sigma,*}(z)$ along with the fact that the vectors $\pr_\tau(u_{\hat\eta})$ and $w_{\sigma,*}(z)-w_{\tau,*}(z)=\pr^\tau(w_{\sigma,*}(z))$ are in orthogonal subspaces.
\end{proof}

\begin{proof}[Proof of Proposition~\ref{prop:facesarenormalcomplexes}]
To prove the first statement in the cubical setting, assume that $z\in\R^{\Sigma(1)}$ is cubical. This means that, for every $\sigma\in\Sigma$, we can write
\[
w_{\sigma,*}(z)=\sum_{\rho\in\sigma(1)}a_{\rho} u_\rho
\]
for some positive values $a_{\rho}\in\R_{>0}$. Consider any cone of $\Sigma^\tau$, which we can write as $\sigma^\tau$ with $\tau\preceq\sigma$. Applying the lemma, we then see that
\begin{align*}
w_{\sigma^\tau,*^\tau}(z^\tau)&=\pr^\tau(w_{\sigma,*}(z))\\
&=\sum_{\rho\in\sigma(1)\setminus\tau(1)}a_{\rho}\; \pr^\tau(u_\rho)\\
&=\sum_{\eta\in\sigma^\tau(1)}a_{\hat\eta}\; u_\eta.
\end{align*}
This shows that $w_{\sigma^\tau,*^\tau}(z^\tau)$ can be written as a positive combination of the ray generators of $\sigma^\tau$, proving that $z^\tau\in\Cub(\Sigma^\tau,*^\tau)$. The proof  in the pseudocubical setting is identical but with ``positive'' replaced by ``nonnegative.''

To prove that
\[
\F^\tau(C_{\Sigma,*}(z))=C_{\Sigma^\tau,*^\tau}(z^\tau),
\]
it suffices to identify the maximal polytopes in these complexes. In other words, we must prove that, for every $\sigma\in\N_\tau\Sigma(d)$, we have
\begin{equation}\label{eq:polytranslate}
\F^\tau(P_{\sigma,*}(z))=P_{\sigma^\tau,*^\tau}(z^\tau).
\end{equation}
To prove \eqref{eq:polytranslate}, we analyze the vertices of these polytopes.

By Proposition~\ref{prop:normalcomplexprelims}, the vertices of $P_{\sigma,*}(z)$ are $\{w_{\pi,*}(z)\mid \pi\preceq\sigma\}$. Since 
\[
F_\tau(P_{\sigma,*}(z))=P_{\sigma,*}(z)\cap\bigcap_{\rho\in\tau(1)}H_{\rho,*}(z),
\] 
it follows that the vertices of $F_\tau(P_{\sigma,*}(z))$ are 
\[
\{w_{\pi,*}(z)\mid \pi\preceq\sigma\text{ and }w_{\pi,*}(z)*u_\rho=z_\rho\text{ for all }\rho\in\tau(1)\}.
\]
If a cone $\pi\preceq \sigma$ satisfies $w_{\pi,*}(z)*u_\rho=z_\rho$ for all $\rho\in\tau(1)$, then the definition of the $w$-vectors implies that $w_{\pi,*}(z)=w_{\pi\cup\tau,*}(z)$, and it follows that the vertices of $F_\tau(P_{\sigma,*}(z))$ are
\[
\mathrm{Vert}\big(F_\tau(P_{\sigma,*}(z))\big)=\{w_{\pi,*}(z)\mid \tau\preceq\pi\preceq\sigma\}.
\]
Upon translating by $w_{\tau,*}(z)$ to get from $F_\tau(P_{\sigma,*}(z))$ to $F^\tau(P_{\sigma,*}(z))$, we see that
\begin{align*}
\mathrm{Vert}\big(F^\tau(P_{\sigma,*}(z))\big)&=\{w_{\pi,*}(z)-w_{\tau,*}(z)\mid \tau\preceq\pi\preceq\sigma\}\\
&=\{w_{\pi^\tau,*^\tau}(z^\tau)\mid \pi^\tau\preceq\sigma^\tau\}\\
&=\mathrm{Vert}\big(P_{\sigma^\tau,*^\tau}(z^\tau))\big),
\end{align*}
where the second equality is an application of Lemma~\ref{lemma:projectingws} and the third is an application of Proposition~\ref{prop:normalcomplexprelims}. Having matched the vertices of the polytopes in \eqref{eq:polytranslate}, the equality of polytopes then follows.
\end{proof}

The importance of Proposition~\ref{prop:facesarenormalcomplexes} is that it allows us to endow each of the faces of a normal complex with the structure of a normal complex, and in particular, it then allows us to compute (mixed) volumes of faces. More specifically, if $\omega:\Sigma(d)\rightarrow\R_{>0}$ is a weight function, then we obtain a weight function $\omega^\tau:\Sigma^\tau(d^\tau)\rightarrow\R_{>0}$ defined by $\omega^\tau(\sigma^\tau)=\omega(\sigma)$ for all $\sigma\in\Sigma(d)$. The volume of the face $\F^\tau(C_{\Sigma,*}(z))$ weighted by $\omega$ is
\[
\Vol_{\Sigma^\tau,\omega^\tau,*^\tau}(z^\tau).
\]
Similarly, the mixed volume of the faces $\F^\tau(C_{\Sigma,*}(z_1)),\dots \F^\tau(C_{\Sigma,*}(z_{d^\tau}))$ weighted by $\omega$ is
\[
\MVol_{\Sigma^\tau,\omega^\tau,*^\tau}(z_1^\tau,\dots,z_{d^\tau}^\tau).
\]
In the next two subsections, we use these concepts to prove fundamental results relating (mixed) volumes of normal complexes to the (mixed) volumes of their facets. In making arguments using mixed volumes, it will be useful to consider facets of facets; as such, the next result---asserting that the face of a face of a normal complex is a face of the original normal complex---will be useful.

\begin{proposition}\label{prop:faceofaface}
Let $\Sigma\subseteq N_\R$ be a simplicial $d$-fan, $*\in\Inn(N_\R)$ an inner product, and $z\in\oCub(\Sigma,*)$ a pseudocubical value. If $\tau,\pi\in\Sigma$ with $\tau\preceq\pi$, then
\[
\F^{\pi^\tau}(\F^\tau(C_{\Sigma,*}(z)))=\F^\pi(C_{\Sigma,*}(z)).
\]
\end{proposition}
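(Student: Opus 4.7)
The plan is to apply Proposition~\ref{prop:facesarenormalcomplexes} twice and then verify that the two presentations of the right-hand side as a normal complex agree on the level of data. Specifically, by the first application, $\F^\tau(C_{\Sigma,*}(z))=C_{\Sigma^\tau,*^\tau}(z^\tau)$, where $z^\tau$ is pseudocubical. Since $\pi^\tau\in\Sigma^\tau$, a second application gives
\[
\F^{\pi^\tau}\big(\F^\tau(C_{\Sigma,*}(z))\big)=C_{(\Sigma^\tau)^{\pi^\tau},(*^\tau)^{\pi^\tau}}\big((z^\tau)^{\pi^\tau}\big),
\]
while the right-hand side of the claim is $C_{\Sigma^\pi,*^\pi}(z^\pi)$. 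It therefore suffices to check the three identifications: (a) $(\Sigma^\tau)^{\pi^\tau}=\Sigma^\pi$ as marked simplicial fans in a common ambient space, (b) $(*^\tau)^{\pi^\tau}=*^\pi$, and (c) $(z^\tau)^{\pi^\tau}=z^\pi$.

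For (a), the key observation is that $\tau\preceq\pi$ gives an orthogonal decomposition $N_{\pi,\R}=N_{\tau,\R}\oplus N_{\pi^\tau,\R}$ inside $N_\R$, so taking orthogonal complements yields $N_\R^\pi=(N_\R^\tau)^{\pi^\tau}$ and, correspondingly, $\pr^{\pi^\tau}\circ\pr^\tau=\pr^\pi$. The composition of bijections
\[
\{\sigma\in\N_\pi\Sigma\mid\pi\preceq\sigma\}\;\longleftrightarrow\;\{\sigma^\tau\in\N_{\pi^\tau}\Sigma^\tau\mid\pi^\tau\preceq\sigma^\tau\}\;\longleftrightarrow\;(\Sigma^\tau)^{\pi^\tau},
\]
coming from the two star constructions, matches the single bijection defining $\Sigma^\pi$, and since each ray of a star fan is marked by the projection of the corresponding distinguished ray in $\Sigma$, the markings are preserved by $\pr^{\pi^\tau}\circ\pr^\tau=\pr^\pi$. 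Claim (b) is immediate because both inner products are, by construction, the restriction of $*$ to the common subspace $N_\R^\pi=(N_\R^\tau)^{\pi^\tau}$.

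The main technical content is (c). For a ray $\eta\in\Sigma^\pi(1)$, let $\hat\eta\in\N_\pi\Sigma(1)$ be its lift, and let $\hat\eta_1=\pr^\tau(\hat\eta)\in\Sigma^\tau(1)$, so that $\pr^{\pi^\tau}(\hat\eta_1)=\eta$. By definition,
\[
(z^\tau)^{\pi^\tau}_\eta=z^\tau_{\hat\eta_1}-w_{\pi^\tau,*^\tau}(z^\tau)*^\tau u_{\hat\eta_1}=\big(z_{\hat\eta}-w_{\tau,*}(z)*u_{\hat\eta}\big)-w_{\pi^\tau,*^\tau}(z^\tau)*\pr^\tau(u_{\hat\eta}).
\]
Lemma~\ref{lemma:projectingws} identifies $w_{\pi^\tau,*^\tau}(z^\tau)=w_{\pi,*}(z)-w_{\tau,*}(z)\in N_\R^\tau$, and since this vector lies in $N_\R^\tau=N_{\tau,\R}^\perp$ it is orthogonal to $\pr_\tau(u_{\hat\eta})$, so $(w_{\pi,*}(z)-w_{\tau,*}(z))*\pr^\tau(u_{\hat\eta})=(w_{\pi,*}(z)-w_{\tau,*}(z))*u_{\hat\eta}$. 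Substituting and cancelling the $w_{\tau,*}(z)*u_{\hat\eta}$ terms yields $z_{\hat\eta}-w_{\pi,*}(z)*u_{\hat\eta}=z^\pi_\eta$.

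The main obstacle is purely one of bookkeeping: tracking the three-fold correspondence among rays of $\N_\pi\Sigma$, $\N_{\pi^\tau}\Sigma^\tau$, and $\Sigma^\pi$, and keeping straight which projections and inner products are being used. All the actual computation is absorbed into Lemma~\ref{lemma:projectingws} and the orthogonality of the decomposition $N_\R=N_{\tau,\R}\oplus N_\R^\tau$.
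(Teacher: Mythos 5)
Your proof is correct, and it takes a somewhat different route from the paper. The paper reduces the claim (via Proposition~\ref{prop:facesarenormalcomplexes}) to comparing the maximal polytopes $\F^{\pi^\tau}(P_{\sigma^\tau,*^\tau}(z^\tau))$ and $P_{\sigma^\pi,*^\pi}(z^\pi)$, and matches them vertex-by-vertex: it shows $w_{\mu^\tau,*^\tau}(z^\tau)-w_{\pi^\tau,*^\tau}(z^\tau)=w_{\mu^\pi,*^\pi}(z^\pi)$ by iterating Lemma~\ref{lemma:projectingws} and using $\pr^\pi=\pr^{\pi^\tau}\circ\pr^\tau$. You instead apply Proposition~\ref{prop:facesarenormalcomplexes} to \emph{both} sides and verify that the normal-complex data compose: $(\Sigma^\tau)^{\pi^\tau}=\Sigma^\pi$ as marked fans, $(*^\tau)^{\pi^\tau}=*^\pi$, and $(z^\tau)^{\pi^\tau}=z^\pi$. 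Your computation of $(z^\tau)^{\pi^\tau}_\eta$ via Lemma~\ref{lemma:projectingws} and the orthogonality $(w_{\pi,*}(z)-w_{\tau,*}(z))\perp N_{\tau,\R}$ is clean and correct, and the markings and inner products match because $\pr^\pi=\pr^{\pi^\tau}\circ\pr^\tau$ and restriction of restriction is restriction. Both proofs hinge on the same two ingredients (Lemma~\ref{lemma:projectingws} and the projection decomposition); yours is slightly longer because it verifies the fan and inner-product identifications explicitly, but it establishes a somewhat stronger and more reusable statement, namely that the entire triple $(\Sigma^\tau,*^\tau,z^\tau)$ iterates coherently under the star construction, not just that the resulting polytopal complexes agree.
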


\begin{proof}
By Proposition~\ref{prop:facesarenormalcomplexes}, the claim in this proposition is equivalent to
\[
\F^{\pi^\tau}(C_{\Sigma^\tau,*^\tau}(z^\tau))=C_{\Sigma^\pi,*^\pi}(z^\pi).
\]
It suffices to match the maximal polytopes in these complexes, so we must prove:
\begin{equation}\label{eq:faceofafacepoly}
\F^{\pi^\tau}(P_{\sigma^\tau,*^\tau}(z^\tau))=P_{\sigma^\pi,*^\pi}(z^\pi)\;\;\;\text{ for all }\;\;\;\sigma\in\Sigma(d)\;\;\;\text{ with }\;\;\;\tau\preceq\sigma.
\end{equation}
The vertices of the polytope in the left-hand side of \eqref{eq:faceofafacepoly} are
\[
\{w_{\mu^\tau,*^\tau}(z^\tau)-w_{\pi^\tau,*^\tau}(z^\tau)\mid \pi^\tau\preceq\mu^\tau\preceq \sigma^\tau \}
\]
while the vertices in the right-hand side of \eqref{eq:faceofafacepoly} are
\[
\{w_{\mu^\pi,*^\pi}(z^\pi)\mid \mu^\pi\preceq\sigma^\pi\}.
\]
Notice that both sets of vertices are indexed by $\mu\in\Sigma$ with $\pi\preceq\mu\preceq\sigma$, and we have
\begin{align*}
w_{\mu^\tau,*^\tau}(z^\tau)-w_{\pi^\tau,*^\tau}(z^\tau)&=\pr^{\pi^\tau}(w_{\mu^\tau,*^\tau}(z^\tau))\\
&=\pr^{\pi^\tau}(\pr^\tau(w_{\mu,*}(z)))\\
&=\pr^\pi(w_{\mu,*}(z))\\
&=w_{\mu^\pi,*^\pi}(z^\pi),
\end{align*}
where the first, second, and fourth equalities are Lemma~\ref{lemma:projectingws}, while the second is the observation that the projection $\pr^\pi$ can be broken up into two steps: $\pr^\pi=\pr^{\pi^\tau}\circ\pr^\tau$. Thus, the vertices of the polytopes in \eqref{eq:faceofafacepoly} match up, and the proposition follows.
\end{proof}

\subsection{Volumes and facets}

This subsection is devoted to proving the following result, which relates the volume of a normal complex to the volumes of its facets.

\begin{proposition}\label{prop:pyramidvolume}
Let $\Sigma\subseteq N_\R$ be a simplicial $d$-fan with weight function $\omega:\Sigma(d)\rightarrow\R_{>0}$,  let $*\in\Inn(N_\R)$ be an inner product, and let $z\in\oCub(\Sigma,*)$ be a pseudocubical value. Then
\[
\Vol_{\Sigma,\omega,*}(z)=\sum_{\rho\in\Sigma(1)}z_\rho\Vol_{\Sigma^\rho,\omega^\rho,*^\rho}(z^\rho).
\]
\end{proposition}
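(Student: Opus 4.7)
The plan is to reduce the identity to a single-cone identity and then establish that identity via the classical pyramid volume formula, carefully converting between Euclidean and lattice volumes.

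First, I would expand the right-hand side by definition, using the bijection $\{\sigma \in \Sigma(d) : \rho \preceq \sigma\} \leftrightarrow \Sigma^\rho(d-1)$ together with $\omega^\rho(\sigma^\rho) = \omega(\sigma)$ to write
\[
\Vol_{\Sigma^\rho,\omega^\rho,*^\rho}(z^\rho) \;=\; \sum_{\sigma \in \Sigma(d),\, \rho \preceq \sigma} \omega(\sigma)\,\Vol_{\sigma^\rho}\bigl(P_{\sigma^\rho,*^\rho}(z^\rho)\bigr).
\]
Swapping the order of summation, the proposition reduces to proving, for each $\sigma \in \Sigma(d)$, the per-cone identity
\[
\Vol_\sigma(P_{\sigma,*}(z)) \;=\; \sum_{\rho \in \sigma(1)} z_\rho\, \Vol_{\sigma^\rho}\bigl(P_{\sigma^\rho,*^\rho}(z^\rho)\bigr).
\]
By Proposition~\ref{prop:facesarenormalcomplexes}, $P_{\sigma^\rho,*^\rho}(z^\rho) = \F^\rho(P_{\sigma,*}(z))$, a translate of the facet $\F_\rho(P_{\sigma,*}(z))$ of $P_{\sigma,*}(z)$.

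Next I would apply the classical pyramid decomposition of $P_{\sigma,*}(z)$ with apex at the origin, which is a vertex of $P_{\sigma,*}(z)$ by Proposition~\ref{prop:normalcomplexprelims}. Each facet of $P_{\sigma,*}(z)$ either lies in a truncating hyperplane $H_{\rho,*}(z)$ (producing $\F_\rho(P_{\sigma,*}(z))$ for $\rho \in \sigma(1)$) or is contained in a codimension-one face of the cone $\sigma$ and hence passes through the origin. Only the former contribute, and the distance from $0$ to $H_{\rho,*}(z)$ equals $z_\rho/\|u_\rho\|_*$. Since translation preserves Euclidean volume, the pyramid formula yields
\[
\Vol^{\mathrm{Euc}}_\sigma(P_{\sigma,*}(z)) \;=\; \sum_{\rho \in \sigma(1)} \frac{z_\rho}{d\,\|u_\rho\|_*}\,\Vol^{\mathrm{Euc}}_{\sigma^\rho}\bigl(\F^\rho(P_{\sigma,*}(z))\bigr).
\]

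Finally, I would convert to lattice volumes. Writing $\Lambda_\sigma$ for the Euclidean volume of the parallelepiped on $\{u_\rho : \rho \in \sigma(1)\}$, the dual-basis description of $M_\sigma$ inside $(N_{\sigma,\R},*)$ gives $\Vol_\sigma = d!\,\Lambda_\sigma\,\Vol^{\mathrm{Euc}}_\sigma$, and similarly $\Vol_{\sigma^\rho} = (d-1)!\,\Lambda_{\sigma^\rho}\,\Vol^{\mathrm{Euc}}_{\sigma^\rho}$. The key geometric identity is
\[
\Lambda_\sigma \;=\; \|u_\rho\|_* \cdot \Lambda_{\sigma^\rho},
\]
which I would prove by changing basis from $\{u_{\rho'} : \rho' \in \sigma(1)\}$ to $\{\pr^\rho(u_{\rho'}) : \rho' \neq \rho\} \cup \{u_\rho\}$: the transition matrix is unipotent triangular (determinant one), and the new basis splits orthogonally between $N_{\sigma^\rho,\R}$ and $\R u_\rho$, so the Euclidean volumes multiply. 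Substituting these normalizations into the Euclidean pyramid identity, the combined factor $d!\,\Lambda_\sigma / (d\,\|u_\rho\|_*)$ collapses to $(d-1)!\,\Lambda_{\sigma^\rho}$, producing exactly $\sum_{\rho \in \sigma(1)} z_\rho\,\Vol_{\sigma^\rho}(P_{\sigma^\rho,*^\rho}(z^\rho))$ and completing the proof. The main obstacle is bookkeeping the lattice normalizations; the underlying geometric content is just the classical pyramid formula together with the orthogonality of $u_\rho$ to $N_{\sigma^\rho,\R}$.
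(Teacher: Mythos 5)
Your proof is correct, and it takes the same pyramid decomposition as the paper's argument (decompose $P_{\sigma,*}(z)$ into pyramids with apex at the origin over the facets $\F_\rho(P_{\sigma,*}(z))$), but it handles the key per-pyramid scaling step by a genuinely different route. The paper packages the scaling into Lemma~\ref{lemma:pyramidvolume}, which is stated and proved entirely in terms of the lattice-normalized volumes $\Vol_\sigma$ and $\Vol_{\sigma^\rho}$: the proof constructs dual-basis simplices $\Delta(\sigma)$, $\Delta(\sigma^\rho)$ of unit lattice volume, uses the observation that the $*$-dual vector of $\pr^\rho(u_\eta)$ in $M_{\sigma^\rho}$ coincides with the $*$-dual vector of $u_\eta$ in $M_\sigma$, and then tracks the effect of projecting and rescaling a single vertex. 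You instead go through Euclidean volumes: you use the classical ``$\frac{1}{d}\cdot$base$\cdot$height'' formula with height $z_\rho/\|u_\rho\|_*$, then convert via $\Vol_\sigma = d!\,\Lambda_\sigma\,\Vol^{\mathrm{Euc}}_\sigma$ and $\Vol_{\sigma^\rho} = (d-1)!\,\Lambda_{\sigma^\rho}\,\Vol^{\mathrm{Euc}}_{\sigma^\rho}$, and cancel using the Gram-determinant identity $\Lambda_\sigma = \|u_\rho\|_*\,\Lambda_{\sigma^\rho}$ (unipotent change of basis to $\{\pr^\rho(u_{\rho'})\}\cup\{u_\rho\}$ followed by orthogonal splitting). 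The arithmetic works out exactly: $\frac{d!\,\Lambda_\sigma}{d\,\|u_\rho\|_*} = (d-1)!\,\Lambda_{\sigma^\rho}$, so the normalization constants cancel and you recover the statement. The trade-off is that your route makes all the normalizations explicit and thus feels more elementary and self-contained, at the cost of introducing Euclidean volume as an intermediary and requiring care with the two different lattice normalizations in adjacent dimensions; the paper's lemma, by working natively in lattice volume, avoids that bookkeeping but leans on the slightly slicker dual-basis observation.
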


The sum in the right-hand side of the theorem corresponds to decomposing the normal complex into pyramids over its facets, as depicted in the next image.

\begin{center}
\tdplotsetmaincoords{68}{55}
\begin{tikzpicture}[scale=1.1,tdplot_main_coords]
\draw[draw=green!20, fill=green!20, fill opacity=.8] (0,0,0) -- (0, 1.6, 0) -- (-1.6, 1.6, -1.6) -- (-1.6, 0, -1.6) -- cycle; 
\draw[] (0,0,0) -- (0, 1.6, 0) -- (-1.6, 1.6, -1.6) -- (-1.6, 0, -1.6) -- cycle; 
\draw[draw=green!20, fill=green!20, fill opacity=.8] (0,0,0) -- (0, 1.6, 0) -- (1, 1.6, 1) -- (1.2, 1.2, 1.2) -- cycle; 
\draw[] (0,0,0) -- (0, 1.6, 0) -- (1, 1.6, 1) -- (1.2, 1.2, 1.2) -- cycle; 
\draw[draw=green!20, fill=green!20, fill opacity=.8] (0,0,0) -- (0, 0, 1.6) -- (1, 1, 1.6) -- (1.2, 1.2, 1.2) -- cycle;
\draw[] (0,0,0) -- (0, 0, 1.6) -- (1, 1, 1.6) -- (1.2, 1.2, 1.2) -- cycle;
\draw[draw=green!20, fill=green!20, fill opacity=.8] (0,0,0) -- (1.6, 0, 0) -- (1.6, 1, 1) -- (1.2, 1.2, 1.2) -- cycle;
\draw[] (0,0,0) -- (1.6, 0, 0) -- (1.6, 1, 1) -- (1.2, 1.2, 1.2) -- cycle;
\draw[draw=green!20, fill=green!20, fill opacity=.8] (0,0,0) -- (0, 0, 1.6) -- (-1.6, -1.6, 1.6) -- (-1.6, -1.6, 0) -- cycle; 
\draw[] (0,0,0) -- (0, 0, 1.6) -- (-1.6, -1.6, 1.6) -- (-1.6, -1.6, 0) -- cycle; 
\draw[draw=green!20, fill=green!20, fill opacity=.8] (0,0,0) -- (1.6, 0, 0) -- (1.6, -1.6, -1.6) -- (0, -1.6, -1.6) -- cycle;
\draw[]  (0,0,0) -- (1.6, 0, 0) -- (1.6, -1.6, -1.6) -- (0, -1.6, -1.6) -- cycle;
\draw[draw=green!20, fill=green!20, fill opacity=.8] (0,0,0) -- (-1.6, -1.6, 0) -- (-1.6, -1.6, -.4) -- (-1.2, -1.2, -1.2) -- cycle;
\draw[] (0,0,0) -- (-1.6, -1.6, 0) -- (-1.6, -1.6, -.4) -- (-1.2, -1.2, -1.2) -- cycle;
\draw[draw=green!20, fill=green!20, fill opacity=.8] (0,0,0) -- (-1.6, 0, -1.6) -- (-1.6, -.4, -1.6) -- (-1.2, -1.2, -1.2) -- cycle; 
\draw[] (0,0,0) -- (-1.6, 0, -1.6) -- (-1.6, -.4, -1.6) -- (-1.2, -1.2, -1.2) -- cycle; 
\draw[draw=green!20, fill=green!20, fill opacity=.8] (0,0,0) -- (0, -1.6, -1.6) -- (-.4, -1.6, -1.6) -- (-1.2, -1.2, -1.2) -- cycle; 
\draw[] (0,0,0) -- (0, -1.6, -1.6) -- (-.4, -1.6, -1.6) -- (-1.2, -1.2, -1.2) -- cycle; 
\end{tikzpicture}
\hspace{50bp}
\begin{tikzpicture}[scale=.85,tdplot_main_coords]

\draw[draw=green!20, fill=green!20, fill opacity=.8] (0,0.5,0) -- (0, 2.1, 0) -- (-1.6, 2.1, -1.6) -- cycle; 
\draw[] (0,0.5,0) -- (0, 2.1, 0) -- (-1.6, 2.1, -1.6) -- cycle; 
\draw[draw=green!20, fill=green!20, fill opacity=.8] (0,0.5,0) -- (0, 2.1, 0) -- (1, 2.1, 1) -- cycle; 
\draw[] (0,0.5,0) -- (0, 2.1, 0) -- (1, 2.1, 1) -- cycle; 

\draw[draw=green!20, fill=green!20, fill opacity=.8] (.5,.5,.5) -- (1.5, 1.5, 2.1) -- (1.7, 1.7, 1.7) -- cycle; 
\draw[] (0.5,0.5,0.5) -- (1.5, 1.5, 2.1) -- (1.7, 1.7, 1.7) -- cycle; 
\draw[draw=green!20, fill=green!20, fill opacity=.8] (0.5,0.5,0.5) -- (1.5, 2.1, 1.5) -- (1.7, 1.7, 1.7) -- cycle; 
\draw[] (0.5,0.5,0.5) -- (1.5, 2.1, 1.5) -- (1.7, 1.7, 1.7) -- cycle; 
\draw[draw=green!20, fill=green!20, fill opacity=.8] (0.5,0.5,0.5)  -- (2.1, 1.5, 1.5) -- (1.7, 1.7, 1.7) -- cycle; 
\draw[] (0.5,0.5,0.5)  -- (2.1, 1.5, 1.5) -- (1.7, 1.7, 1.7) -- cycle; 

\draw[draw=green!20, fill=green!20, fill opacity=.8] (0,0,0.5) -- (0, 0, 2.1) -- (1, 1, 2.1)  -- cycle; 
\draw[] (0,0,0.5) -- (0, 0, 2.1) -- (1, 1, 2.1)  -- cycle; 
\draw[draw=green!20, fill=green!20, fill opacity=.8] (0,0,0.5) -- (0, 0, 2.1) -- (-1.6, -1.6, 2.1) -- cycle; 
\draw[] (0,0,0.5) -- (0, 0, 2.1) -- (-1.6, -1.6, 2.1) -- cycle; 

\draw[draw=green!20, fill=green!20, fill opacity=.8] (0.5,0,0) -- (2.1, 0, 0) -- (2.1, 1, 1) -- cycle; 
\draw[] (0.5,0,0) -- (2.1, 0, 0) -- (2.1, 1, 1) -- cycle; 
\draw[draw=green!20, fill=green!20, fill opacity=.8] (0.5,0,0) -- (2.1, 0, 0) -- (2.1, -1.6, -1.6) -- cycle; 
\draw[] (0.5,0,0) -- (2.1, 0, 0) -- (2.1, -1.6, -1.6) -- cycle; 

\draw[draw=green!20, fill=green!20, fill opacity=.8] (-0.5,-0.5,0) -- (-2.1, -2.1, 1.6) -- (-2.1, -2.1, 0) -- cycle; 
\draw[] (-0.5,-0.5,0) -- (-2.1, -2.1, 1.6) -- (-2.1, -2.1, 0) -- cycle; 
\draw[draw=green!20, fill=green!20, fill opacity=.8] (-0.5,-0.5,0) -- (-2.1, -2.1, 0) -- (-2.1, -2.1, -.4) -- cycle; 
\draw[] (-0.5,-0.5,0) -- (-2.1, -2.1, 0) -- (-2.1, -2.1, -.4) -- cycle; 

\draw[draw=green!20, fill=green!20, fill opacity=.8] (-0.5,-0.5,-0.5) -- (-2.1, -2.1, -.9) -- (-1.7, -1.7, -1.7) -- cycle; 
\draw[] (-0.5,-0.5,-0.5) -- (-2.1, -2.1, -.9) -- (-1.7, -1.7, -1.7) -- cycle; 
\draw[draw=green!20, fill=green!20, fill opacity=.8] (-0.5,-0.5,-0.5) -- (-2.1, -.9, -2.1) -- (-1.7, -1.7, -1.7) -- cycle; 
\draw[] (-0.5,-0.5,-0.5) -- (-2.1, -.9, -2.1) -- (-1.7, -1.7, -1.7) -- cycle; 
\draw[draw=green!20, fill=green!20, fill opacity=.8] (-0.5,-0.5,-0.5) -- (-.9, -2.1, -2.1) -- (-1.7, -1.7, -1.7) -- cycle; 
\draw[] (-0.5,-0.5,-0.5) -- (-.9, -2.1, -2.1) -- (-1.7, -1.7, -1.7) -- cycle; 

\draw[draw=green!20, fill=green!20, fill opacity=.8] (-0.5,0,-0.5) -- (-2.1, 1.6, -2.1) -- (-2.1, 0, -2.1) -- cycle; 
\draw[] (-0.5,0,-0.5) -- (-2.1, 1.6, -2.1) -- (-2.1, 0, -2.1) -- cycle; 
\draw[draw=green!20, fill=green!20, fill opacity=.8] (-0.5,0,-0.5) -- (-2.1, 0, -2.1) -- (-2.1, -.4, -2.1) -- cycle; 
\draw[] (-0.5,0,-0.5) -- (-2.1, 0, -2.1) -- (-2.1, -.4, -2.1) -- cycle;

\draw[draw=green!20, fill=green!20, fill opacity=.8] (0,-0.5,-0.5) -- (1.6, -2.1, -2.1) -- (0, -2.1, -2.1) -- cycle; 
\draw[] (0,-0.5,-0.5) -- (1.6, -2.1, -2.1) -- (0, -2.1, -2.1) -- cycle; 
\draw[draw=green!20, fill=green!20, fill opacity=.8] (0,-0.5,-0.5) -- (0, -2.1, -2.1) -- (-.4, -2.1, -2.1) --  cycle; 
\draw[] (0,-0.5,-0.5) -- (0, -2.1, -2.1) -- (-.4, -2.1, -2.1) --  cycle; 
\end{tikzpicture}
\end{center}

Proposition~\ref{prop:pyramidvolume} follows from the following lemma relating the volume function $\Vol_\sigma$ on $N_{\sigma,\R}$ to the volume function $\Vol_{\sigma^\rho}$ on the hyperplane $N_{\sigma^\rho,\R}\subseteq N_{\sigma,\R}$.

\begin{lemma}\label{lemma:pyramidvolume}
Under the hypotheses of Proposition~\ref{prop:pyramidvolume}, let $\sigma\in\Sigma(d)$ and $\rho\in\sigma(1)$. For any polytope $P\subseteq N_{\sigma^\rho,\R}$ and $a\in\R_{\geq 0}$, we have
\[
\Vol_\sigma\big(\conv(0,P+au_\rho )\big)=a(u_\rho*u_\rho)\cdot\Vol_{\sigma^\rho}(P).
\]
\end{lemma}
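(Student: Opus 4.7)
The plan is to reduce the identity to the Euclidean pyramid formula and then track how the lattice normalizations of $\Vol_\sigma$ and $\Vol_{\sigma^\rho}$ relate. For any rank-$k$ lattice $L$ in a $k$-dimensional inner-product space, a fundamental simplex has Euclidean volume $\mathrm{covol}(L)/k!$, so
\[
\Vol_\sigma = \frac{d!}{\mathrm{covol}(M_\sigma)}\,\mathrm{vol}_d\qquad\text{and}\qquad \Vol_{\sigma^\rho} = \frac{(d-1)!}{\mathrm{covol}(M_{\sigma^\rho})}\,\mathrm{vol}_{d-1},
\]
where $\mathrm{vol}_d$ and $\mathrm{vol}_{d-1}$ denote the Euclidean volumes induced by $*$ on $N_{\sigma,\R}$ and on $N_{\sigma^\rho,\R}$, respectively.

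Next, since $N_{\sigma^\rho,\R}\subseteq N_\R^\rho = u_\rho^\perp$ by construction, the translate $P+au_\rho$ lies in the affine hyperplane $N_{\sigma^\rho,\R}+au_\rho$ at Euclidean distance $a\sqrt{u_\rho*u_\rho}$ from the origin, and the classical pyramid formula yields
\[
\mathrm{vol}_d\bigl(\conv(0,P+au_\rho)\bigr) = \frac{a\sqrt{u_\rho*u_\rho}}{d}\,\mathrm{vol}_{d-1}(P).
\]

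It then remains to match the covolumes, i.e., to show $\mathrm{covol}(M_{\sigma^\rho}) = \sqrt{u_\rho*u_\rho}\cdot\mathrm{covol}(M_\sigma)$. By the standard duality $\mathrm{covol}(M_\sigma)\mathrm{covol}(N_\sigma)=1$ (and similarly for $\sigma^\rho$), this reduces to the Gram-determinant identity $\mathrm{covol}(N_\sigma)^2 = (u_\rho*u_\rho)\,\mathrm{covol}(N_{\sigma^\rho})^2$. Taking the ordered basis $\{u_{\rho'}:\rho'\in\sigma(1)\setminus\{\rho\}\}\cup\{u_\rho\}$ for $N_\sigma$ and decomposing each $u_{\rho'}$ orthogonally as $\pr^\rho(u_{\rho'})+\pr_\rho(u_{\rho'})$ (using $N_{\sigma,\R}=N_{\rho,\R}\oplus N_{\sigma^\rho,\R}$), the Gram matrix of $N_\sigma$ takes a bordered block form whose diagonal block is the Gram matrix of $\{\pr^\rho(u_{\rho'})\}$ plus a rank-one correction, bordered by the $u_\rho$-column and the entry $u_\rho*u_\rho$. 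A rank-one Schur complement with respect to the last row and column immediately collapses the rank-one correction and yields $\det(G_\sigma) = (u_\rho*u_\rho)\det(G_{\sigma^\rho})$.

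Combining the three ingredients, the numerical factors telescope to give $\Vol_\sigma(\conv(0,P+au_\rho)) = a(u_\rho*u_\rho)\Vol_{\sigma^\rho}(P)$. I expect the main obstacle to be the Gram-determinant step: although it is ultimately a one-line Schur complement, this is where the exponent of $u_\rho*u_\rho$ is produced, and it is the only place where the primal-versus-dual distinction between $N_\sigma$ and $M_\sigma$ must be carefully tracked.
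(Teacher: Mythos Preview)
Your proof is correct and takes a genuinely different route from the paper's. The paper works directly with the dual-lattice basis $\{v_\eta\}\subseteq M_\sigma$: it first shows that the vectors $\{v_\eta : \eta \neq \rho\}$ serve simultaneously as the dual basis for the ray generators of $\sigma^\rho$, so the unit simplex $\Delta(\sigma^\rho) = \conv(0,\{v_\eta : \eta \neq \rho\})$ is a facet of the unit simplex $\Delta(\sigma) = \conv(0,\{v_\eta\})$. It then shears the apex $v_\rho$ parallel to $\Delta(\sigma^\rho)$ onto the line through $\rho$ (landing at $u_\rho/(u_\rho*u_\rho)$), slides it to $au_\rho$, and reads off the scaling factor $a(u_\rho*u_\rho)$ from this one-dimensional stretch; the formula is then extended to arbitrary simplices by a linear change of variables and to arbitrary polytopes by triangulation. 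Your approach instead passes through Euclidean volumes and covolumes, compressing the lattice comparison into the single Gram-determinant identity $\det G_\sigma = (u_\rho*u_\rho)\det G_{\sigma^\rho}$ via a Schur complement. The paper's argument is more self-contained and hands-on (no appeal to the Euclidean pyramid formula or covolume duality), while yours is shorter and more structural for a reader comfortable with lattice covolumes; the paper's dual-basis observation and your Schur-complement step are, at bottom, two packagings of the same relationship between $M_\sigma$ and $M_{\sigma^\rho}$.
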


For intuition, we note that the polytope $\conv(0,P+au_\rho)$ appearing in the left-hand side of Lemma~\ref{lemma:pyramidvolume} is obtained from the polytope $P$ by first translating $P$ along the ray $\rho$, which is orthogonal to $N_{\sigma^\rho,\R}$, then taking the convex hull with the origin, the result of which can be thought of as a pyramid with $P$ as base and the origin as apex. The right-hand side can then be thought of as a ``base-times-height'' formula for the volume of this pyramid, where the ``height'' of the vector $au_\rho$ is $a(u_\rho*u_\rho)$. We now make this informal discussion precise.

%Extended example with images.

\begin{proof}[Proof of Lemma~\ref{lemma:pyramidvolume}]
Let $\{v_\eta\mid \eta\in\sigma(1)\}\subseteq M_\sigma$ be the dual basis of $\{u_\eta\mid\eta\in\sigma(1)\}\subseteq N_\sigma$, defined uniquely by the equations
\[
v_\eta*u_\mu=\begin{cases}
1 & \mu=\eta\\
0 & \mu\neq\eta.
\end{cases}
\]
Recall that each ray generator of $\sigma^\rho$ is of the form $\pr^\rho(u_\eta)$ for a unique $\eta\in\sigma(1)\setminus\{\rho\}$; we claim that the dual vector of $\pr^\rho(u_\eta)$ in $M_{\sigma^\rho}$ is $v_\eta$---in other words, the dual vector of $\pr^\rho(u_\eta)$ is the same as the dual vector of $u_\eta$. To verify this, note that, for any $\eta,\mu\in\sigma(1)\setminus\{\rho\}$, we have
\begin{align*}
\pr^\rho(u_\eta)\,*^\rho\, v_\mu&=(u_\eta-\pr_\rho(u_\eta))*v_\mu\\
&=u_\eta*v_\mu\\
&=\begin{cases}
1 & \mu=\eta\\
0 & \mu\neq\eta,
\end{cases}
\end{align*}
where the first equality uses the decomposition of $u_\eta$ into its orthogonal components, along with the fact that $*^\rho$ is just the restriction of $*$, and the second equality uses that $\pr_\rho(u_\eta)$ is a multiple of $u_\rho$, along with $u_\rho*v_\mu=0$. 

Using these dual bases, we defined simplices in each of vector spaces $N_{\sigma,\R}$ and $N_{\sigma^\rho,\R}$ by
\[
\Delta(\sigma)=\conv(0,\{v_\eta\mid\eta\in\sigma(1)\})\subseteq N_{\sigma,\R}
\]
and
\[
\Delta(\sigma^\rho)=\conv(0,\{v_\eta\mid\eta\in\sigma(1)\setminus\{\rho\}\})\subseteq N_{\sigma^\rho,\R}.
\]
By our convention on how volumes are normalized in $N_{\sigma,\R}$ and $N_{\sigma^\rho,\R}$, along with our verification above that $\{v_\eta\mid\eta\in\sigma(1)\setminus\{\rho\}\}$ is the dual basis of the ray generators of $\sigma^\rho$, these simplices have unit volume:
\[
\Vol_\sigma(\Delta(\sigma))=\Vol_{\sigma^\rho}(\Delta(\sigma^\rho))=1.
\]
Notice that $\Delta(\sigma^\rho)$ is a facet of $\Delta(\sigma)$ and we can write $\Delta(\sigma)=\conv(v_\rho,\Delta(\sigma^\rho))$. If we project the vertex $v_\rho$ of $\Delta(\sigma)$ onto the line spanned by $\rho$, we obtain a new simplex
\[
\Delta_1(\sigma)=\conv(\pr_\rho(v_\rho),\Delta(\sigma^\rho)).
\]
Since the projection $\pr_\rho$ is parallel to the facet $\Delta(\sigma^\rho)$, it follows that
\[
\Vol_\sigma(\Delta_1(\sigma))=\Vol_\sigma(\Delta(\sigma))=\Vol_{\sigma^\rho}(\Delta(\sigma^\rho)).
\]
Now define a new simplex by sliding the vertex $\pr_\rho(v_\rho)$ along $\rho$ to the new vertex $au_\rho$:
\[
\Delta_2(\sigma)=\conv(a u_\rho,\Delta(\sigma^\rho)).
\]
By the standard projection formula, we have $\pr_\rho(v_\rho)=\frac{u_\rho}{u_\rho*u_\rho}$, from which we see that $\Delta_2(\sigma)$ is obtained from $\Delta_1(\sigma)$ by scaling the height of the vertex $\pr_\rho(v_\rho)$ by a factor of $a(u_\rho*u_\rho)$. It follows that the volume also scales by $a(u_\rho*u_\rho)$:
\[
\Vol_\sigma(\Delta_2(\sigma))=a(u_\rho*u_\rho)\cdot\Vol_\sigma(\Delta_1(\sigma))=a(u_\rho*u_\rho)\cdot\Vol_{\sigma^\rho}(\Delta(\sigma^\rho)).
\]
More concisely, we have proved that
\begin{equation}\label{eq:pyramidy}
\Vol_\sigma\big(\conv(a u_\rho,P)\big)=a(u_\rho*u_\rho)\cdot\Vol_{\sigma^\rho}(P)
\end{equation}
when $P=\Delta(\sigma^\rho)$. 

As a visual aid, we have depicted below the sequence of polytopes from the above discussion in the specific setting of a two-dimensional cone $\sigma$, which we have visualized in $\R^2$ with the usual dot product.

\begin{center}
\begin{tikzpicture}[scale=1.5]
\draw[draw=blue!10, fill=blue!10, fill opacity=.8] (0,0) -- (1.9,0) -- (1.9,1.9) -- cycle;
\node at (1,0) {$\bullet$};
\node at (1,1) {$\bullet$};
\node at (0,0) {$\bullet$};
\node[left] at (0,-.1) {$0$};
\node at (1.5,.65) {$\sigma$};
\draw[->, thick, black] (0,0) -- (2,0);
\draw[->, thick, black] (0,0) -- (2,2);
\node[right] at (2,0) {$\eta$};
\node[right] at (2,2) {$\rho$};
\node[below] at (1,0) {$u_\eta$};
\node[above] at (.9,1) {$u_\rho$};
\draw[thick,black] (-1.2,1.2) -- (1.2,-1.2);
\node[above] at (-1.2,1.2) {$N_{\sigma^\rho,\R}$};
\node at (0,1) {$\bullet$};
\node[above] at (0,1) {$v_\rho$};
\node at (1,-1) {$\bullet$};
\node[right] at (1,-1) {$v_\eta$};
\end{tikzpicture}
\hspace{50bp}
\begin{tikzpicture}[scale=1.5]
\draw[thick,black, fill=green!20, fill opacity=.8] (0,0) -- (1,-1) -- (0,1) -- cycle;
\draw[->] (0,0) -- (2,2);
\node at (0,0) {$\bullet$};
\node[left] at (0,-.1) {$0$};
\node[right] at (2,2) {$\rho$};
\draw[thick,black] (-1.2,1.2) -- (1.2,-1.2);
\node[above] at (-1.2,1.2) {$N_{\sigma^\rho,\R}$};
\node at (0,1) {$\bullet$};
\node[above] at (0,1) {$v_\rho$};
\node at (1,-1) {$\bullet$};
\node[right] at (1,-1) {$v_\eta$};
\draw[line width=2bp] (0,0) -- (1,-1);
\node at (1.1,0) {$\Delta(\sigma)$};
\draw[thick,->] (.8,.05) to [bend right=20] (.25,0);
\node at (.3,-1.1) {$\Delta(\sigma^\rho)$};
\draw[thick,->] (.2,-.9) to [bend left=20] (.47,-.53);
\end{tikzpicture}
\begin{tikzpicture}[scale=1.5]
\draw[thick,black, fill=green!20, fill opacity=.8] (0,0) -- (1,-1) -- (.5,.5) -- cycle;
\draw[->] (0,0) -- (2,2);
\node at (0,0) {$\bullet$};
\node[left] at (0,-.1) {$0$};
\node[right] at (2,2) {$\rho$};
\draw[thick,black] (-1.2,1.2) -- (1.2,-1.2);
\node[above] at (-1.2,1.2) {$N_{\sigma^\rho,\R}$};
\node at (.5,.5) {$\bullet$};
\node[above] at (.2,.5) {$\frac{u_\rho}{u_\rho*u_\rho}$};
\node at (1,-1) {$\bullet$};
\node[right] at (1,-1) {$v_\eta$};
\draw[line width=2bp] (0,0) -- (1,-1);
\node at (1.3,0) {$\Delta_1(\sigma)$};
\draw[thick,->] (.9,.05) to [bend right=20] (.35,0);
\node at (.3,-1.1) {$\Delta(\sigma^\rho)$};
\draw[thick,->] (.2,-.9) to [bend left=20] (.47,-.53);
\end{tikzpicture}
\hspace{50bp}
\begin{tikzpicture}[scale=1.5]
\draw[thick,black, fill=green!20, fill opacity=.8] (0,0) -- (1,-1) -- (1.7,1.7) -- cycle;
\draw[->] (0,0) -- (2,2);
\node at (0,0) {$\bullet$};
\node[left] at (0,-.1) {$0$};
\node[right] at (2,2) {$\rho$};
\draw[thick,black] (-1.2,1.2) -- (1.2,-1.2);
\node[above] at (-1.2,1.2) {$N_{\sigma^\rho,\R}$};
\node at (1.7,1.7) {$\bullet$};
\node[right] at (1.7,1.5) {$au_\rho$};
\node at (1,-1) {$\bullet$};
\node[right] at (1,-1) {$v_\eta$};
\draw[line width=2bp] (0,0) -- (1,-1);
\node at (.8,.1) {$\Delta_2(\sigma)$};
\node at (.3,-1.1) {$\Delta(\sigma^\rho)$};
\draw[thick,->] (.2,-.9) to [bend left=20] (.47,-.53);
\end{tikzpicture}
\end{center}

We now extend \eqref{eq:pyramidy} to any simplex $P\subseteq N_{\sigma^\rho,\R}$. To do so, first note that a simplex $P$ can be obtained from the specific simplex $\Delta(\sigma^\rho)$ by a composition of a translation and a linear transformation on $N_{\sigma^\rho,\R}$. Translating $P$ within $N_{\sigma^\rho,\R}$ does not affect the volume on either side of \eqref{eq:pyramidy}. Given a linear transformation $T$, on the other hand, we can extend it to a linear transformation $\widehat T$ on $N_{\sigma,\R}$ by simply fixing the vector $u_\rho$, in which case we have
\[
\widehat T(\conv(a u_\rho,P))=\conv(a u_\rho,T(P)).
\]
Since $\det(\widehat T)=\det(T)$ and linear transformations scale volumes by the absolute values of their determinants, we conclude that the equality in \eqref{eq:pyramidy} is preserved upon taking linear transforms of $P$:
\begin{align*}
\Vol_\sigma\big(\conv(a u_\rho,T(P))\big)&=\Vol_\sigma\big(\widehat T(\conv(a u_\rho,P))\big)\\
&=|\det(\widehat T)|\Vol_\sigma\big(\conv(a u_\rho,P)\big)\\
&=|\det(T)|\cdot a(u_\rho*u_\rho)\cdot\Vol_{\sigma^\rho}(P)\\
&=a(u_\rho*u_\rho)\cdot\Vol_{\sigma^\rho}(T(P)).
\end{align*}

Knowing that \eqref{eq:pyramidy} holds for simplices, we extend it to arbitrary polytopes $P\subseteq N_{\sigma^\rho,\R}$ by triangulating $P$ and applying \eqref{eq:pyramidy} to each simplex in the triangulation. The lemma then follows from \eqref{eq:pyramidy} along with the observation that $\conv(a u_\rho,P)$ is just a reflection of $\conv(0,P+au_\rho)$, so has the same volume.
\end{proof}

We now use Lemma~\ref{lemma:pyramidvolume} to prove Proposition~\ref{prop:pyramidvolume}.

\begin{proof}[Proof of Proposition~\ref{prop:pyramidvolume}]
For each top-dimensional cone $\sigma\in\Sigma(d)$ and $\rho\in\sigma(1)$, consider the polytope face $\F_\rho(P_{\sigma,*}(z))\subseteq P_{\sigma,*}(z)$. By definition, we have
\[
\F_\rho(P_{\sigma,*}(z))=\F^\rho(P_{\sigma,*}(z))+w_{\rho,*}(z).
\]
Noting that $w_{\rho,*}(z)=\frac{z_\rho}{u_\rho*u_\rho}u_\rho$, Lemma~\ref{lemma:pyramidvolume} computes the volume of the pyramid $\conv(0,\F_\rho(P_{\sigma,*}(z)))$:
\begin{equation}\label{eq:pyramid!!}
\Vol_\sigma\big(\conv(0,\F_\rho(P_{\sigma,*}(z)))\big)=z_\rho\Vol_{\sigma^\rho}(\F^\rho(P_{\sigma,*}(z))=z_\rho\Vol_{\sigma^\rho}(P_{\sigma^\rho,*^\rho}(z^\rho)),
\end{equation}
where the second equality is an application of \eqref{eq:polytranslate}.

Next, note that we can decompose each polytope $P_{\sigma,*}(z)$ into pyramids over the faces $\F_\rho(P_{\sigma,*}(z))$ with $\rho\in\sigma(1)$, implying that
\begin{equation}\label{eq:pyramid!}
\Vol_\sigma(P_{\sigma,*}(z))=\sum_{\rho\in\sigma(1)}\Vol_\sigma\big(\conv(0,\F_\rho(P_{\sigma,*}(z))\big).
\end{equation}
We then compute:
\begin{align*}
\Vol_{\Sigma,\omega,*}(z)&=\sum_{\sigma\in\Sigma(d)}\omega(\sigma)\Vol_\sigma(P_{\sigma,*}(z))\\
&=\sum_{\sigma\in\Sigma(d)}\omega(\sigma)\sum_{\rho\in\sigma(1)}\Vol_\sigma\big(\conv(0,\F_\rho(P_{\sigma,*}(z))\big)\\
&=\sum_{\sigma\in\Sigma(d)}\omega(\sigma)\sum_{\rho\in\sigma(1)}z_\rho\Vol_{\sigma^\rho}(P_{\sigma^\rho,*^\rho}(z^\rho))\\
&=\sum_{\rho\in\Sigma(1)}z_\rho\sum_{\sigma^\rho\in\Sigma^\rho(d-1)}\omega^\rho(\sigma^\rho)\Vol_{\sigma^\rho}(P_{\sigma^\rho,*^\rho}(z^\rho))\\
&=\sum_{\rho\in\Sigma(1)}z_\rho\Vol_{\Sigma^\rho,\omega^\rho,*^\rho}(z^\rho),
\end{align*}
where the first equality is the definition of $\Vol_{\Sigma,\omega,*}(z)$, the second and third are \eqref{eq:pyramid!} and \eqref{eq:pyramid!!}, respectively, the fourth follows from the definition of $\omega^\rho$ and the fact that cones in $\Sigma^\rho(d-1)$ are in bijection with the cones in $\Sigma(d)$ containing $\rho$ via $\sigma^\rho\leftrightarrow\sigma$, and the fifth is the definition of $\Vol_{\Sigma^\rho,\omega^\rho,*^\rho}(z^\rho)$.
\end{proof}

\subsection{Mixed volumes and facets}

The aim of this subsection is to enhance Proposition~\ref{prop:pyramidvolume} to the following more general statement about mixed volumes. See \cite[Lemma 5.1.5]{Schneider} for the analogous result in the classical setting of strongly isomorphic polytopes.

\begin{proposition}~\label{prop:pyramidmixed}
Let $\Sigma\subseteq N_\R$ be a simplicial $d$-fan with weight function $\omega:\Sigma(d)\rightarrow\R_{>0}$,  let $*\in\Inn(N_\R)$ be an inner product, and let $z_1,\dots,z_d\in\oCub(\Sigma,*)$ be pseudocubical values. Then
\[
\MVol_{\Sigma,\omega,*}(z_1,\dots,z_d)=\sum_{\rho\in\Sigma(1)}z_{1,\rho}\MVol_{\Sigma^\rho,\omega^\rho,*^\rho}(z_2^\rho,\dots,z_d^\rho).
\]
\end{proposition}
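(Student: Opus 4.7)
The strategy is to verify that the right-hand side satisfies the three characterizing properties of $\MVol_{\Sigma,\omega,*}$ established in Proposition~\ref{prop:mvolchar}---multilinearity, symmetry, and normalization by $\Vol_{\Sigma,\omega,*}$---so that the uniqueness clause there forces equality. Let $f(z_1,\dots,z_d)$ denote the right-hand side.

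Multilinearity and normalization are the easy properties. The map $z\mapsto z^\rho$ is linear because $z^\rho_\eta=z_{\hat\eta}-w_{\rho,*}(z)*u_{\hat\eta}$ and $z\mapsto w_{\rho,*}(z)$ was shown to be linear in equation~\eqref{eq:linearws}; combined with the multilinearity of $\MVol_{\Sigma^\rho,\omega^\rho,*^\rho}$, this yields multilinearity of $f$. Normalization, $f(z,\dots,z)=\Vol_{\Sigma,\omega,*}(z)$, follows from Proposition~\ref{prop:pyramidvolume} after rewriting $\MVol_{\Sigma^\rho,\omega^\rho,*^\rho}(z^\rho,\dots,z^\rho)=\Vol_{\Sigma^\rho,\omega^\rho,*^\rho}(z^\rho)$.

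The main obstacle is symmetry. Since $\MVol_{\Sigma^\rho,\omega^\rho,*^\rho}$ is symmetric, $f$ is automatically symmetric in $z_2,\dots,z_d$; it therefore suffices to verify the swap $z_1\leftrightarrow z_2$. I would prove this by induction on $d$, the base case $d=1$ being vacuous. For the inductive step, apply the proposition in dimension $d-1$ to each star fan $\Sigma^\rho$ to expand
\[
\MVol_{\Sigma^\rho,\omega^\rho,*^\rho}(z_2^\rho,\dots,z_d^\rho)=\sum_{\eta\in\Sigma^\rho(1)}(z_2^\rho)_\eta\,\MVol_{\Sigma^{\rho,\eta},\omega^{\rho,\eta},*^{\rho,\eta}}((z_3^\rho)^\eta,\dots,(z_d^\rho)^\eta).
\]
Propositions~\ref{prop:facesarenormalcomplexes} and~\ref{prop:faceofaface} allow us to identify $\Sigma^{\rho,\eta}=\Sigma^\pi$, with matching weights and inner products, and $(z^\rho)^\eta=z^\pi$ for $\pi=\rho\cup\hat\eta\in\Sigma(2)$. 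Reindexing the double sum $\sum_\rho\sum_\eta$ by pairs $(\pi,\rho)$ with $\pi\in\Sigma(2)$ and $\rho\in\pi(1)$ yields
\[
f(z_1,\dots,z_d)=\sum_{\pi\in\Sigma(2)}\left(\sum_{\rho\in\pi(1)}z_{1,\rho}(z_2^\rho)_{\pi^\rho}\right)\MVol_{\Sigma^\pi,\omega^\pi,*^\pi}(z_3^\pi,\dots,z_d^\pi).
\]
Only the inner bracket depends on $z_1$ and $z_2$, so it suffices to show that the inner bracket is symmetric in $(z_1,z_2)$. Writing $\pi(1)=\{\alpha,\beta\}$ and using $w_{\rho,*}(z)=\frac{z_\rho}{u_\rho*u_\rho}u_\rho$ for any ray $\rho$, the inner bracket expands to
\[
z_{1,\alpha}z_{2,\beta}+z_{1,\beta}z_{2,\alpha}-(u_\alpha*u_\beta)\left(\frac{z_{1,\alpha}z_{2,\alpha}}{u_\alpha*u_\alpha}+\frac{z_{1,\beta}z_{2,\beta}}{u_\beta*u_\beta}\right),
\]
which is manifestly symmetric in $(z_1,z_2)$. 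This completes the inductive step, and Proposition~\ref{prop:mvolchar} then gives $f=\MVol_{\Sigma,\omega,*}$.
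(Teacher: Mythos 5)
Your proof is correct and takes essentially the same approach as the paper's: verify the three characterizing properties of Proposition~\ref{prop:mvolchar} for the right-hand side, with normalization supplied by Proposition~\ref{prop:pyramidvolume}, multilinearity from linearity of $z\mapsto z^\rho$, and symmetry by induction on $d$ via the reindexing of the double sum over pairs of rays as a single sum over two-dimensional cones using Proposition~\ref{prop:faceofaface}. The only cosmetic difference is that you check symmetry of the inner coefficient by fully expanding it into the expression $z_{1,\alpha}z_{2,\beta}+z_{1,\beta}z_{2,\alpha}-(u_\alpha*u_\beta)\bigl(\tfrac{z_{1,\alpha}z_{2,\alpha}}{u_\alpha*u_\alpha}+\tfrac{z_{1,\beta}z_{2,\beta}}{u_\beta*u_\beta}\bigr)$, while the paper observes directly that each subtracted term $z_{1,\rho}\,w_{\rho,*}(z_2)*u_\eta$ is individually symmetric; both hinge on the identity $w_{\rho,*}(z)=\frac{z_\rho}{u_\rho*u_\rho}u_\rho$.
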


\begin{proof}
We proceed by induction on $d$. If $d=1$, then mixed volumes are just volumes, in which case Proposition~\ref{prop:pyramidmixed} is a special case of Proposition~\ref{prop:pyramidvolume}. Assume, now, that Proposition~\ref{prop:pyramidmixed} holds in dimension less than $d>1$. Define
\[
F(z_1,\dots,z_d)=\sum_{\rho\in\Sigma(1)}z_{1,\rho}\MVol_{\Sigma^\rho,\omega^\rho,*^\rho}(z_2^\rho,\dots,z_d^\rho).
\]
To prove that $F=\MVol_{\Sigma,*,\omega}$, Proposition~\ref{prop:mvolchar} tells us that it suffices to prove that $F$ is (1) symmetric, (2) multilinear, and (3) normalized correctly with respect to volume; we check these properties in reverse order.

To check (3), we note that
\begin{align*}
F(z,\dots,z)&=\sum_{\rho\in\Sigma(1)}z_{\rho}\MVol_{\Sigma^\rho,\omega^\rho,*^\rho}(z^\rho,\dots,z^\rho)\\
&=\sum_{\rho\in\Sigma(1)}z_{\rho}\Vol_{\Sigma^\rho,\omega^\rho,*^\rho}(z^\rho)\\
&=\Vol_{\Sigma,\omega,*}(z),
\end{align*}
where the first equality is the definition of $F$, the second is Proposition~\ref{prop:mvolchar} Part (3), and the third is Proposition~\ref{prop:pyramidvolume}.

To check (2), there are two cases to consider: linearity in the first coordinate and linearity in every other coordinate. Linearity in the first coordinate follows quickly from the definition of $F$, while linearity in every other coordinate follows from Proposition~\ref{prop:mvolchar} Part (2) applied to $(\Sigma^\rho,*^\rho,\omega^\rho)$.

Finally, to check (1), we first note that Proposition~\ref{prop:mvolchar} Part (1) applied to $(\Sigma^\rho,*^\rho,\omega^\rho)$ implies that $F$ is symmetric in the entries $z_2,\dots,z_d$. Thus, it remains to prove that $F$ is invariant under transposing $z_1$ and $z_2$. To do so, we first apply the induction hypothesis to the mixed volumes appearing in the definition of $F$ to obtain
\begin{equation}\label{eq:mixedinduction}
F(z_1,\dots,z_d)=\sum_{\rho\in\Sigma(1)}z_{1,\rho}\sum_{\eta^\rho\in\Sigma^\rho(1)}z_{2,\eta^\rho}^\rho\MVol_{\Sigma^{\rho,\eta},\omega^{\rho,\eta},*^{\rho,\eta}}(z_3^{\rho,\eta},\dots,z_d^{\rho,\eta}),
\end{equation}
where, to avoid the proliferation of parentheses and superscripts, we have written, for example, $\Sigma^{\rho,\eta}$ as short-hand for $(\Sigma^\rho)^{\eta^\rho}$. Notice that the mixed volumes appearing in the right-hand side of \eqref{eq:mixedinduction} are mixed volumes associated to faces of faces. Proposition~\ref{prop:faceofaface} tells us that the $\eta^\rho$-face of the $\rho$-face of a normal complex is the same as the $\tau$ face of the original normal complex, where $\tau\in\Sigma(2)$ is the $2$-cone containing $\rho$ and $\eta$ as rays. Therefore,
\[
\MVol_{\Sigma^{\rho,\eta},\omega^{\rho,\eta},*^{\rho,\eta}}(z_3^{\rho,\eta},\dots,z_d^{\rho,\eta})=\MVol_{\Sigma^\tau,\omega^\tau,*^\tau}(z_3^\tau,\dots,z_d^\tau).
\]
 Keeping in mind that each $2$-cone $\tau$ appears twice in \eqref{eq:mixedinduction}, once for each ordering of the rays, we have
\[
F(z_1,\dots,z_d)=\sum_{\tau\in\Sigma(2)\atop \tau(1)=\{\rho,\eta\}}(z_{1,\rho}z_{2,\eta^\rho}^\rho+z_{1,\eta}z_{2,\rho^\eta}^\eta)\MVol_{\Sigma^\tau,\omega^\tau,*^\tau}(z_3^\tau,\dots,z_d^\tau).
\]
Therefore, it remains to prove that $z_{1,\rho}z_{2,\eta^\rho}^\rho+z_{1,\eta}z_{2,\rho^\eta}^\eta$ is invariant under transposing $1$ and $2$. Computing directly from the definition of $z^\rho$, we have
\[
z_{1,\rho}z_{2,\eta^\rho}^\rho+z_{1,\eta}z_{2,\rho^\eta}^\eta=z_{1,\rho}\big(z_{2,\eta}-w_{\rho,*}(z_2)*u_\eta\big)+z_{1,\eta}\big(z_{2,\rho}-w_{\eta,*}(z_2)*u_\rho\big),
\]
from which we see that it suffices to prove that both
\[
z_{1,\rho}w_{\rho,*}(z_2)*u_\eta\;\;\;\text{ and }\;\;\;z_{1,\eta}w_{\eta,*}(z_2)*u_\rho
\]
are invariant under transposing $1$ and $2$. This invariance follows from the computations
\[
w_{\rho,*}(z_2)=\frac{z_{2,\rho}}{u_\rho*u_\rho}u_\rho\;\;\;\text{ and }\;\;\;w_{\eta,*}(z_2)=\frac{z_{2,\eta}}{u_\eta*u_\eta}u_\eta.\qedhere
\]
\end{proof}

The following analytic consequence of Proposition~\ref{prop:pyramidmixed} will be useful in our computations in the next section.

\begin{corollary}\label{cor:derivatives}
In addition to the hypotheses of Proposition~\ref{prop:pyramidmixed}, assume that $\Cub(\Sigma,*)$ is nonempty. Then for any fixed $z_1,\dots,z_k\in\Cub(\Sigma,*)$, we have
\[
\frac{\partial}{\partial z_\rho}\MVol_{\Sigma,\omega,*}(z_1,\dots,z_k,\underbrace{z,\dots,z}_{d-k})=(d-k)\MVol_{\Sigma^\rho,\omega^\rho,*^\rho}(z_1^\rho,\dots,z_k^\rho,\underbrace{z^\rho,\dots,z^\rho}_{d-k-1}).
\]
\end{corollary}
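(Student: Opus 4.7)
The plan is to imitate the polarization proof used for mixed volumes of polytopes. Let $f(z) := \MVol_{\Sigma,\omega,*}(z_1,\ldots,z_k,z,\ldots,z)$ with $d-k$ copies of $z$. I want to show $\partial f/\partial z_\rho$ equals $(d-k)$ times a mixed volume on $\Sigma^\rho$.

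First I would extend $\MVol_{\Sigma,\omega,*}$ to a symmetric multilinear function on $(\R^{\Sigma(1)})^d$. Proposition~\ref{prop:pyramidmixed} already displays the mixed volume as an honest linear function of its first argument---it equals $\sum_\rho z_{1,\rho}\MVol_{\Sigma^\rho,\omega^\rho,*^\rho}(z_2^\rho,\ldots,z_d^\rho)$, with coefficients independent of $z_1$---so the linear extension in the first slot is immediate. Symmetry (Proposition~\ref{prop:mvolchar}(1)) then propagates this extension to every slot. Since $\Cub(\Sigma,*)$ is nonempty and open (Proposition~\ref{prop:normalcomplexprelims}(1)), $\oCub(\Sigma,*)$ spans $\R^{\Sigma(1)}$ and the extension is unique. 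In particular, $f$ becomes a homogeneous polynomial of degree $d-k$ in $z\in\R^{\Sigma(1)}$.

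Next, multilinear expansion (polarization) gives, for any $w\in\R^{\Sigma(1)}$,
\[
f(z+\lambda w) = \sum_{j=0}^{d-k}\binom{d-k}{j}\lambda^j\MVol_{\Sigma,\omega,*}\bigl(z_1,\ldots,z_k,\underbrace{w,\ldots,w}_{j},\underbrace{z,\ldots,z}_{d-k-j}\bigr),
\]
and differentiating at $\lambda=0$ yields
\[
\left.\frac{d}{d\lambda}\right|_{\lambda=0}f(z+\lambda w) = (d-k)\,\MVol_{\Sigma,\omega,*}(z_1,\ldots,z_k,w,z,\ldots,z).
\]
Setting $w=e_\rho$, the standard basis vector in $\R^{\Sigma(1)}$ corresponding to $\rho$, recovers $\tfrac{\partial f}{\partial z_\rho}(z)$; this is a bona fide ordinary partial derivative at any $z\in\Cub(\Sigma,*)$ because $\Cub(\Sigma,*)$ is open, so $z+\lambda e_\rho\in\Cub(\Sigma,*)$ for small $|\lambda|$.

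Finally, I would use symmetry to move $e_\rho$ into the first argument and apply Proposition~\ref{prop:pyramidmixed}:
\[
\MVol_{\Sigma,\omega,*}(e_\rho,z_1,\ldots,z_k,z,\ldots,z) = \sum_{\eta\in\Sigma(1)}(e_\rho)_\eta\MVol_{\Sigma^\eta,\omega^\eta,*^\eta}(z_1^\eta,\ldots,z_k^\eta,z^\eta,\ldots,z^\eta) = \MVol_{\Sigma^\rho,\omega^\rho,*^\rho}(z_1^\rho,\ldots,z_k^\rho,z^\rho,\ldots,z^\rho),
\]
using $(e_\rho)_\eta=\delta_{\rho,\eta}$. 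Combining the three steps gives the claim. The main obstacle is the first step: $e_\rho$ generally does not lie in $\oCub(\Sigma,*)$, so both the polarization and the application of Proposition~\ref{prop:pyramidmixed} require working beyond the pseudocubical cone. This obstacle dissolves because Proposition~\ref{prop:pyramidmixed} exhibits $\MVol_{\Sigma,\omega,*}$ as genuinely linear in each argument, not merely positively linear.
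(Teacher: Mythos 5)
Your proof is correct and uses the same essential ingredients as the paper: Proposition~\ref{prop:pyramidmixed}, symmetry, and differentiation of the resulting polynomial identity. The paper's proof is slightly more economical—it never leaves the pseudocubical cone, instead using symmetry to write $\MVol_{\Sigma,\omega,*}(z_1,\dots,z_d)=\sum_{\eta}z_{i,\eta}\MVol_{\Sigma^\eta,\omega^\eta,*^\eta}(\dots)$, reading off $\partial/\partial z_{i,\rho}$ directly as the coefficient of $z_{i,\rho}$, and then applying the multivariable chain rule to the specialization $z_{k+1}=\cdots=z_d=z$ to produce the factor $d-k$. Your detour through a formal multilinear extension and the evaluation at $e_\rho\notin\oCub(\Sigma,*)$ makes the domain issue explicit, which is a reasonable pedagogical choice, but it is not logically necessary: since $\Cub(\Sigma,*)$ is open, all partial derivatives can be computed as honest derivatives of a polynomial restricted to an open set, as the paper does. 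Both routes hinge on the same observation that the right-hand side of Proposition~\ref{prop:pyramidmixed} is a linear form in the coordinates of its first argument.
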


\begin{proof}
The assumption that $\Cub(\Sigma,*)\neq\emptyset$ implies that $\MVol_{\Sigma,\omega,*}(z_1,\dots,z_k,z,\dots,z)$ is a degree $d-k$ polynomial in $\R[z_\rho\mid\rho\in\Sigma(1)]$, so the derivatives are well-defined. Proposition~\ref{prop:pyramidmixed} and symmetry of mixed volumes imply that
\[
\frac{\partial}{\partial z_{i,\rho}}\MVol_{\Sigma,\omega,*}(z_1,\dots,z_d)=\MVol_{\Sigma^\rho,\omega^\rho,*^\rho}(z_1^\rho,\dots,z_{i-1}^\rho,z_{i+1}^\rho,\dots,z_d^\rho).
\]
Viewing $\MVol_{\Sigma,\omega,*}(z_1,\dots,z_k,z,\dots,z)$ as the composition of $\MVol_{\Sigma,\omega,*}(z_1,\dots,z_d)$ with the specialization
\[
z_{k+1}=\cdots=z_d=z, 
\]
the result then follows from the multivariable chain rule.
\end{proof}

\section{Alexandrov--Fenchel inequalities}

One of the most consequential properties of mixed volumes of polytopes (or, more generally, of mixed volumes of convex bodies) is the \textbf{Alexandrov--Fenchel inequalities}. Given polytopes $P_1,\dots,P_d$ in a $d$-dimensional real vector space $V$ with volume function $\Vol$, the Alexandrov--Fenchel inequalities state that
\[
\MVol(P_1,P_2,P_3,\dots,P_d)^2\geq \MVol(P_1,P_1,P_3,\dots,P_d)\MVol(P_2,P_2,P_3,\dots,P_d)
\] 
 (see, for example, \cite[Theorem~7.3.1]{Schneider} for a proof and historical references). It is our aim in this section to study Alexandrov--Fenchel inequalities in the setting of mixed volumes of normal complexes.

Let $\Sigma\subseteq N_\R$ be a simplicial $d$-fan, $\omega:\Sigma(d)\rightarrow\R_{>0}$ a weight function, and $*\in\Inn(N_\R)$ an inner product. We say that the triple $(\Sigma,\omega,*)$ is \textbf{Alexandrov--Fenchel}, or just \textbf{AF} for short, if $\Cub(\Sigma,*)\neq\emptyset$ and 
\[
\MVol_{\Sigma,\omega,*}(z_1,z_2,z_3,\dots,z_d)^2\geq\MVol_{\Sigma,\omega,*}(z_1,z_1,z_3,\dots,z_d)\MVol_{\Sigma,\omega,*}(z_2,z_2,z_3,\dots,z_d)
\]
for all $z_1,\dots,z_d\in\Cub(\Sigma,*)$. In this section, we prove the following result, which provides sufficient conditions for proving that a triple $(\Sigma,\omega,*)$ is AF.

\begin{theorem}\label{thm:reduce}
Let $\Sigma\subseteq N_\R$ be a simplicial $d$-fan, $\omega:\Sigma(d)\rightarrow\R_{>0}$ a weight function, and $*\in\Inn(N_\R)$ an inner product such that $\Cub(\Sigma,*)\neq\emptyset$. The triple $(\Sigma,\omega,*)$ is AF if the following two conditions are satisfied:
\begin{enumerate}
\item[(i)] $\Sigma^\tau\setminus\{0\}$ is connected for any cone $\tau\in\Sigma(k)$ with $k\leq d-3$;
\item[(ii)] $\mathrm{Hess}\big(\Vol_{\Sigma^\tau,\omega^\tau,*^\tau}(z)\big)$ has exactly one positive eigenvalue for any $\tau\in\Sigma(d-2)$.
\end{enumerate}
\end{theorem}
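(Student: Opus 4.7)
The plan is to prove the stronger statement that $\Vol_{\Sigma,\omega,*}$ is \emph{$\Cub(\Sigma,*)$-Lorentzian}: for every choice of $z_3,\ldots,z_d\in\Cub(\Sigma,*)$, the symmetric bilinear form $B(a,b)\defeq \MVol_{\Sigma,\omega,*}(a,b,z_3,\ldots,z_d)$ on $\R^{\Sigma(1)}$ has at most one positive eigenvalue. Combined with Proposition~\ref{prop:positive} (strict positivity of $B$ on $\Cub(\Sigma,*)$), this implies AF by the standard reverse Cauchy--Schwarz argument: for $z_1,z_2\in\Cub(\Sigma,*)$, the $B$-orthogonal decomposition $z_1 = tz_2+a'$ with $B(a',z_2)=0$ forces $B(a',a')\le 0$, which rearranges to
\[
B(z_1,z_2)^2 \geq B(z_1,z_1)\,B(z_2,z_2).
\]

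I would prove the Lorentzian property by induction on $d$. The base case $d=2$ is immediate from condition (ii) applied to the zero cone $\tau=0\in\Sigma(0)$: since $(\Sigma^0,\omega^0,*^0)=(\Sigma,\omega,*)$, condition (ii) asserts that $\Hess(\Vol_{\Sigma,\omega,*})$ has exactly one positive eigenvalue, which coincides (up to a factor of $2$) with the matrix of $B$ in the quadratic case. Before the inductive step, I would verify that conditions (i) and (ii) are inherited by every star fan $\Sigma^\rho$ with $\rho\in\Sigma(1)$: by Proposition~\ref{prop:faceofaface} applied to iterated stars, every codimension-$k$ star of $\Sigma^\rho$ coincides with a codimension-$k$ star of $\Sigma$ at some cone containing $\rho$, so the hypotheses descend directly.

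For the inductive step at $d\geq 3$, fix $z_3,\ldots,z_d\in\Cub(\Sigma,*)$ and use Proposition~\ref{prop:pyramidmixed} to write
\[
B(a,b) = \sum_{\rho\in\Sigma(1)} a_\rho \cdot \MVol_{\Sigma^\rho,\omega^\rho,*^\rho}(b^\rho,z_3^\rho,\ldots,z_d^\rho).
\]
Iterating this decomposition and using Proposition~\ref{prop:faceofaface} expresses the off-diagonal entries $B_{\rho,\eta}$ (for distinct rays spanning a cone $\tau\in\Sigma(2)$) as the positive mixed volumes $\MVol_{\Sigma^\tau,\omega^\tau,*^\tau}(z_3^\tau,\ldots,z_d^\tau)$, while the diagonal entries pick up correction terms from the linear operators $z\mapsto z^\rho$. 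The inductive hypothesis supplies the Lorentzian property on every $\Sigma^\rho$. Following the strategies of Cordero-Erausquin--Klartag--Merigot--Santambrogio~\cite{OneMoreProof} and Br\"and\'en--Leake~\cite{BrandenLeake}, I would argue that a hypothetical second positive eigenvector of $B$ would, after restriction to a suitable subspace, produce either a conflicting positive eigenvector in some $\Sigma^\rho$ (contradicting the inductive signature) or a nonnegative eigenvector supported on a disconnected subcollection of $\Sigma(1)$ (forbidden by condition (i) via a Perron--Frobenius irreducibility argument on the matrix whose sparsity pattern is dictated by the $1$-skeleton of $\Sigma$).

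I expect the main obstacle to be this final combinatorial-analytic step, i.e., precisely articulating how the connectivity of low-codimension stars combines with the inductive Lorentzian signature on higher-codimension stars to rule out a second positive eigenvalue of $B$. An equivalent formulation, following~\cite{OneMoreProof}, is to show directly that the restriction of $B$ to any hyperplane $\{a : B(a,b)=0\}$ with $b\in\Cub(\Sigma,*)$ is negative semi-definite; this can be attacked by the pyramid decomposition, which localizes the analysis to each star fan $\Sigma^\rho$, where the inductive hypothesis applies. In either route, condition (ii) serves as the base of the recursion at the two-dimensional stars, and condition (i) is the non-degeneracy guarantee that propagates the one-positive-eigenvalue signature from the star fans up to the full fan $\Sigma$.
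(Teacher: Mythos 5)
Your overall strategy matches the paper's: reduce to the stronger assertion that the volume polynomial is $\Cub(\Sigma,*)$-Lorentzian, prove that assertion by induction on $d$ with $d=2$ handled by condition~(ii), and feed the induction using the pyramid decomposition (Proposition~\ref{prop:pyramidmixed}) together with the observation that stars of $\Sigma^\rho$ are stars of $\Sigma$. The reduction from Lorentzian to AF, the identification of the base case, and the computation that off-diagonal Hessian entries are mixed volumes of two-dimensional star fans (positive for cubical values, zero when two rays do not span a $2$-cone) are all exactly as in the paper.

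Where your write-up stops short of a proof is the closing step of the induction, which you yourself flag as "the main obstacle." You have the correct three ingredients in hand --- positivity of mixed volumes, the inductive Lorentzian property of each $\partial_\rho\Vol$, and irreducibility of the restricted Hessian coming from condition~(i) --- but you do not actually derive the single-positive-eigenvalue conclusion from them. The "hypothetical second positive eigenvector" sketch is heuristic: restricting an eigenvector of $B$ to a coordinate hyperplane does not, in general, produce an eigenvector of any $\Hess(\partial_\rho\Vol)$, and the Perron--Frobenius argument you invoke gives information about the \emph{largest} eigenvalue, not directly about the count of positive ones. The paper avoids re-deriving this step by citing Br\"and\'en--Leake \cite[Proposition~2.4]{BrandenLeake} (the paper's Lemma~\ref{lem:lorentziancheck}), which packages exactly the implication you need: if all $d$-fold directional derivatives are positive, the $(d-2)$-fold directional Hessian is irreducible with nonnegative off-diagonals, and each $\partial_i f$ is $C$-Lorentzian, then $f$ is $C$-Lorentzian. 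Alternatively the paper notes one can run the CEKMS \cite{OneMoreProof} argument "more-or-less verbatim," which is the negative-semidefinite-on-a-hyperplane route you sketch in your last paragraph --- but you would still need to carry it out, and it involves a careful two-parameter induction rather than a one-line eigenvector restriction. As it stands, the proposal reduces Theorem~\ref{thm:reduce} to an unproved lemma of Lorentzian-polynomial type; either cite that lemma explicitly (as the paper does) or supply its proof.

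One small additional point: you define $B(a,b)=\MVol_{\Sigma,\omega,*}(a,b,z_3,\dots,z_d)$ as a bilinear form on all of $\R^{\Sigma(1)}$, but $\MVol_{\Sigma,\omega,*}$ is only defined on $\oCub(\Sigma,*)^d$. The extension is via the polynomial identity $\partial_{z_1}\cdots\partial_{z_k}\Vol_{\Sigma,\omega,*}(z)=\tfrac{d!}{(d-k)!}\MVol_{\Sigma,\omega,*}(z_1,\dots,z_k,z,\dots,z)$ (the paper's equation~\eqref{eq:partials}), which requires $\Cub(\Sigma,*)\neq\emptyset$ so that $\Vol_{\Sigma,\omega,*}$ is a polynomial; this is harmless but should be said.
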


\begin{remark}
Condition (i) in Theorem~\ref{thm:reduce} can be thought of as requiring that the fan $\Sigma$ does not have any ``pinch'' points. For example, in dimension four, this condition rules out fans that locally look like a pair of four-dimensional cones meeting along a ray, because the star fan associated to that ray would comprise two three-dimensional cones that meet only at the origin.
\end{remark}

\begin{remark}
Condition (ii) of Theorem~\ref{thm:reduce} concerns only the two-dimensional stars of $\Sigma$. Since the volume polynomial of a two-dimensional fan is a quadratic form, the Hessians appearing in Condition (ii) are constant matrices. Condition (ii) can be viewed as an analogue of the Brunn--Minkowski inequality for polygons. For an example of a two-dimensional (tropical) fan that does not satisfy Condition (ii), see \cite{BabaeeHuh}.
\end{remark}

\subsection{Proof of Theorem~\ref{thm:reduce}}

Our proof of Theorem~\ref{thm:reduce} is largely inspired by a proof of the classical Alexandrov--Fenchel inequalities recently developed by Cordero-Erausquin, Klartag, Merigot, and Santambrogio \cite{OneMoreProof}---for which the key geometric input is Proposition~\ref{prop:pyramidmixed}. While the arguments in \cite{OneMoreProof} can be employed in this setting more-or-less verbatim, we present a more streamlined proof using ideas regarding Lorentzian polynomials recently developed by Br\"and\'en and Leake \cite{BrandenLeake}. Before presenting a proof of Theorem~\ref{thm:reduce}, we pause to introduce key ideas regarding Lorentzian polynomials.

\subsubsection{Lorentzian polynomials on cones}

One way to view the AF inequalities is as the nonpositivity of the $2\times 2$ matrix
\[
\left[
\begin{array}{cc}
\MVol_{\Sigma,\omega,*}(z_1,z_1,z_3,\dots,z_d) & \MVol_{\Sigma,\omega,*}(z_1,z_2,z_3,\dots,z_d)\\
\MVol_{\Sigma,\omega,*}(z_2,z_1,z_3,\dots,z_d) & \MVol_{\Sigma,\omega,*}(z_2,z_2,z_3,\dots,z_d)
\end{array}
\right],
\] 
and this nonpositivity is equivalent to the matrix having exactly one positive eigenvalue. Lorentzian polynomials are a clever tool for capturing the essence of this observation, and are therefore a natural setting for understanding AF-type inequalities. 

Our discussion of Lorentzian polynomials follows Br\"and\'en and Leake \cite{BrandenLeake}. Suppose that $C\subseteq\R_{>0}^n$ is a nonempty open convex cone, and let $f\in\R[x_1,\dots,x_n]$ be a homogeneous polynomial of degree $d$. For each $i=1,\dots,n$ and $v=(v_1,\dots,v_n)\in\R^n$, we use the following shorthand for partial  and directional derivatives
\[
\partial_i=\frac{\partial}{\partial x_i}\;\;\;\text{ and }\;\;\;\partial_v=\sum_{i=1}^n v_i\partial_i.
\]
We say that $f$ is \textbf{$C$-Lorentzian} if, for all $v_1,\dots,v_d\in C$, 
\begin{enumerate}
\item[(P)] $\partial_{v_1}\cdots\partial_{v_d}f>0$, and
\item[(H)] $\Hess(\partial_{v_3}\cdots\partial_{v_d}f)$ has exactly one positive eigenvalue.
\end{enumerate}

To relate Lorentzian polynomials back to AF-type inequalities, we recall the following key observation (see \cite[Proposition~4.4]{BrandenHuh}).

\begin{lemma}\label{lem:lorentzian}
Let $C\subseteq\R_{>0}^n$ be a nonempty open convex cone, and let $f\in\R[x_1,\dots,x_n]$ be $C$-Lorentzian. Then for all $v_1,v_2,v_3\dots,v_d\in C$, we have
\[
\big(\partial_{v_1}\partial_{v_2}\partial_{v_3}\cdots\partial_{v_d}f\big)^2\geq \big(\partial_{v_1}\partial_{v_1}\partial_{v_3}\cdots\partial_{v_d}f\big) \big(\partial_{v_2}\partial_{v_2}\partial_{v_3}\cdots\partial_{v_d}f\big).
\]
\end{lemma}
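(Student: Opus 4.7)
The plan is to reduce the claimed inequality to a reverse Cauchy--Schwarz statement for a symmetric bilinear form of Lorentzian signature, an approach essentially due to H\"ormander in the classical convex-body setting. First I would set $q\defeq\partial_{v_3}\cdots\partial_{v_d}f$; since $f$ is homogeneous of degree $d$, the polynomial $q$ is homogeneous of degree $2$, so its Hessian matrix $M$ is constant and symmetric, and we may write $q(x)=\tfrac{1}{2}x^{T}Mx$. Condition (H) applied to $v_3,\dots,v_d$ says precisely that $M$ has exactly one positive eigenvalue, while condition (P) applied with $v_1$ (resp.\ $v_2$) repeated gives $v_1^{T}Mv_1>0$ and $v_2^{T}Mv_2>0$.

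A direct computation for a homogeneous quadratic gives $\partial_{v_i}\partial_{v_j}q=v_i^{T}Mv_j$, so the inequality to be proved is equivalent to
\[
(v_1^{T}Mv_2)^{2}\;\geq\;(v_1^{T}Mv_1)(v_2^{T}Mv_2).
\]
If $v_1$ and $v_2$ are linearly dependent, the inequality is an equality, so I would assume $v_1,v_2$ are linearly independent and restrict the symmetric bilinear form $\langle x,y\rangle_M\defeq x^{T}My$ to the two-dimensional subspace $V=\mathrm{span}(v_1,v_2)$. The claim then becomes $\det(M|_V)\leq 0$.

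The key step, and the main obstacle, is verifying that $M|_V$ has Lorentzian signature, i.e., exactly one positive and one non-positive eigenvalue. For this I would invoke the Cauchy interlacing theorem: since the ambient matrix $M$ has only one positive eigenvalue, the second-largest eigenvalue of its $2\times 2$ restriction $M|_V$ is forced to be non-positive. On the other hand, $v_1^{T}Mv_1>0$ rules out $M|_V$ being negative semidefinite, so $M|_V$ has exactly one positive eigenvalue and one non-positive eigenvalue. Hence $\det(M|_V)\leq 0$, which rearranges to the desired inequality. The only conceptual check is that condition (H) for $f$ translates cleanly into the spectral statement about $\Hess(q)=M$, and this is immediate because $q$ is itself quadratic, so its Hessian is the constant matrix $M$.
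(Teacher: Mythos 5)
Your proof is correct and follows essentially the same argument as the paper: both form the symmetric $2\times 2$ matrix $\big(\partial_{v_i}\partial_{v_j}\partial_{v_3}\cdots\partial_{v_d}f\big)_{i,j\in\{1,2\}}$, use condition (H) together with an interlacing/inertia argument to show it has at most one positive eigenvalue, use condition (P) to show it has at least one, and conclude the determinant is nonpositive. The only cosmetic difference is that you rule out negative semidefiniteness via the positive diagonal entry $v_1^{T}\Hess(q)v_1>0$, whereas the paper invokes Perron--Frobenius; both routes are valid, and both share the same small looseness in that the $2\times 2$ compression $V^{T}\Hess(q)V$ (with $V=[v_1\;v_2]$) is not a literal principal submatrix of $\Hess(q)$ unless $v_1,v_2$ are coordinate vectors, so one should really invoke the inertia form of interlacing (equivalently Sylvester's law of inertia), as you in effect do by phrasing the conclusion in terms of the signature of the restricted form.
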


\begin{proof}
Consider the symmetric $2\times 2$ matrix
\[
M=\left[
\begin{array}{cc}
\partial_{v_1}\partial_{v_1}\partial_{v_3}\cdots\partial_{v_d}f & \partial_{v_1}\partial_{v_2}\partial_{v_3}\cdots\partial_{v_d}f\\
\partial_{v_2}\partial_{v_1}\partial_{v_3}\cdots\partial_{v_d}f & \partial_{v_2}\partial_{v_2}\partial_{v_3}\cdots\partial_{v_d}f
\end{array}
\right].
\]
By (P), the entries of $M$ are positive, so the Peron--Frobenius Theorem implies that $M$ has \emph{at least} one positive eigenvalue. On the other hand, $M$ is a principal minor of $\Hess(\partial_{v_3}\cdots\partial_{v_d}f)$, which, by (H), has exactly one positive eigenvalue; thus, it follows from Cauchy's Interlacing Theorem that $M$ has \emph{at most} one positive eigenvalue. Therefore $M$ has exactly one positive eigenvalue, implying that the determinant of $M$ is nonpositive, proving the lemma.
\end{proof}

The following result, proved by Br\"and\'en and Leake \cite{BrandenLeake}, is particularly useful for the study of Lorentzian polynomials on cones. We view this result as an effective implementation of the key insights in \cite{OneMoreProof}; in essence, it eliminates the need for one of the induction parameters in \cite{OneMoreProof} because that induction parameter is captured within the recursive nature of Lorentzian polynomials.

\begin{lemma}[\cite{BrandenLeake}, Proposition 2.4]\label{lem:lorentziancheck}
Let $C\subseteq\R_{>0}^n$ be a nonempty open convex cone, and let $f\in\R[x_1,\dots,x_n]$ be a homogeneous polynomial of degree $d$. If
\begin{enumerate}
\item $\partial_{v_1}\cdots\partial_{v_d}f>0$ for all $v_1,\dots,v_d\in C$, 
\item $\Hess\big(\partial_{v_1}\cdots\partial_{v_{d-2}}f\big)$ is irreducible\footnote{An $n\times n$ matrix $M$ is \textbf{irreducible} if the associated adjacency graph---the undirected graph on $n$ labeled vertices with an edge between the $i$th and $j$th vertex whenever the $(i,j)$ entry of $M$ is nonzero---is connected.} and has nonnegative off-diagonal entries for all $v_1,\dots,v_{d-2}\in C$, and
\item $\partial_i f$ is $C$-Lorentzian for all $i=1,\dots,n$,
\end{enumerate}
then $f$ is $C$-Lorentzian.
\end{lemma}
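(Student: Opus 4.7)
The plan is to verify the two defining conditions, \textbf{(P)} and \textbf{(H)}, for $f$ to be $C$-Lorentzian. Condition \textbf{(P)} is exactly hypothesis (1), so it requires no argument. For \textbf{(H)}, fix $v_3,\dots,v_d \in C$ and set $H := \Hess(\partial_{v_3}\cdots\partial_{v_d}f)$, which is a constant symmetric matrix since $\partial_{v_3}\cdots\partial_{v_d}f$ is a quadratic form; the goal is to show that $H$ has exactly one positive eigenvalue.

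The ``at least one positive eigenvalue'' direction is quick: for any $v \in C$, hypothesis (1) applied with $v_1 = v_2 = v$ yields $v^\top H v = 2\,\partial_v\partial_v\partial_{v_3}\cdots\partial_{v_d}f > 0$, so $H$ is not negative semidefinite. Combined with hypothesis (2)---that $H$ is irreducible with nonnegative off-diagonal entries---the Perron--Frobenius theorem applied to $H + cI$ for $c \gg 0$ produces a simple largest eigenvalue $\lambda_{\max}$ with strictly positive Perron eigenvector $u > 0$, and the previous sentence forces $\lambda_{\max} > 0$. Since $H u = \lambda_{\max} u$, the Euclidean complement $u^\perp$ is $H$-invariant, so the remaining task reduces to showing $H$ is negative semidefinite on $u^\perp$.

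The main obstacle is precisely this ``at most one positive eigenvalue'' step, for which I would invoke hypothesis (3). Each $\partial_i f$ is $C$-Lorentzian, so by Lemma~\ref{lem:lorentzian} it satisfies the Alexandrov--Fenchel inequalities; equivalently, each auxiliary Hessian $H^{(i)} := \Hess(\partial_{v_3}\cdots\partial_{v_{d-1}}\partial_i f)$ has exactly one positive eigenvalue. The identity $H = \sum_i v_{d,i}H^{(i)}$ is only a positive combination, and signatures do \emph{not} add under positive combinations, so a direct argument fails. My strategy is proof by contradiction: supposing some $w \in u^\perp$ satisfies $w^\top H w > 0$, the plane $\mathrm{span}(u,w)$ is $H$-positive-definite, so Cauchy interlacing forces each $3\times 3$ principal form of $H$ restricted to $\mathrm{span}(u,w,e_i)$ to have at least two positive eigenvalues; expanding $H = \sum_i v_{d,i}H^{(i)}$ and comparing with the signature of each $H^{(i)}$ should produce the needed contradiction. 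In execution, I expect to follow the Br\"and\'en--Leake deformation approach: since the Lorentzian condition is closed, continuously deform $v_d$ within $C$ toward a tractable base case (e.g., where $f$ factors as a product of linear forms with all-positive coefficients), and use hypothesis (2) to prevent any eigenvalue of $H$ from crossing zero along the path---an eigenvalue crossing zero would force $H$ to be singular with nontrivial kernel, but irreducibility plus nonnegative off-diagonal entries rules out kernels of dimension $\geq 2$, so the signature is preserved along the deformation and inherited from the base case.
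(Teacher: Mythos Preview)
The paper does not prove this lemma; it is quoted verbatim from Br\"and\'en--Leake and used as a black box. So there is no ``paper's own proof'' to compare against, and I can only assess your argument on its merits.

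Your setup through Perron--Frobenius is fine: (P) is hypothesis (1); hypothesis (2) applied to $H$ gives a simple top eigenvalue $\lambda_{\max}$ with strictly positive eigenvector $u$; and $v^\top Hv>0$ for $v\in C$ forces $\lambda_{\max}>0$. The reduction to showing $H$ is negative semidefinite on $u^\perp$ is also correct.

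The gap is the ``at most one positive eigenvalue'' step, and neither of your two sketches closes it. In the first sketch you assume $w\in u^\perp$ with $w^\top Hw>0$, observe that $H$ is positive definite on $\mathrm{span}(u,w)$, and then assert that comparing $3\times 3$ restrictions to $\mathrm{span}(u,w,e_i)$ against the signatures of the $H^{(i)}$ ``should produce the needed contradiction.'' But you never say what the contradiction is, and there is no reason the restriction of $H$ to such a plane controls the signature of $H^{(i)}$---these are different matrices, and the relation $H=\sum_i v_{d,i}H^{(i)}$ does not transfer positive-definite subspaces from one to the other.

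The deformation sketch has two concrete errors. First, deforming $v_d$ within $C$ does not change $f$, so there is no ``base case where $f$ factors as a product of linear forms''; you would at best be moving $H$ through the family $\{\Hess(\partial_{v_3}\cdots\partial_{v_{d-1}}\partial_v f):v\in C\}$, and no member of that family is a priori known to have the right signature (the limits $v\to e_i$, which would give $H^{(i)}$, lie on the boundary of $\R_{>0}^n$ and need not be in $C$). Second, your crossing argument is backwards: irreducibility with nonnegative off-diagonals only makes the \emph{top} eigenvalue simple. It does not prevent $\lambda_2$ from passing through zero, and a single eigenvalue crossing zero produces a one-dimensional kernel, which nothing in hypothesis~(2) excludes. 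So the signature is \emph{not} forced to be constant along the path by your reasoning.

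What your argument is missing is the identity $(Hw)_i = v_d^\top H^{(i)}w$, which comes from commuting $\partial_i$ and $\partial_{v_d}$ in the cubic form $\partial_{v_3}\cdots\partial_{v_{d-1}}f$. In particular $(Hv_d)_i=v_d^\top H^{(i)}v_d>0$ by condition (P) for $\partial_i f$, so each $H^{(i)}$ is positive on $v_d$ and has exactly one positive eigenvalue; the reverse Cauchy--Schwarz inequality for $H^{(i)}$ then reads $(Hw)_i^2\ge (Hv_d)_i\cdot w^\top H^{(i)}w$. Summing against $v_{d,i}>0$ gives a pointwise inequality between $w^\top Hw$ and a quadratic form in $Hw$, and it is \emph{this} inequality (together with the Perron--Frobenius structure from hypothesis~(2)) that the Br\"and\'en--Leake proof exploits to pin down the signature. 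Your proposal never reaches this mechanism.
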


\subsubsection{Lorentzian volume polynomials}

We now discuss how the above discussion of Lorentzian polynomials on cones can be used to study mixed volumes of normal complexes. Let $\Sigma\subseteq N_\R$ be a simplicial $d$-fan, $\omega:\Sigma(d)\rightarrow\R_{>0}$ a weight function, and $*\in\Inn(N_\R)$ an inner product. We assume that $\Cub(\Sigma,*)\neq\emptyset$, in which case the function $\Vol_{\Sigma,\omega,*}:\Cub(\Sigma,*)\rightarrow\R$ is a homogeneous polynomial of degree $d$ in $\R[z_\rho\mid\rho\in\Sigma(1)]$. By Proposition~\ref{prop:mvolchar}(3), we have
\[
\Vol_{\Sigma,\omega,*}(z)=\MVol_{\Sigma,\omega,*}(z,\dots,z).
\]
It then follows from Proposition~\ref{prop:mvolchar}(1) and (2) (and the chain rule) that
\begin{equation}\label{eq:partials}
\partial_{z_1}\cdots\partial_{z_k}\Vol_{\Sigma,\omega,*}(z)=\frac{d!}{d-k!}\MVol_{\Sigma,\omega,*}(z_1,\dots,z_k,\underbrace{z,\dots,z}_{d-k})
\end{equation}
for any $z_1,\dots,z_k\in\Cub(\Sigma,*)$. In particular, in order to prove that $(\Sigma,\omega,*)$ is AF, we now see that it suffices (by Lemma~\ref{lem:lorentzian}) to prove that $\Vol_{\Sigma,\omega,*}$ is $\Cub(\Sigma,*)$-Lorentzian. Thus, Theorem~\ref{thm:reduce} is a consequence of the following stronger result.

\begin{theorem}\label{thm:lorentzian}
Let $\Sigma\subseteq N_\R$ be a simplicial $d$-fan, $\omega:\Sigma(d)\rightarrow\R_{>0}$ a weight function, and $*\in\Inn(N_\R)$ an inner product such that $\Cub(\Sigma,*)\neq\emptyset$. Then $\Vol_{\Sigma,\omega,*}$ is $\Cub(\Sigma,*)$-Lorentzian if the following two conditions are satisfied:
\begin{enumerate}
\item[(i)] $\Sigma^\tau\setminus\{0\}$ is connected for any cone $\tau\in\Sigma(k)$ with $k\leq d-3$;
\item[(ii)] $\mathrm{Hess}\big(\Vol_{\Sigma^\tau,\omega^\tau,*^\tau}(z)\big)$ has exactly one positive eigenvalue for any $\tau\in\Sigma(d-2)$.
\end{enumerate}
\end{theorem}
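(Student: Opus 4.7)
I would prove Theorem~\ref{thm:lorentzian} by induction on $d$, using the Br\"and\'en--Leake criterion (Lemma~\ref{lem:lorentziancheck}) at each inductive step. The base case $d = 2$ is handled directly: property (P) of the Lorentzian definition reduces to positivity of mixed volumes of cubical inputs (Proposition~\ref{prop:positive}), while property (H) is precisely condition (ii) applied to the zero cone $\tau = \{0\} \in \Sigma(0)$ (condition (i) being vacuous since $d - 3 < 0$). The case $d = 1$ is trivial since $\Vol_{\Sigma,\omega,*}$ is then linear with positive coefficients on $\Cub(\Sigma,*)$.

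For the inductive step with $d \geq 3$, I verify the three hypotheses of Lemma~\ref{lem:lorentziancheck} for $f = \Vol_{\Sigma,\omega,*}$ and the cone $\Cub(\Sigma,*)$. Hypothesis (1) follows by iterating Corollary~\ref{cor:derivatives}, which yields $\partial_{v_1}\cdots\partial_{v_d} f = d!\,\MVol_{\Sigma,\omega,*}(v_1,\dots,v_d) > 0$ for $v_i \in \Cub(\Sigma,*)$ via Proposition~\ref{prop:positive}. For hypothesis (3), Corollary~\ref{cor:derivatives} with $k = 0$ gives
\[
\partial_\rho f(z) = d\,\Vol_{\Sigma^\rho,\omega^\rho,*^\rho}(z^\rho),
\]
realizing $\partial_\rho f$ as the pullback of a lower-dimensional volume polynomial along the linear map $z \mapsto z^\rho$. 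Proposition~\ref{prop:facesarenormalcomplexes} sends $\Cub(\Sigma,*)$ into $\Cub(\Sigma^\rho,*^\rho)$ under this map, and the iterated-star identity $(\Sigma^\rho)^{\tau^\rho} = \Sigma^\tau$ for $\rho \preceq \tau$ (which arises from $\pr^\tau = \pr^{\tau^\rho}\circ\pr^\rho$, as in the proof of Proposition~\ref{prop:faceofaface}) shows that conditions (i) and (ii) for $\Sigma$ descend to $(\Sigma^\rho,\omega^\rho,*^\rho)$, so the induction hypothesis gives that $\Vol_{\Sigma^\rho,\omega^\rho,*^\rho}$ is $\Cub(\Sigma^\rho,*^\rho)$-Lorentzian. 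A short argument shows $C$-Lorentzianity is preserved under pullback along any linear map sending $C'$ into $C$: (P) pulls back directly, and the pulled-back Hessian has at most one positive eigenvalue by the standard inertia bound for pullbacks of symmetric forms and at least one by applying (P) to a repeated vector in $C'$.

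The main obstacle is hypothesis (2), concerning the Hessian of $g \defeq \partial_{v_1}\cdots\partial_{v_{d-2}} f$ for fixed $v_1,\dots,v_{d-2} \in \Cub(\Sigma,*)$. Polarization (extending $\MVol$ multilinearly from $\oCub(\Sigma,*)$ to $\R^{\Sigma(1)}$, valid since $\oCub(\Sigma,*)$ has nonempty interior) gives
\[
\Hess(g)_{a,b} = d!\,\MVol_{\Sigma,\omega,*}(e_a, e_b, v_1,\dots,v_{d-2}).
\]
For distinct $a, b \in \Sigma(1)$, I would apply Proposition~\ref{prop:pyramidmixed} twice. Expanding along $e_a$ yields $\MVol_{\Sigma^a,\omega^a,*^a}(e_b^a, v_1^a, \dots, v_{d-2}^a)$; a direct computation using that $w_{a,*}(e_b) = 0$ (since $(e_b)_a = 0$) shows $e_b^a = e_{b^a}$ when $\{a,b\}$ spans a $2$-cone of $\Sigma$ and $e_b^a = 0$ otherwise. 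Expanding once more along $e_{b^a}$ in $\Sigma^a$ and invoking $(\Sigma^a)^{b^a} = \Sigma^\tau$ gives
\[
\Hess(g)_{a,b} = \begin{cases} d!\,\MVol_{\Sigma^\tau,\omega^\tau,*^\tau}(v_1^\tau,\dots,v_{d-2}^\tau) & \text{if } \tau = \cone(u_a, u_b) \in \Sigma(2), \\ 0 & \text{otherwise.} \end{cases}
\]
Since $v_i^\tau \in \Cub(\Sigma^\tau,*^\tau)$ by Proposition~\ref{prop:facesarenormalcomplexes}, the nonzero off-diagonal entries are strictly positive by Proposition~\ref{prop:positive}, yielding nonnegativity of all off-diagonal entries. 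The adjacency graph of $\Hess(g)$ is thus the $1$-skeleton of $\Sigma$---rays joined precisely when they span a $2$-cone---which is connected exactly when $\Sigma \setminus \{0\}$ is; since $d \geq 3$, condition (i) applied to $\tau = \{0\} \in \Sigma(0)$ supplies this connectedness, so $\Hess(g)$ is irreducible. All three hypotheses of Lemma~\ref{lem:lorentziancheck} now hold, closing the induction.
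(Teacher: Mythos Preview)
Your proof is correct and follows essentially the same approach as the paper: induction on $d$ with the Br\"and\'en--Leake criterion (Lemma~\ref{lem:lorentziancheck}), using Corollary~\ref{cor:derivatives}/Proposition~\ref{prop:pyramidmixed} to identify $\partial_\rho f$ with the volume polynomial of $\Sigma^\rho$ and to compute the off-diagonal Hessian entries as mixed volumes on two-dimensional stars $\Sigma^\tau$. Your treatment of hypothesis (3) via the explicit pullback argument is in fact slightly more careful than the paper's, which asserts that $\partial_\rho\Vol_{\Sigma,\omega,*}$ inherits Lorentzianity from $\Vol_{\Sigma^\rho,\omega^\rho,*^\rho}$ without spelling out why the linear change of variables $z \mapsto z^\rho$ preserves both (P) and (H).
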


\begin{proof}
We prove Theorem~\ref{thm:lorentzian} by induction on $d$. 

First consider the base case $d=2$ (in which case Condition (i) is vacuous). Note that $\Vol_{\Sigma,\omega,*}$ satisfies (P) by \eqref{eq:partials} and the positivity of mixed volumes (Proposition~\ref{prop:positive}), while (H) for $\Vol_{\Sigma,\omega,*}$ is equivalent to Condition (ii). Therefore, Theorem~\ref{thm:lorentzian} holds when $d=2$. 

Now let $d>2$ and assume $(\Sigma,\omega,*)$ satisfies Conditions (i) and (ii) in Theorem~\ref{thm:lorentzian}. To prove that $\Vol_{\Sigma,\omega,*}$ is $\Cub(\Sigma,*)$-Lorentzian, we use Lemma~\ref{lem:lorentziancheck}. Translating the three conditions of Lemma~\ref{lem:lorentziancheck} using \eqref{eq:partials}, we must prove that
\begin{enumerate}
\item $\MVol_{\Sigma,\omega,*}(z_1,\dots,z_d)>0$ for all $z_1,\dots,z_d\in\Cub(\Sigma,*)$,
\item $\Hess\big(\MVol_{\Sigma,\omega,*}(z_1,\dots,z_{d-2},z,z)\big)$ is irreducible and has nonnegative off-diagonal entries for all $z_1,\dots,z_{d-2}\in \Cub(\Sigma,*)$, and
\item $\partial_\rho \Vol_{\Sigma,\omega,*}(z)$ is $\Cub(\Sigma,*)$-Lorentzian for all $\rho\in\Sigma(1)$.
\end{enumerate}

Note that (1) is just the positivity of mixed volumes (Proposition~\ref{prop:positive}). To prove (3), note that Proposition~\ref{prop:mvolchar}(3) and Corollary~\ref{cor:derivatives} (with $k=0$) together imply that
\[
\partial_\rho\Vol_{\Sigma,\omega,*}(z)=d\Vol_{\Sigma^\rho,\omega^\rho,*^\rho}(z^\rho).
\]
Applying the induction hypothesis to $(\Sigma^\rho,\omega^\rho,*^\rho)$---which we can do because any star fan of $\Sigma^\rho$ is a star fan of $\Sigma$, so our assumption that $(\Sigma,\omega,*)$ satisfies the two conditions of Theorem~\ref{thm:lorentzian} implies that $(\Sigma^\rho,\omega^\rho,*^\rho)$ also satisfies the two conditions of Theorem~\ref{thm:lorentzian}---implies that $\partial_\rho\Vol_{\Sigma,\omega,*}(z)$ is Lorentzian, verifying (3).

Finally, to prove (2), we use Corollary~\ref{cor:derivatives} to compute
\[
\partial_\rho\MVol_{\Sigma,\omega,*}(z_1,\dots,z_{d-2},z,z)=2\MVol_{\Sigma,\omega,*}(z_1^\rho,\dots,z_{d-2}^\rho,z^\rho).
\]
If $\tau\in\Sigma(2)$ with rays $\rho$ and $\eta$, then 
\[
z^\rho_{\eta^\rho}=z_\eta-w_{\rho,*}(z)*u_\eta=z_\eta-\frac{u_\rho*u_\eta}{u_\rho*u_\rho}z_\rho,
\]
from which it follows that,
\begin{equation}\label{eq:secondderivative1}
\partial_\eta\partial_\rho\MVol_{\Sigma,\omega,*}(z_1,\dots,z_{d-2},z,z)=2\MVol_{\Sigma^\tau,\omega^\tau,*^\tau}(z_1^\tau,\dots,z_{d-2}^\tau)
\end{equation}
On the other hand, if $\rho$ and $\eta$ do not lie on a common cone $\tau\in\Sigma(2)$, then
\begin{equation}\label{eq:secondderivative2}
\partial_\eta\partial_\rho\MVol_{\Sigma,\omega,*}(z_1,\dots,z_{d-2},z,z)=0.
\end{equation}
The positivity of mixed volumes for cubical values, along with \eqref{eq:secondderivative1} and \eqref{eq:secondderivative2}, then implies that $\Hess\big(\MVol_{\Sigma,\omega,*}(z_1,\dots,z_{d-2},z,z)\big)$ has nonnegative off-diagonal entries that are positive whenever the row and column index are the rays of a cone $\tau\in\Sigma(2)$. The first condition in Theorem~\ref{thm:lorentzian} implies that we can travel from any ray of $\Sigma$ to any other ray by passing only through the relative interiors of one- and two-dimensional cones, which then implies that $\Hess\big(\MVol_{\Sigma,\omega,*}(z_1,\dots,z_{d-2},z,z)\big)$ is irreducible, concluding the proof.
\end{proof}

\section{Application: the Heron--Rota--Welsh conjecture}

As an application of our developments regarding mixed volumes of normal complexes, we show in this section how Theorem~\ref{thm:reduce} can be used to prove the Heron--Rota--Welsh conjecture, which states that the coefficients of the characteristic polynomial of any matroid are log-concave. The bridge between matroids and mixed volumes is the Bergman fan; we begin this section by briefly recalling relevant notions regarding matroids and Bergman fans.

\subsection{Matroids and Bergman fans}

A \textbf{(loopless) matroid} $\sM=(E,\cL)$ consists of a finite set $E$, called the \textbf{ground set}, and a collection of subsets $\cL\subseteq 2^E$, called \textbf{flats}, which satisfy the following three conditions:
\begin{enumerate}
\item[(F1)] $\emptyset\in\cL$,
\item[(F2)] if $F_1,F_2\in\cL$, then $F_1\cap F_2\in\cL$, and
\item[(F3)] if $F\in\cL$, then every element of $E\setminus F$ is contained in exactly one flat that is minimal among the flats that strictly contain $F$.
\end{enumerate}

We do not give a comprehensive overview of matroids; rather, we settle for a brief introduction of key concepts. For a more complete treatment, see Oxley's book \cite{Oxley}.

The \textbf{closure} of a set $S\subseteq E$, denoted $\cl(S)$, is the smallest flat containing $S$. A set $I\subseteq E$ is called \textbf{independent} if $\cl(I_1)\subsetneq\cl(I_2)$ for any $I_1\subsetneq I_2\subseteq I$. The \textbf{rank} of a set $S\subseteq E$, denoted $\rk(S)$, is the maximum size of an independent subset of $S$, and the \textbf{rank of $\sM$}, denoted $\rk(\sM)$ is defined to be the rank of $E$.  While we have chosen to characterize matroids in terms of their flats, we note that matroids can also be characterized in terms of their independent sets or their rank function.

A \textbf{flag of flats (of length $k$) in $\sM$} is a chain of the form
\[
\cF=(F_1\subsetneq\cdots\subsetneq F_k)\;\;\;\text{ with }\;\;\;F_1,\dots, F_k\in\cL.
\]
It can be checked from the matroid axioms that every maximal flag has one flat of each rank $0,\dots,\rk(\sM)$. We let $\Delta_\sM$ denote the set of flags of flats, which naturally has the structure of a simplicial complex of dimension $\rk(\sM)+1$. Since every maximal flag contains $\emptyset$ and $E$, we often restrict our attention to studying proper flats. We use the notation $\cL^*=\cL\setminus\{\emptyset,E\}$ for the set  of proper flats and $\Delta_\sM^*$ for the set of flags of proper flats, which is a simplicial complex of dimension $\rk(\sM)-1$.

Given a matroid $\sM$, consider the vector space $\R^E$ with basis $\{v_e\mid e\in E\}$. For each subset $S\subseteq E$, define
\[
v_S=\sum_{e\in S}v_e\in\R^E.
\]
Set $N_\R=\R^E/\R v_E$ and denote the image of $v_S$ in the quotient space $N_\R$ by $u_S$. For each flag $\cF=(F_1\subsetneq\cdots\subsetneq F_k)\in\Delta_\sM^*$, define a polyhedral cone
\[
\sigma_\cF=\R_{\geq 0}\{u_{F_1},\dots,u_{F_k}\}\subseteq N_\R.
\]
The \textbf{Bergman fan of $\sM$}, denoted $\Sigma_\sM$, is the polyhedral fan
\[
\Sigma_\sM=\{\sigma_\cF\mid \cF\in\Delta^*_\sM\}.
\] 
Note that $\Sigma_\sM$ is simplicial, pure of dimension $d=\rk(\sM)-1$, and marked by the vectors $u_F$.

Consider a cone $\sigma_\cF\in\Sigma_\sM(d-1)$ corresponding to a flag
\[
\cF=(F_1\subsetneq\cdots\subsetneq F_{k-1}\subsetneq F_{k+1}\subsetneq\cdots\subsetneq F_d)\;\;\;\text{ with }\;\;\;\rk(F_i)=i.
\]
The $d$-cones containing $\sigma_\cF$ are indexed by flats $F$ with $F_{k-1}\subsetneq F\subsetneq F_{k+1}$. If there are $\ell$ such flats, then (F3) implies that 
\[
\sum_{F\in\cL \atop F_{k-1}\subsetneq F\subsetneq F_{k+1}}u_F=(\ell-1)u_{F_{k-1}}+u_{F_{k+1}}.
\]
Since the right-hand side lies in $N_{\sigma_{\cF},\R}$, this observation implies that $\Sigma_{\sM}$ is balanced (tropical with weights all equal to $1$).

In order to check that Bergman fans are AF, we require a working understanding of the star fans of Bergman fans. Consider a cone $\sigma_\cF$ associated to a flat $\cF=(F_1\subsetneq\cdots\subsetneq F_k)$. Set $F_0=\emptyset$ and $F_{k+1}=E$, and for each $j=0,\dots,k$ consider the \textbf{matroid minor} $\sM[F_j,F_{j+1}]$, which is the matroid on ground set $F_{j+1}\setminus F_j$ with flats of the form $F\setminus F_j$ where $F$ is a flat of $\sM$ satisfying $F_j\subseteq F\subseteq F_{j+1}$. Notice that the star fan $\Sigma_\sM^{\sigma_{\cF}}$ lives in the quotient space
\[
N_\R^{\sigma_\cF}=\frac{N_\R}{\R\{u_{F_1},\dots,u_{F_{k}}\}}=\frac{\R^E}{\R\{v_{F_1},\dots,v_{F_{k+1}}\}} = \bigoplus_{j=0}^k\frac{\R^{F_{k+1}\setminus F_k}}{\R v_{F_{k+1}\setminus F_k}},
\]
and one checks that this natural isomorphism of vector spaces identifies the star of $\Sigma_\sM$ at $\sigma_{\cF}$ as the product of the Bergman fans of the associated matroid minors:
\begin{equation}\label{eq:product}
\Sigma_\sM^{\sigma_\cF}=\prod_{j=0}^k\Sigma_{\sM[F_j,F_{j+1}]}.
\end{equation}

\subsection{Bergman fans are AF}

We are now ready to use Theorem~\ref{thm:reduce} to prove that Bergman fans of matroids are AF.

\begin{theorem}\label{thm:bergman}
Let $\sM$ be a matroid of rank $d+1$ and let $\Sigma_\sM\subseteq N_\R$ be the associated Bergman fan. If $*\in\Inn(N_\R)$ is any inner product with $\Cub(\Sigma_\sM,*)\neq\emptyset$, then $(\Sigma_\sM,*)$ is AF.
\end{theorem}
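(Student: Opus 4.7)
The plan is to apply Theorem~\ref{thm:reduce} to the triple $(\Sigma_\sM,1,*)$; here $\omega\equiv 1$ by balancedness of the Bergman fan. The structural workhorse is the product decomposition \eqref{eq:product}, which identifies each star $\Sigma_\sM^{\sigma_\cF}$ as the product of the Bergman fans of the matroid minors $\sM[F_j,F_{j+1}]$.

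For condition (i), fix $\tau\in\Sigma_\sM(k)$ with $k\leq d-3$, so $\dim\Sigma_\sM^\tau\geq 3$ and the factor dimensions in the product decomposition sum to at least $3$. If two or more factors have positive dimension, then $\Sigma_\sM^\tau\setminus\{0\}$ is path-connected by a direct argument: any two nonzero points can be joined via a path that slides through the coordinate subspaces while always keeping at least one coordinate nonzero. In the remaining case a single factor has dimension $\geq 3$, hence is the Bergman fan of a matroid minor of rank $\geq 4$; its link is the order complex of the proper non-trivial flats of a geometric lattice, which is pure shellable by a classical result of Bj\"orner and therefore connected.

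For condition (ii), fix $\tau\in\Sigma_\sM(d-2)$, so $\dim\Sigma_\sM^\tau=2$ and the factor dimensions sum to $2$. By Theorem~\ref{thm:vol=deg} the volume polynomial equals $\deg_{\Sigma_\sM^\tau}(D(z)^2)$, whose Hessian is $2H$ for the Chow-theoretic pairing $H_{\rho\eta}=\deg(X_\rho X_\eta)$ (notably independent of $*^\tau$). There are two cases. In the \emph{product case}, two factors in \eqref{eq:product} have dimension $1$, so $\Sigma_\sM^\tau\cong\Sigma_1\times\Sigma_2$ with each $\Sigma_i$ a bouquet of rays. Since $A^2$ vanishes for a $1$-dimensional fan, the tensor-product description of $A^\bullet(\Sigma_1\times\Sigma_2)$ forces $\deg(X_\rho X_{\rho'})=0$ whenever $\rho,\rho'$ lie in the same factor, while $\deg(X_\rho X_\eta)=1$ by balancedness whenever $\rho\in\Sigma_1(1)$ and $\eta\in\Sigma_2(1)$. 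Hence $H$ takes the block form $\left(\begin{smallmatrix}0 & J \\ J^\top & 0\end{smallmatrix}\right)$ with $J$ the all-ones matrix, which has rank $2$ and exactly one positive eigenvalue. In the \emph{rank-$3$ case}, one factor is the Bergman fan $\Sigma_{\sM'}$ of a rank-$3$ matroid minor, and condition (ii) reduces to showing that the intersection pairing on $A^1(\Sigma_{\sM'})$ has exactly one positive eigenvalue---the Hodge--Riemann property in degree $1$ for the Chow ring of $\sM'$.

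The main obstacle is the rank-$3$ case. My plan is to exploit the explicit structure of $A^\bullet(\Sigma_{\sM'})$: the linear relations force $X_e+\sum_{L\ni e}X_L$ to be a single class $\alpha\in A^1$ independent of the atom $e$, yielding explicit formulas for every entry of $H$ in terms of the point--line incidence combinatorics of $\sM'$. The off-diagonal entries of $H$ are manifestly nonnegative (equal to $1$ on incident flag pairs and $0$ otherwise), so $H$ is a Metzler matrix supported on the point--line incidence graph of $\sM'$. A Perron--Frobenius-type argument for Metzler matrices on the (connected) incidence graph, combined with Cauchy interlacing against a $2\times 2$ principal minor indexed by an incident flag $(e,L)$ of signature $(1,1)$, should pin down the signature of $H$ as $(1,*,\dots,*)$. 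Once this case is settled, both hypotheses of Theorem~\ref{thm:reduce} are verified, and the theorem yields that $(\Sigma_\sM,*)$ is AF.
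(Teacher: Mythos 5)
Your overall architecture matches the paper's: apply Theorem~\ref{thm:reduce} with $\omega\equiv 1$ (balancedness), use the product decomposition~\eqref{eq:product} to reduce conditions (i) and (ii) to statements about Bergman fans of matroid minors, and split condition (ii) into a rank-$3$ case and a product-of-two-rank-$2$ case. Your handling of (i) is slightly different in flavor --- the paper gives a direct hands-on construction of a path of rays and $2$-cones (using (F2) and (F3)), while you invoke Bj\"orner's shellability of the order complex of proper flats for the single-factor case and a topological sliding argument in the multi-factor case --- but both routes are fine. Your treatment of the product case of (ii) is essentially the paper's: the Hessian has block form $\begin{pmatrix}0 & J\\ J^\top & 0\end{pmatrix}$, which has signature $(1,1,0,\dots,0)$.

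The rank-$3$ case of (ii) is where your proposal has a genuine gap, and you flag it yourself as ``the main obstacle.'' The Metzler/Perron--Frobenius/interlacing strategy does not close it: Perron--Frobenius for an irreducible Metzler matrix gives that the top eigenvalue is real, simple, and has a positive eigenvector, but it puts \emph{no upper bound} on the number of positive eigenvalues. Likewise, Cauchy interlacing against a $2\times 2$ principal minor of signature $(1,1)$ tells you the full matrix has \emph{at least} one negative eigenvalue, not that it has at most one positive one --- interlacing transfers ``at most one positive eigenvalue'' from the full matrix to a principal minor (which is exactly how Lemma~\ref{lem:lorentzian} uses it), not the other way around. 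In other words, the very inequality you need is the one interlacing cannot supply. What is missing is a direct proof of the Hodge--Riemann relations in degree $1$ for rank-$3$ matroids. The paper does this by an explicit computation: it shows that for the intersection form on a rank-$3$ Bergman fan, the quadratic form $\deg_{\Sigma_\sM}(D(z)^2)$ factors as a single positive square minus a sum of squares,
\[
\deg_{\Sigma_\sM}(D(z)^2)=\Big(\sum_{\rk(F)=1}z_F\Big)^2-\sum_{\rk(G)=2}\Big(z_G-\sum_{F\subsetneq G}z_F\Big)^2,
\]
using only the computation of $\deg(X_F X_G)$ and the fact that any two distinct rank-one flats lie in a unique rank-two flat. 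Sylvester's Law of Inertia then immediately gives signature $(1,\,*,\ldots)$. Replacing your Perron--Frobenius plan with this explicit diagonalization (or any other proof of Hodge--Riemann in degree $1$ for the rank-$3$ Chow ring) would complete the argument.
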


\begin{remark}
We are assuming the weight function $\omega$ is equal to $1$ because, as noted in the previous subsection, $\Sigma_\sM$ is balanced. Thus, we omit $\omega$ from the notation in this section.
\end{remark}

To prove Theorem~\ref{thm:bergman}, we verify the two conditions of Theorem~\ref{thm:reduce}. We accomplish this through the following three lemmas. The first lemma verifies that Bergman fans satisfy (a slight strengthening of) Condition (i) of Theorem~\ref{thm:bergman}.

\begin{lemma}\label{lem:cond1}
$\Sigma_\sM^{\sigma_\cF}\setminus\{0\}$ is connected for any cone $\sigma_\cF\in\Sigma_\sM(k)$ with $k\leq d-2$.
\end{lemma}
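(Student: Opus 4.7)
The plan is to exploit the product decomposition \eqref{eq:product} of the star fan together with a classical strong-connectivity property of matroid flag complexes. Writing $\cF=(F_1\subsetneq\cdots\subsetneq F_k)$, setting $F_0=\emptyset$, $F_{k+1}=E$, and $r_j=\rk(F_{j+1})-\rk(F_j)\geq 1$, the star fan factors as $\Sigma_\sM^{\sigma_\cF}=\prod_{j=0}^{k}\Sigma_{\sM[F_j,F_{j+1}]}$, with total dimension $\sum_j(r_j-1)=d-k\geq 2$, and with a factor contributing only the origin exactly when $r_j=1$.

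The first step is to show that $\Sigma_\sM^{\sigma_\cF}$ is connected in codimension one, meaning any two of its maximal cones are joined by a gallery of maximal cones in which consecutive ones share a codimension-one face. Galleries in a product fan are built by pivoting one factor at a time along a gallery in that factor, so it suffices to prove codimension-one connectedness for each non-trivial factor. Each non-trivial factor is the Bergman fan of a loopless matroid of rank at least $2$, where the claim reduces to the classical combinatorial fact that any two maximal flags of proper flats can be linked by a sequence of maximal flags differing in exactly one flat at a time. I would prove this as an auxiliary sublemma by induction on rank, using the matroid basis-exchange axiom to pivot on the position at which two given flags first diverge; alternatively, one may invoke Bj\"{o}rner's shellability of matroid order complexes.

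The second step is purely topological. Each maximal cone of the star fan has dimension $d-k\geq 2$, hence remains path-connected after removing the origin. Adjacent maximal cones share a face of dimension $d-k-1\geq 1$, whose relative interior is nonempty and disjoint from $\{0\}$, providing a continuous transition between cones that avoids the origin. Concatenating interior arcs within each cone with transitions across shared codimension-one faces along any gallery produced by step one yields a path between any two chosen points of $\Sigma_\sM^{\sigma_\cF}\setminus\{0\}$.

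The main obstacle is the combinatorial content of step one---strong gallery-connectedness of the flag complex of a loopless matroid of rank at least $2$. Although classical, it requires a short inductive argument using basis exchange, with some bookkeeping when some $r_j=1$ (trivial factors contribute no cones of positive dimension and must simply be tracked in the product gallery). Once this is in place, the topological step is formal, and the dimensional hypothesis $k\leq d-2$ is what guarantees both that the product has at least one non-trivial factor and that maximal cones have dimension at least $2$, which is exactly where removal of the origin leaves each maximal cone connected.
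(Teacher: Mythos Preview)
Your argument is correct, but it takes a different route from the paper's proof. The paper works at the level of the \emph{1-skeleton}: it shows directly that any two rays of $\Sigma_\sM$ (for $\rk(\sM)\geq 3$) can be joined by an alternating sequence of rays and $2$-cones, using nothing more than the flat axioms (F2) and (F3) --- specifically, that $F\cap F'$ is a flat and that any two rank-one flats lie in a unique rank-two flat. The product case is then handled by a short case analysis (rays in different factors are joined via their product $2$-cone; rays in the same factor are joined either by the rank-$\geq 3$ argument or by routing through a second factor).

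You instead work at the level of the \emph{facet--ridge graph}: you invoke gallery-connectedness of the flag complex of a loopless matroid (equivalently, shellability of the order complex of $\cL^*$) to connect maximal cones, and then use the dimensional hypothesis $d-k\geq 2$ to see that maximal cones and their shared codimension-one faces remain connected once the origin is removed. This is a valid and conceptually clean argument, and the product step is entirely formal. The trade-off is that you are importing a classical but nontrivial structural result about matroids (which you sketch via basis exchange or cite Bj\"orner for), whereas the paper's proof is completely self-contained and uses only the three flat axioms already stated in the paper. Your approach generalizes more readily to other pure shellable complexes; the paper's is shorter and keeps the prerequisites minimal.
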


\begin{proof}
We begin by arguing that $\Sigma_\sM\setminus\{0\}$ is connected for any matroid of rank at least $3$. It suffices to prove that, for any two rays $\rho_F,\rho_{F'}\in\Sigma_\sM(1)$ associated to flats $F,F'\in\cL^*$, there are sequences $\rho_1,\dots,\rho_\ell\in\Sigma_\sM(1)$ and $\tau_1,\dots,\tau_{\ell+1}\in\Sigma_\sM(2)$ such that
\[
\rho_F\prec\tau_1\succ\rho_1\prec\cdots\succ\rho_\ell\prec\tau_{\ell+1}\succ\rho_{F'}.
\]
If $F\cap F'=G\neq\emptyset$, then $G\in\cL^*$ by (F2) and the following is such a sequence
\[
\rho_F\prec\tau_{G\subsetneq F}\succ\rho_G\prec\tau_{G\subsetneq F'}\succ\rho_{F'}.
\]
If, on the other hand, $F\cap F'=\emptyset$, choose rank-one flats $G\subseteq F$ and $G'\subseteq F'$. By (F3), there is exactly one rank-two flat $H$ that contains $G$ and $G'$, so we can construct a sequence
\[
(\rho_F\prec\tau_{G\subsetneq F}\succ)\rho_G\prec\tau_{G\subsetneq H}\succ\rho_{H}\prec\tau_{G'\subsetneq H}\succ\rho_{G'}(\prec\tau_{G'\subsetneq F'}\succ\rho_{F'}),
\]
where the parenthetical pieces should be omitted if $G=F$ or $G'=F'$.

Now consider any star fan $\Sigma_{\sM}^{\sigma_\cF}$ where $\cF=(F_1\subsetneq\cdots\subsetneq F_k)$ with $k\leq d-2$. Notice that such a star fan has dimension at least two, and we can write it as a product of Bergman fans on matroid minors
\[
\Sigma_\sM^{\sigma_\cF}=\prod_{j=0}^k\Sigma_{\sM[F_j,F_{j+1}]}.
\]
Consider two rays $\rho,\rho'\in\Sigma_\sM^{\sigma_\cF}(1)$. If the two rays happen to come from different factors in the product, then we can connect them through the sequence
\[
\rho\prec\rho\times\rho'\succ\rho'.
\]
If, on the other hand, they lie in the same factor, there are two cases to consider. If the matroid minor of the factor that the rays lie in has rank at least $3$, then the rays can be connected via the argument above. If, on the other hand, the matroid minor has rank $2$, then one of the other matroid minors must also have rank at least $2$. Choosing any ray $\rho''$ in the Bergman fan of the second matroid minor, we can connect $\rho$ and $\rho'$ through the sequence
\[
\rho\prec\rho\times\rho''\succ\rho''\prec\rho'\times\rho''\succ\rho'.\qedhere
\]
\end{proof}

In order to verify Condition (ii) of Theorem~\ref{thm:reduce}, there are two cases to consider, depending on whether the two-dimensional star fan in question is, itself, a Bergman fan, or whether it is the product of two one-dimensional Bergman fans. In both cases, we use the fact that, in order to prove that the Hessian of a quadratic form $f\in\R[x_1,\dots,x_n]$ has exactly one eigenvalue, it suffices (by Sylvester's Law of Inertia) to find an invertible change of variables $y_1(x),\dots,y_n(x)$ such that
\[
f=\sum_{i=1}^na_iy_i(x)^2
\]
with exactly one positive $a_i$. We now consider the two cases in the following two lemmas.

\begin{lemma}\label{lem:cond2a}
If $\sM$ is a rank-three matroid, then the Hessian of $\deg_{\Sigma_\sM}(D(z)^2)$ has exactly one positive eigenvalue.
\end{lemma}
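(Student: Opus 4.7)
The plan is to realize the quadratic form $Q(z) := \deg_{\Sigma_\sM}(D(z)^2)$ as an explicit signed sum of squares of linear forms, from which the signature of its Hessian can be read off directly. Label the rays of $\Sigma_\sM$ by proper flats, classified as \emph{points} $p$ (rank one) and \emph{lines} $\ell$ (rank two), and let $m_p$ denote the number of lines through $p$. Since a pair of rays is contained in a common $2$-cone of $\Sigma_\sM$ precisely when it is of the form $\{p,\ell\}$ with $p \subseteq \ell$, the incomparability of distinct points (respectively, distinct lines) in the flat lattice forces $X_p X_{p'} = X_\ell X_{\ell'} = 0$ in $A^\bullet(\Sigma_\sM)$, while $\deg(X_p X_\ell) = 1$ when $p \subseteq \ell$.

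To compute the diagonal degrees, I would use the linear relations $\alpha_i = \alpha_j$ in $A^1(\Sigma_\sM)$, where $\alpha_i := \sum_{F \in \cL^*,\, i \in F} X_F = X_{\cl(i)} + \sum_{\ell \supsetneq \cl(i)} X_\ell$. For a point $p$, choosing $i \in p$ and $j \in q$ with $q \neq p$ and multiplying the relation by $X_p$, all terms vanish except $X_p^2$, the $m_p$ products $X_p X_\ell$ with $\ell \supseteq p$, and the single term $X_p X_{\ell^*}$, where $\ell^* := \cl(p \cup q)$ is the unique rank-two flat through $p$ and $q$ (existence and uniqueness follow from the observation that $\rk(p \cup q) = 2$ in any rank-three matroid). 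This yields $\deg(X_p^2) = 1 - m_p$. An analogous computation, using $i \in \ell$ and $j \notin \ell$ and multiplying by $X_\ell$, yields $\deg(X_\ell^2) = -1$.

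Substituting these degrees into $Q(z) = \sum_{\rho,\eta} z_\rho z_\eta \deg(X_\rho X_\eta)$ and completing the square on each $z_\ell$---using the identity
\[
\sum_\ell \Big(\sum_{p \subseteq \ell} z_p\Big)^2 = \Big(\sum_p z_p\Big)^2 + \sum_p (m_p - 1) z_p^2,
\]
which holds because every pair of distinct points lies on exactly one line---produces the clean decomposition
\[
Q(z) = \Big(\sum_p z_p\Big)^2 - \sum_\ell \Big(z_\ell - \sum_{p \subseteq \ell} z_p\Big)^2.
\]
If $Q$ were positive definite on a $2$-dimensional subspace, then the single linear form $L(z) := \sum_p z_p$ would have to be injective on it, which is impossible; so $\mathrm{Hess}(Q)$ has at most one positive eigenvalue. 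To exhibit a direction on which $Q > 0$, I would pick any point $p$ and take $z_p = 1$, $z_\ell = 1$ for every $\ell \supseteq p$, and all other coordinates zero; then $L(z) = 1$ and every term in the negative sum vanishes, giving $Q(z) = 1$. Hence the Hessian has exactly one positive eigenvalue. The main obstacle is unwinding the linear relations cleanly to arrive at $\deg(X_p^2) = 1 - m_p$ and $\deg(X_\ell^2) = -1$; once these are in hand, the sum-of-squares decomposition drops out in one line.
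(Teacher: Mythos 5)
Your proof is correct and follows essentially the same route as the paper: the same diagonal degree computations $\deg(X_p^2)=1-m_p$ and $\deg(X_\ell^2)=-1$, and the identical sum-of-squares decomposition $Q(z)=\bigl(\sum_p z_p\bigr)^2-\sum_\ell\bigl(z_\ell-\sum_{p\subseteq\ell}z_p\bigr)^2$. The only variation is cosmetic: the paper concludes by citing Sylvester's Law of Inertia (the linear forms $\sum_p z_p$ and $z_\ell-\sum_{p\subseteq\ell}z_p$ are linearly independent, since each $z_\ell$ appears in exactly one form), whereas you give a direct two-line argument for the signature; both are valid.
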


\begin{proof}
For a flat $F\in\cL^*$, we use the shorthand $X_{F}=X_{\rho_F}$ and $z_F=z_{\rho_F}$. In order to compute $\deg_{\Sigma_\sM}(D(z)^2)$, we must compute $\deg_{\Sigma_\sM}(X_FX_G)$ for any two flats $F,G\in\cL^*$. If $F\subsetneq G$, then the degree is one, by definition of the degree function, and if $F$ and $G$ are incomparable, then the degree is zero. Thus, it remains to compute the degree of the squared terms. Using the definition of $A^\bullet(\Sigma_{\sM})$ and the flat axioms, the reader is encouraged to verify that  $\deg_{\Sigma_\sM}(X_F^2)=1-|\{G\in\cL^*\mid F\subsetneq G\}|$ if $\rk(F)=1$ and $\deg_{\Sigma_\sM}(X_G^2)=-1$ if $\rk(G)=2$. It follows that
\[
\deg_{\Sigma_\sM}(D(z)^2)=2\sum_{F,G\in\cL^*\atop F\subsetneq G} z_Fz_G+\sum_{F\in\cL*\atop \rk(F)=1}z_F^2-\sum_{F,G\in\cL^*\atop F\subsetneq G}z_F^2-\sum_{G\in\cL*\atop \rk(G)=2}z_G^2.
\]
By creatively organizing the terms, we can rewrite this as
\[
\deg_{\Sigma_\sM}(D(z)^2)=\Big(\sum_{F\in\cL^*\atop\rk(F)=1}z_F\Big)^2-\sum_{G\in\cL^*\atop \rk(G)=2}\Big(z_G-\sum_{F\in\cL^*\atop F\subsetneq G}z_F\Big)^2,
\]
where the only key matroid assertion used in the equivalence of these two formulas is that there exists a unique rank-two flat containing any two distinct rank-one flats. Sylvester's Law of Inertia implies that the Hessian of this quadratic form has exactly one positive eigenvalue.
\end{proof}

\begin{lemma}\label{lem:cond2b}
If $\sM$ and $\sM'$ are rank-two matroids, then the Hessian of $\deg_{\Sigma_{\sM}\times\Sigma_{\sM'}}(D(z)^2)$ has exactly one positive eigenvalue.
\end{lemma}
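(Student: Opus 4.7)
The plan is to compute $\deg_{\Sigma_\sM \times \Sigma_{\sM'}}(D(z)^2)$ explicitly and show it factors as a product of two linearly independent linear forms. Since $\sM$ and $\sM'$ both have rank two, their Bergman fans have dimension $d = 1$, so their only proper flats are atoms (which index both the rays and the maximal cones); in particular $A^2(\Sigma_\sM) = A^2(\Sigma_{\sM'}) = 0$.

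I would first record the product structure of the Chow ring: $A^\bullet(\Sigma_\sM \times \Sigma_{\sM'}) \cong A^\bullet(\Sigma_\sM) \otimes A^\bullet(\Sigma_{\sM'})$. This follows because the rays of the product split between the two factors, the squarefree monomial ideal $\I$ splits accordingly, and the linear ideal $\J$ splits via the direct sum decomposition $M_\R^{\Sigma_\sM \times \Sigma_{\sM'}} = M_\R^{\Sigma_\sM} \oplus M_\R^{\Sigma_{\sM'}}$. Since both Bergman fans are balanced with all weights equal to $1$, the degree map on the product satisfies $\deg_{\Sigma_\sM \times \Sigma_{\sM'}}(X_F Y_{F'}) = 1$ for any atoms $F \in \cL^*(\sM)$ and $F' \in \cL^*(\sM')$, where $X_F$ and $Y_{F'}$ denote the Chow generators coming from the two factors.

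Writing $D(z) = \sum_F z_F X_F + \sum_{F'} z'_{F'} Y_{F'}$, the pure-$X$ and pure-$Y$ contributions to $D(z)^2$ live in $A^2(\Sigma_\sM) \otimes A^0$ and $A^0 \otimes A^2(\Sigma_{\sM'})$ respectively, both of which vanish. Only the cross term survives, and applying the degree map yields
\[
\deg_{\Sigma_\sM \times \Sigma_{\sM'}}(D(z)^2) = 2 \sum_{F, F'} z_F z'_{F'} = 2 a b,
\]
where $a = \sum_F z_F$ and $b = \sum_{F'} z'_{F'}$.

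Finally, the identity $2ab = \tfrac{1}{2}(a+b)^2 - \tfrac{1}{2}(a-b)^2$, together with the linear independence of the forms $a+b$ and $a-b$ in the coordinates $(z_F, z'_{F'})$, allows Sylvester's Law of Inertia to conclude that the Hessian has signature $(1, 1, 0, \dots, 0)$, i.e.\ exactly one positive eigenvalue. The main subtlety I anticipate is cleanly justifying the product decomposition of the Chow ring together with the multiplicativity of the degree map on top classes; once those are in hand, the rest is elementary linear algebra.
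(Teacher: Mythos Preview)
Your proposal is correct and follows essentially the same approach as the paper: both compute $\deg_{\Sigma_\sM\times\Sigma_{\sM'}}(D(z)^2)=2ab$ with $a=\sum z_F$ and $b=\sum z'_{F'}$, rewrite it as $\tfrac{1}{2}(a+b)^2-\tfrac{1}{2}(a-b)^2$, and invoke Sylvester's Law of Inertia. The only cosmetic difference is that you justify the vanishing of the same-factor terms via the K\"unneth decomposition and $A^2(\Sigma_\sM)=0$, whereas the paper leaves the computation $\deg(X_\rho X_\eta)=0$ for $\rho,\eta$ in the same factor as a direct exercise.
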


\begin{proof}
By definition of $A^\bullet(\Sigma_{\sM}\times\Sigma_{\sM'})$, the reader is encouraged to verify that
\[
\deg_{\Sigma_{\sM}\times\Sigma_{\sM'}}(X_\rho X_\eta)=\begin{cases}
0 & \rho,\eta\in\Sigma_{\sM}(1)\text{ or }\rho,\eta\in\Sigma_{\sM'}(1),\\
1 & \rho\in\Sigma_{\sM}(1)\text{ and }\eta\in\Sigma_{\sM'}(1).
\end{cases}
\]
Therefore,
\[
\deg_{\Sigma_{\sM}\times\Sigma_{\sM'}}(D(z)^2)=\sum_{\rho\in\Sigma_{\sM}(1),\;\eta\in\Sigma_{\sM'}(1)}2z_\rho z_\eta,
\]
which can be rewritten as
\[
\deg_{\Sigma_{\sM}\times\Sigma_{\sM'}}(D(z)^2)=\frac{1}{2}\Big(\sum_{\rho\in\Sigma_{\sM}(1)}z_\rho+\sum_{\eta\in\Sigma_{\sM'}(1)}z_\rho\Big)^2-\frac{1}{2}\Big(\sum_{\rho\in\Sigma_{\sM}(1)}z_\rho-\sum_{\eta\in\Sigma_{\sM'}(1)}z_\rho\Big)^2.
\]
Sylvester's Law of Inertia implies that the Hessian of this quadratic form has exactly one positive eigenvalue.
\end{proof}

We now have all the ingredients we need to prove Theorem~\ref{thm:bergman}.

\begin{proof}[Proof of Theorem~\ref{thm:bergman}]
We prove that Bergman fans satisfy the two conditions of Theorem~\ref{thm:reduce}. That Bergman fans satisfy Condition (i) is the content of Lemma~\ref{lem:cond1}. To prove Condition (ii), we first note that, since Bergman fans are balanced, their star fans are also balanced, so Theorem~\ref{thm:vol=deg} implies that the volume polynomials in Condition (ii) are independent of $*$ and are equal to
\[
\deg_{\Sigma_\sM^{\sigma_\cF}}(D(z)^2).
\]
By the product decomposition of star fans given in \eqref{eq:product}, $\Sigma_\sM^{\sigma_\cF}$ is either a two-dimensional Bergman fan or a product of two one-dimensional Bergman fans; in the former case, the Hessian of the volume polynomial has exactly one positive eigenvalue by Lemma~\ref{lem:cond2a}, and in the latter case, by Lemma~\ref{lem:cond2b}.
\end{proof}

\subsection{Revisiting the Heron--Rota--Welsh Conjecture}

The \textbf{characteristic polynomial} of a matroid $\sM=(E,\cL)$ can be defined by
\[
\chi_\sM(\lambda)=\sum_{S\subseteq E}(-1)^{|S|}\lambda^{\rk(\sM)-\rk(S)}.
\]
It can be checked that $\chi_\sM(\lambda)$ has a root at $\lambda=1$ for any positive-rank matroid, and the \textbf{reduced characteristic polynomial} is defined by
\[
\overline\chi_\sM(\lambda)=\frac{\chi_\sM(\lambda)}{\lambda-1}.
\]
We use the notation $\mu^a(\sM)$ and $\overline\mu^a(\sM)$ for the (unsigned) coefficients of these polynomials:
\[
\chi_\sM(\lambda)=\sum_{a=0}^{\rk(\sM)}(-1)^a\mu^a(\sM)\lambda^{\rk(\sM)-a}\;\;\;\text{ and }\;\;\;\overline\chi_\sM(\lambda)=\sum_{a=0}^{\rk(\sM)-1}(-1)^a\overline\mu^a(\sM)\lambda^{\rk(\sM)-1-a}.
\] 
The Heron--Rota--Welsh Conjecture, developed in \cite{Rota,Heron,Welsh}, asserts that the sequence of nonnegative integers $\mu^0(\sM),\dots,\mu^{\rk(\sM)}(\sM)$ is unimodal and log-concave:
\[
0\leq \mu^0(\sM)\leq \dots\leq \mu^k(\sM)\geq\cdots\geq\mu^{\rk(\sM)}(\sM)\geq 0\;\;\;\text{ for some }\;\;\;k\in\{0,\dots,\rk(\sM)\}
\]
and
\[
\mu^k(\sM)^2\geq\mu^{k-1}(\sM)\mu^{k+1}(\sM)\;\;\;\text{ for every }\;\;\;k\in\{1,\dots,\rk(\sM)-1\}.
\]
The Heron--Rota--Welsh Conjecture was first proved by Adiprasito, Huh, and Katz \cite{AHK}. Our aim here is to show how this result also follows from the developments in this paper.

It is elementary to check that the unimodality and log-concavity of the coefficients of the characteristic polynomial is implied by the analogous properties for the coefficients of the reduced characteristic polynomial. The bridge from characteristic polynomials to the content of this paper, then, is a result of Huh and Katz \cite[Proposition~5.2]{HuhKatz} (see also \cite[Proposition~9.5]{AHK} and \cite[Proposition~3.11]{DastidarRoss}), which asserts that
\[
\overline\mu^a(\sM)=\deg_{\Sigma_\sM}(\alpha^{d-a}\beta^a)
\]
where $\rk(\sM)=d+1$ and $\alpha,\beta\in A^1(\Sigma_\sM)$ are defined by
\[
\alpha=\sum_{e_0\in F}X_F\;\;\;\text{ and }\;\;\;\beta=\sum_{e_0\notin F}X_F
\]
for some $e_0\in E$ (these Chow classes are independent of the choice of $e_0$).

Choose any $e_0\in E$, and let $*\in\Inn(N_\R)$ be the inner product with orthonormal basis $\{u_e\mid e\neq e_0\}\subseteq N_\R=\R^E/\R u_E$. For two flats $F_1,F_2\in\cL^*$, we compute
\[
u_{F_1}*u_{F_2}=\begin{cases}
|F_1\cap F_2| & e_0\notin F_1\text{ and } e_0\notin F_2,\\
-|F_1\cap F_2^c| & e_0\notin F_1\text{ and } e_0\in F_2,\\
|F_1^c\cap F_2^c| & e_0\in F_1\text{ and } e_0\in F_2.\\
\end{cases}
\]
Define $z^\alpha,z^\beta\in\R^{\Sigma_\sM(1)}=\R^{\cL^*}$ by
\[
z^\alpha_F=\begin{cases}
1 & e_0\in F,\\
0 & e_0\notin F,
\end{cases}
\;\;\;\text{ and }\;\;\;
z^\beta_F=\begin{cases}
1 & e_0\notin F,\\
0 & e_0\in F,
\end{cases}
\]
so that $D(z^\alpha)=\alpha$ and $D(z^\beta)=\beta$ in $A^1(\Sigma_\sM)$. The following lemma allows us to connect characteristic polynomials to mixed volumes of normal complexes.

\begin{lemma}
$z^\alpha,z^\beta\in\oCub(\Sigma_\sM,*)$.
\end{lemma}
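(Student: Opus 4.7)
The plan is to unpack the definition of pseudocubical directly. Fix a cone $\sigma_\cF \in \Sigma_\sM$ indexed by a flag $\cF = (F_1 \subsetneq \cdots \subsetneq F_k)$ of proper flats. I need to exhibit the unique vector $w_{\sigma_\cF,*}(z^\alpha) \in N_{\sigma_\cF,\R}$ characterized by $w_{\sigma_\cF,*}(z^\alpha)*u_{F_i} = z^\alpha_{F_i}$ for every $i$ and argue that it lies in the closed cone $\sigma_\cF$; the same then has to be done for $z^\beta$.

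Based on small examples, my guess is that each of these $w$-vectors can be written as a nonnegative scalar multiple of a single ray generator of $\sigma_\cF$. Specifically, I would propose
\[
w := \tfrac{1}{|F_k^c|}\, u_{F_k} \text{ if } e_0 \in F_k, \text{ and } w := 0 \text{ if } e_0 \notin F_k,
\]
as the candidate for $w_{\sigma_\cF,*}(z^\alpha)$, and
\[
w := \tfrac{1}{|F_1|}\, u_{F_1} \text{ if } e_0 \notin F_1, \text{ and } w := 0 \text{ if } e_0 \in F_1,
\]
as the candidate for $w_{\sigma_\cF,*}(z^\beta)$. Because $F_1, F_k \in \cL^*$, the denominators $|F_1|$ and $|F_k^c|$ are strictly positive, so each candidate is manifestly a nonnegative combination of the ray generators of $\sigma_\cF$; in particular it lies in both $N_{\sigma_\cF,\R}$ and $\sigma_\cF$. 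By the uniqueness statement implicit in the definition of $w_{\sigma_\cF,*}$, it then suffices to verify the $k$ defining equations.

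Those verifications are short calculations using the inner-product formulas displayed just above the lemma together with the nesting $F_1 \subseteq \cdots \subseteq F_k$. For $z^\alpha$ with $e_0 \in F_k$: if $e_0 \in F_i$ then $u_{F_i}*u_{F_k} = |F_i^c \cap F_k^c| = |F_k^c|$ (since $F_k^c \subseteq F_i^c$), while if $e_0 \notin F_i$ then $u_{F_i}*u_{F_k} = -|F_i \cap F_k^c| = 0$ (since $F_i \subseteq F_k$); either way, $w * u_{F_i}$ equals the required value $z^\alpha_{F_i}$. The $z^\beta$ calculation is symmetric with $F_1$ playing the role of $F_k$: the inclusions $F_1 \subseteq F_i$ give $|F_1 \cap F_i| = |F_1|$ and $F_1 \cap F_i^c = \emptyset$, producing the correct values. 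The two $w = 0$ cases are trivial because every $z^\alpha_{F_i}$ (respectively $z^\beta_{F_i}$) vanishes in that regime.

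I do not anticipate any serious obstacle: the content is really just guessing the right $w$ and then running a short case analysis driven by whether $e_0$ belongs to the largest (respectively smallest) flat in the flag. It is worth noting that the argument produces $w$-vectors on a proper face of each cone rather than in its relative interior, which is consistent with $z^\alpha$ and $z^\beta$ being pseudocubical but generally not cubical.
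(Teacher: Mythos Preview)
Your proposal is correct and follows essentially the same approach as the paper: you guess the same explicit $w$-vectors (a nonnegative multiple of $u_{F_k}$ for $z^\alpha$ and of $u_{F_1}$ for $z^\beta$, or zero) and verify the defining equations via the same inner-product case analysis. If anything, your write-up is slightly more explicit about the intersection computations than the paper's.
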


\begin{proof}
We must argue that $w_{\sigma,*}(z^\alpha),w_{\sigma,*}(z^\beta)\in\sigma$ for every cone $\sigma\in\Sigma_\sM$. Consider a flag $\cF=(F_1\subsetneq\dots\subsetneq F_k)$ corresponding to a cone $\sigma_\cF\in\Sigma_{\sM}$. It suffices to prove that
\begin{equation}\label{wvector1}
w_{\sigma_\cF,*}(z^\alpha)=
\begin{cases}
\frac{1}{|F_k^c|}u_{F_k} & e_0\in F_k\\
0 & e_0\notin F_k,
\end{cases}
\end{equation}
and
\begin{equation}\label{wvector2}
w_{\sigma_\cF,*}(z^\beta)=
\begin{cases}
\frac{1}{|F_1|}u_{F_1} & e_0\notin F_1\\
0 & e_0\in F_1,
\end{cases}
\end{equation}
We verify \eqref{wvector1}; the verification \eqref{wvector2} is similar. 

To verify \eqref{wvector1}, first suppose that $e_0\in F_k$. Then for any $j=1,\dots,k$, it follows from the definition of $*$ that
\[
u_{F_k}*u_{F_j}=
\begin{cases}
|F_k^c| & e_0\in F_j,\\
0 & e_0\notin F_j.
\end{cases}
\]
Using this, we verify that $\frac{1}{|F_k^c|}u_{F_k}$ satisfies the defining equations of $w_{\sigma_\cF,*}(z^\alpha)$:
\[
\frac{1}{|F_k^c|}u_{F_k}*u_{F_j}=z_{F_j}^\alpha\;\;\;\text{ for all }\;\;\;j=1,\dots,k.
\]
Now suppose that $e_0\notin F_k$. Then $e_0\notin F_j$ for any $j=1,\dots,k$, so $z_{F_j}^\alpha=0$. Thus, the defining equation for $w_{\sigma_\cF,*}(z^\alpha)$ become
\[
w_{\sigma_\cF,*}(z^\alpha)*u_{F_j}=0\;\;\;\text{ for all }\;\;\;j=1,\dots,k,
\]
showing that $w_{\sigma_\cF,*}(z^\alpha)=0$.
\end{proof}

It follows from Theorem~\ref{thm:mvol=mdeg} that the coefficients of the reduced characteristic polynomial have a volume-theoretic interpretation:
\[
\overline\mu^a(\sM)=\MVol_{\Sigma_\sM,*}(\underbrace{z^\alpha,\dots,z^\alpha}_{d-a},\underbrace{z^\beta,\dots,z^\beta}_{a}).
\]
By \cite[Proposition~7.4]{NR}, we know that $\Cub(\Sigma_\sM,*)\neq\emptyset$, and since the cubical cone is the interior of the pseudocubical cone, we may approximate $z^\alpha,z^\beta\in\oCub(\Sigma_\sM,*)$ with $z_t^\alpha,z_t^\beta\in\Cub(\Sigma_\sM,*)$ such that
\[
\lim_{t\rightarrow 0}z_t^\alpha=z^\alpha\;\;\;\text{ and }\;\;\;\lim_{t\rightarrow 0}z_t^\beta=z^\beta.
\]
Define
\[
\overline\mu_t^a(\sM)=\MVol_{\Sigma_\sM,*}(\underbrace{z_t^\alpha,\dots,z_t^\alpha}_{d-a},\underbrace{z_t^\beta,\dots,z_t^\beta}_{a}).
\]
By Theorem~\ref{thm:bergman}, we know that $(\Sigma_\sM,*)$ is AF, and the AF inequalities applied to the mixed volumes $\overline\mu_t^a(\sM)$ imply that the sequence $\overline\mu_t^0(\sM),\dots,\overline\mu_t^d(\sM)$ is log-concave. Since mixed volumes of cubical values are positive (Proposition~\ref{prop:positive}), and since all log-concave sequences of positive values are unimodal, we see that the sequence $\overline\mu_t^0(\sM),\dots,\overline\mu_t^d(\sM)$ is also unimodal. Since both unimodality and log-concavity are preserved under limits, we conclude that
\[
\overline\mu^0(\sM),\dots,\overline\mu^d(\sM)
\]
is unimodal and log-concave, verifying the Heron--Rota--Welsh Conjecture.

\bibliographystyle{alpha}
\newcommand{\etalchar}[1]{$^{#1}$}

\end{document}